\documentclass[]{article}
\usepackage{amscd}
\usepackage{amsmath}
\usepackage{latexsym}
\usepackage{amsfonts}
\usepackage{amssymb}
\usepackage{amsthm}
\usepackage{bm}
\usepackage{graphicx}
\usepackage{color,cite}

 \oddsidemargin .5cm \evensidemargin .5cm
\marginparwidth 40pt \marginparsep 10pt \topmargin 0.5cm
 \headsep1pt
 \headheight 0pt
 \textheight 8.5in
 \textwidth 5.8in
 \sloppy

\newtheorem{proposition}{Proposition}[section]

\newtheorem{lemma}{Lemma}[section]
\newtheorem{theorem}{Theorem}[section]
\newtheorem{corollary}{Corollary}[section]

\DeclareMathOperator*{\esssup}{ess\; sup}

\def\R{\Bbb R}

\newcommand{\beq}{\begin{equation}}
\newcommand{\eeq}{\end{equation}}
\newcommand{\ben}{\begin{eqnarray}}
\newcommand{\een}{\end{eqnarray}}
\newcommand{\beno}{\begin{eqnarray*}}
\newcommand{\eeno}{\end{eqnarray*}}

\theoremstyle{remark}
\newtheorem{remark}{Remark}[section]

\begin{document}
\title{Partial regularity to the Landau-Lifshitz equation with spin accumulation}

\author{
\textsc{Xueke Pu} \\[1ex] 
\normalsize School of mathematics and information sciences\\
\normalsize Guangzhou University, Guangzhou 510006, China\\ 
\normalsize {puxueke@gmail.com} \\
\and 
\textsc{Wendong Wang} \\[1ex] 
\normalsize School of  Mathematical Sciences \\ 
\normalsize Dalian University of Technology, Dalian 116024, China \\
\normalsize\&Mathematical Institute, University of Oxford, Oxford OX2 6GG, UK\\
\normalsize {wendong@dult.edu.cn} 
}

\date{}

\maketitle
\begin{abstract}
In this paper, we consider a model for the spin-magnetization system that takes into account the diffusion process of the spin accumulation. This model consists of the Landau-Lifshitz equation describing the precession of the magnetization, coupled with a quasi-linear parabolic equation describing the diffusion of the spin accumulation. This paper establishes the global existence and uniqueness of weak solutions for large initial data in $\Bbb R^2$. Moreover, partial regularity is shown. In particular, the solution is regular on $\Bbb R^2\times(0,\infty)$ with the exception of at most finite singular points.
\end{abstract}
\begin{center}
 \begin{minipage}{120mm}
   { \small {\bf AMS Subject Classification:} 35B65, 35Q60, 58J35}
\end{minipage}
\end{center}
\begin{center}
 \begin{minipage}{120mm}
   { \small {{\bf Key Words:} Landau-Lifshitz equation, spin accumulation, partial regularity}
         }
\end{minipage}
\end{center}

\section{Introduction}
\setcounter{section}{1}\setcounter{equation}{0}
In this paper, we consider the following coupled system modeling the spin-magnetization in ferromagnetic multilayers, where the diffusion process of the spin accumulation through the multilayers is taken into account. The spin accumulation $\bf s$ is described by a system of quasilinear diffusion equations and the precession of the magnetization $\bf m$ is described by the Landau-Lifshitz equation. The coupled system is given by
\begin{align}\label{equ1}
\begin{cases}
{\partial_t {\bf s}} =-{\text{div}}{\bf J}_{\bf s}-D_0(x){\bf s}-D_0(x){\bf s}\times {\bf m}\\
{\partial_t {\bf m}}=-{\bf m}\times({\bf h+s})+\alpha
{\bf m}\times {\partial _t{\bf m}},
\end{cases}
\end{align}
where ${\bf s}=(s_1,s_2,s_3)\in \Bbb R^3$ is the spin accumulation, ${\bf m}=(m_1,m_2,m_3)\in\Bbb S^2$ is the precession of the magnetization, and ${\bf J_s}$ is the spin current given by
\begin{equation*}
{\bf J}_{\bf s}={\bf m}\otimes {\bf J}_e-D_0({\bf x})\left[\nabla{\bf s}-\beta{\bf m}\otimes(\nabla{\bf s}\cdot{\bf m})\right],
\end{equation*}
where ${\bf J}_e$ is the applied electric current, and the local field ${\bf h}$ can be derived from the Landau-Lifshitz energy
$$\mathcal E({\bf m})=\int\Phi({\bf m})+\frac12\int|\nabla{\bf m}|^2-\frac12\int {\bf h}_d\cdot{\bf m},$$
by
$${\bf h}=-\frac{\delta \mathcal E({\bf m})}{\delta{\bf m}}=-\nabla_{\bf m}\Phi+\Delta{\bf m}+{\bf h}_d.$$
In the above system, $D_0(x)>0$ is the diffusion coefficient of the spin accumulation which is assumed to be a measurable function bounded from above and below, $0<\beta<1$ is the spin polarization parameter, $\alpha>0$ is the Gilbert damping parameter and the term $\alpha{\bf m}\times \partial_t{\bf m}$ is usually referred to as the Gilbert damping. The additional term in the LLG equation corresponds to the interaction $F_s[{\bf s,m}]=-\int{\bf m}\cdot{\bf s}dx$. For more physics background, the interested readers may refer to \cite{GW07,LSZ03,ZLF02} for more details.

To get rid of unimportant factors for the study in this paper, we set ${\bf J}_e\equiv 0$, $D_0(x)\equiv1$, and only keep ${\bf h}=\Delta{\bf m}$ is the magnetization field. These simplification will not influence the results of this paper substantially, but will do simplify the presentation of this paper significantly. In this paper, we will concentrate on the two dimensional case, i.e., we let $x\in\Bbb R^2$ and $t\in \Bbb R^+$, and regard $({\bf s,m})\in\Bbb R^3\times\Bbb S^2$ as functions of $(x,t)\in\Bbb R^2\times\Bbb R^+$, and leave the three dimensional case in a forthcoming paper, since they are handled differently.

The equation for the spin accumulation $\bf s$ in \eqref{equ1} can then be rewritten as
\begin{equation}\label{equ9}
\partial_t{\bf s}-\text{div}\left({\bf A}({\bf m})\nabla {\bf s}\right)+{\bf s}+{\bf s}\times {\bf m}=0,
\end{equation}
where the coefficient of the principal part depends on the magnetization field ${\bf m}$ by
\begin{equation}
{\bf A(m)}=\begin{pmatrix} 1-\beta m_1^2&-\beta m_1m_2&-\beta
m_1m_3\\-\beta m_2m_1&1-\beta m_2^2&-\beta m_2m_3\\
-\beta m_3m_1&-\beta m_3m_2&1-\beta m_3^2
\end{pmatrix}.
\end{equation}
Since $0<\beta<1$ and $|{\bf m}|\equiv1$, ${\bf A(m)}$ is strictly positively
definite with
\begin{equation}\label{equ1.4}
(1-\beta)|{\bm\xi}|^2\leq{\bm \xi}^T{\bf A(m)}{\bm \xi}\leq |{\bm \xi}|^2
\end{equation}
and equation \eqref{equ9} is strongly parabolic. On the other hand, since $|{\bf m}|=1$, the second equation of \eqref{equ1} can also be rewritten in the following two equivalent forms
\begin{equation}\label{equ5}
(1+\alpha^2)\frac{\partial {\bf m}}{\partial t}=-{\bf m}\times(\Delta{\bf m+s})-\alpha{\bf m}\times({\bf m}\times(\Delta {\bf m+s}))
\end{equation}
or
\begin{equation}\label{equ6}
(1+\alpha^2)\frac{\partial {\bf m}}{\partial t}-\alpha\Delta{\bf m}=\alpha|\nabla {\bf m}|^2{\bf m}-{\bf m}\times(\Delta {\bf m+s})-\alpha {\bf m\times(m\times s)}.
\end{equation}

When the spin accumulation ${\bf s}$ is not considered, the system \eqref{equ1} reduces to the Landau-Lifshitz equation, which is a fundamental equation describing the evolution of ferromagnetic spin chain and was proposed on the phenomenological ground in studying the dispersive theory of magnetization of ferromagnets in 1935 by Landau and Lifshitz \cite{LL35}. An equivalent form of the Landau-Lifshitz equation was proposed by Gilbert in 1955 \cite{Gilbert55}, and $\alpha$ is called the Gilbert damping coefficient. Hence the Landau-Lifshitz equation is also called the Landau-Lifshitz-Gilbert (LLG) equation in the literature.

The Landau-Lifshitz equation is interesting in both mathematics and physics, not only because it is closely related to the famous heat flow of harmonic maps (formally when the Gilbert damping parameter $\alpha\to\infty$) \cite{Struwe85,Struwe2,Feldman,Evans,Chen,CLL95} and to the Schr\"odinger flow on the sphere (when the Gilbert damping parameter $\alpha\to0$) \cite{BIKT11,KLPS10,DW01}, but also because it has concrete physics background in the study of the magnetization in ferromagnets. In recent years, there has been lots of interesting studies for the Landau-Lifshitz equation, concerning its existence, uniqueness and regularities of various kinds of solutions. In the sequel, we list only a few of the literature that are closely related to our work in the present paper.

For the Landau-Lifshitz equation on two dimensional compact manifold $\mathcal M$ without boundary, Guo and Hong \cite{GH93} proved global existence and uniqueness of smooth solutions under small energy assumptions. Note that in the 2D case, the Landau-Lifshitz equation is energy critical. Furthermore, they showed the partial regularity of weak solutions, in the spirit of the Struwe's treatment of the heat flow of harmonic maps on two dimensional compact manifold without boundary \cite{Struwe85}. They showed that for any initial data in $H^1$, there exists a unique solution that is regular with exception of finitely many singular points on $\mathcal M\times (0,\infty)$. Global existence of weak solutions in $3D$ was also considered in their paper by Ginzburg-Landau approximation. In $\Bbb R^3$, Alouges and Soyeur proved the existence of weak solutions by Ginzburg-Landau approximation for the Landau-Lifshitz-Gilbert equation in the paper \cite{AS92}, where nonuniqueness is also shown.

In $\Bbb R^3$, the Landau-Lifshitz equation becomes energy supercritical, and therefore uniqueness and regularity problems become more delicate.  Global existence of classical solutions with small initial data was obtained by Melcher \cite{Melcher12} by deriving a covariant Ginzburg-Landau equation and using the Coulomb gauge, inspired by recent developments in the context of Schr\"odinger maps \cite{BIKT11}.  We also note that in the one dimensional case, the global existence of classical solutions to the Landau-Lifshitz equation without Gilbert damping (i.e. the one dimensional schr\"odinger maps flow) for any smooth initial data was obtained the seminal paper \cite{ZGT91}, where the moving frame method was introduced for the first time to study the Landau-Lifshitz equation.

For regularity problems for the Landau-Lifshitz equation in higher dimensions,  Moser \cite{Moser} showed that the weak solutions of the Landau-Lifshitz equation of the ferromagnetic spin chain are smooth in an open set with complement of vanishing $d$-dimensional Hausdorff measure respect to the parabolic metric in $\Bbb R^d$ for $d\leq 4$, when the solution is stationary, in the spirit of Feldman's result \cite{Feldman} for stationary weak solutions of the heat flow of harmonic maps. Slightly later, Liu \cite{Liu2004} studied the partial regularity of stationary weak solutions for the Landau-Lifshitz equation, by obtaining a generalized monotonicity inequality. Melcher \cite{Melcher05} established the existence of partially regular weak solutions for the Landau-Lifshitz equation in $\Bbb R^3$ without stationary assumptions, based on the Ginzburg-Landau approximation with trilinear estimates. Wang \cite{Wang06} also studied the partial regularity of the Landau-Lifshitz equation, obtaining the existence of a global weak solution for smooth initial data, which is smooth off a set with locally finite $d$-dimensional parabolic Hausdorff measure for $d\leq4$. Meaningwhile, Ding and Wang \cite{DW07} studied the finite time singularity of the Landau-Lifshitz equation in dimensions three and four, for suitably chosen initial data. Other regularity or blow up results to the Landau-Lifshitz-Maxwell equations were studied in \cite{DLW09,DingGuo}, to list only a few.

However, for the spin-magnetization system \eqref{equ1} that takes into account the diffusion process of the accumulation, there are few mathematical studies in the literature. The first mathematical result is due to Garc\'{\i}a-Cervera and Wang \cite{GW07}, who firstly studied such a coupled system and obtained global existence of global weak solutions in a 3D bounded domain. Nonuniqueness was also discussed in their paper.  Global existence and uniqueness of smooth solutions in 2D when the initial data is small \cite{GPeJDE} and in 1D for any smooth initial data were studied in \cite{PuGuo10}. But we don't know whether the weak solutions in 2D are regular when the initial data is not small. In this paper, we show that the weak solutions are indeed unique and regular with the exception of finitely many points in $\Bbb R^2\times(0,\infty)$ for any initial data $({\bf s}_0, {\bf m}_0)\in L^2(\Bbb R^2)\times H_{\bf a}^1(\Bbb R^2)$. See precise statement of the results in Theorem \ref{thm} below. Similar result can be generalized to the periodic case. The partial regularity result in $\Bbb R^3$ and global existence of small solutions under smallness conditions will be presented in forthcoming papers.

For a given constant vector ${\bf a}\in \Bbb S^2$ and a positive integer $k$, we define
$$H_{\bf a}^k(\Bbb R^2,\Bbb S^2)=\{{\bf m}: {\bf m-a}\in H^k(\Bbb R^2,\Bbb S^2),|{\bf m}|=1, a.e., {\rm in}\ \Bbb R^2\}.$$
Then our main results are stated as follows:
\begin{theorem}\label{thm}
Assume that the initial data ${\bf s}_0\in L^2(\Bbb R^2;\Bbb R^3)$ and ${\bf m}_0\in H_{\bf a}^1(\Bbb R^2;\Bbb S^2)$. Then there exists a unique global weak solution $(\bf s,m)$ of the system \eqref{equ1} which is smooth in $\Bbb R^2\times((0,\infty)\backslash\{T_i\}_{i=1}^L)$ with a finite number of singular points $(x_i^l,T_i)$, $1\leq l\leq L_i$. Moreover, there are two constants $\varepsilon_0>0$ and $R_0>0$ such that each singular point $(x_i^l,T_i)$ is characterized by
$$\limsup_{t\uparrow T_i}\int_{B_{R}(x_i^l)}|\nabla{\bf m}(\cdot,t)|^2dx>\varepsilon_0$$
for any $0<R\leq R_0$.
\end{theorem}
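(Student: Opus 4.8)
The plan is to adapt Struwe's parabolic regularity theory for the two-dimensional harmonic map heat flow, in the form used by Guo--Hong \cite{GH93} for the Landau--Lifshitz equation, to the coupled system, treating the spin accumulation ${\bf s}$ as a controllable forcing on the magnetization equation. First I would produce local smooth solutions by Ginzburg--Landau penalization: relax the constraint $|{\bf m}|=1$ by adding $-\tfrac{1}{\epsilon^2}(|{\bf m}|^2-1){\bf m}$ to the right-hand side of \eqref{equ6} and couple it with the quasilinear ${\bf s}$-equation \eqref{equ9}, which is uniformly parabolic by \eqref{equ1.4}. Standard fixed-point (or Galerkin) arguments give approximate solutions, and the uniform energy bounds below let me pass to the limit $\epsilon\to0$ to obtain a solution that is smooth on a short time interval and satisfies $|{\bf m}|\equiv1$.

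The second step is the global energy estimate. Testing \eqref{equ9} with ${\bf s}$ and using \eqref{equ1.4} together with $({\bf s}\times{\bf m})\cdot{\bf s}=0$ gives
\begin{equation*}
\frac{d}{dt}\,\tfrac12\!\int_{\R^2}\!|{\bf s}|^2 + (1-\beta)\!\int_{\R^2}\!|\nabla{\bf s}|^2 + \int_{\R^2}\!|{\bf s}|^2 \le 0 ,
\end{equation*}
so that $\|{\bf s}(t)\|_{L^2}^2\le e^{-2t}\|{\bf s}_0\|_{L^2}^2$ and ${\bf s}\in L^2(\R^+;H^1)$. Writing ${\bf f}=\Delta{\bf m}+{\bf s}$ and pairing \eqref{equ5} against ${\bf f}$, the algebraic identities $({\bf m}\times{\bf f})\cdot{\bf f}=0$ and $|{\bf m}\times({\bf m}\times{\bf f})|=|{\bf m}\times{\bf f}|$ yield $(1+\alpha^2)|\partial_t{\bf m}|^2=|{\bf m}\times{\bf f}|^2$ and the magnetic energy law
\begin{equation*}
\frac{d}{dt}\,\tfrac12\!\int_{\R^2}\!|\nabla{\bf m}|^2 + \tfrac{\alpha}{2}\!\int_{\R^2}\!|\partial_t{\bf m}|^2 \le \frac{1}{2\alpha}\!\int_{\R^2}\!|{\bf s}|^2 .
\end{equation*}
Since the right-hand side is integrable in time, this furnishes the global bounds ${\bf m}\in L^\infty(\R^+;H^1_{\bf a})$, $\partial_t{\bf m}\in L^2(\R^+;L^2)$, and a total energy $E_0$ controlled by the data. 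Localizing both computations against a cutoff $\phi^2$ produces the local energy inequality on parabolic cylinders that drives the concentration analysis.

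The heart of the argument, and the step I expect to be hardest, is the $\varepsilon$-regularity lemma: there exist $\varepsilon_0,R_0>0$ such that whenever $\sup_x\int_{B_R(x)}|\nabla{\bf m}(\cdot,t)|^2\,dx\le\varepsilon_0$ on a parabolic cylinder of radius $R\le R_0$, the solution is smooth there with bounds on all derivatives on a smaller cylinder. The difficulty is that the Dirichlet energy is scale-invariant (critical) in two dimensions, so the nonlinearities in \eqref{equ6} --- the term $\alpha|\nabla{\bf m}|^2{\bf m}$ and, after writing ${\bf m}\times\Delta{\bf m}=\text{div}({\bf m}\times\nabla{\bf m})$, the divergence-form term --- are exactly borderline. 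I would control them using the local energy inequality and the two-dimensional Ladyzhenskaya/Gagliardo--Nirenberg inequality $\|u\|_{L^4}^2\le C\|u\|_{L^2}\|\nabla u\|_{L^2}$, so that the small-energy hypothesis makes the critical quantities absorbable, followed by a parabolic bootstrap. The new coupling terms ${\bf m}\times{\bf s}$ and ${\bf m}\times({\bf m}\times{\bf s})$ are of lower order relative to this scaling; using that ${\bf s}$ solves the strongly parabolic equation \eqref{equ9} with coefficients ${\bf A}({\bf m})$ inheriting the regularity of ${\bf m}$, interior $L^p$ and Schauder theory upgrade ${\bf s}$ to be locally bounded and H\"older wherever ${\bf m}$ is regular, closing the coupled bootstrap.

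Finally I would assemble the global picture. Let $T$ be a maximal smoothness time; the local energy inequality shows the solution extends past $T$ unless energy concentrates, i.e. unless $\limsup_{t\uparrow T}\int_{B_R(x)}|\nabla{\bf m}|^2\,dx>\varepsilon_0$ at some $x$ for all small $R$. By the global bound $E_0$ there are at most $E_0/\varepsilon_0$ such points, so each singular time $T_i$ carries finitely many spatial singularities $x_i^l$, which is exactly the characterization in the statement. Taking the weak $H^1$ limit ${\bf m}(\cdot,T_i)$ together with ${\bf s}(\cdot,T_i)\in L^2$ as new initial data --- the magnetic energy having dropped by at least $\varepsilon_0$ per concentration point --- and restarting, the quasi-monotonicity of the energy forces only finitely many singular times; between them the solution is smooth, and ${\bf s}$ is smooth there by parabolic regularity. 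Uniqueness follows from a Gronwall-type stability estimate for the difference of two finite-energy weak solutions satisfying the energy inequality, exploiting the strong parabolicity of \eqref{equ9} and the damping $-\alpha\Delta{\bf m}$ in \eqref{equ6}, with the critical difference terms again absorbed using the energy smallness away from the finitely many singular points.
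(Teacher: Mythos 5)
Your existence scheme is sound and close in spirit to the paper's: the global and localized energy laws you derive are exactly Lemma \ref{lem:global energy inequality} and Lemma \ref{lem: local monotonicity inequality}, your $\varepsilon$-regularity step via the Ladyzhenskaya-type inequality on small balls is Lemma \ref{lem:struwe's inequality} together with Lemmas \ref{lem:nabla 2 estimate-m}--\ref{lem:nabla 2 spcae} and Corollary \ref{cor:higher regularity}, and the concentration/restart argument with at most $E_0/\varepsilon_0$ singular points is the paper's Section 3. The one structural difference on the existence side is your Ginzburg--Landau penalization: the paper instead approximates the data by smooth maps and invokes a local well-posedness theorem (Theorem \ref{thm2}, proved by mollification) to launch strong solutions, which avoids the extra work of proving uniform $\varepsilon$-regularity for the penalized system and of passing to the limit $\epsilon\to0$ while preserving $|{\bf m}|=1$; your route is viable (it is the Guo--Hong/Alouges--Soyeur approach) but strictly more laborious here, since the coupling ${\bf A}({\bf m})$ must stay uniformly elliptic along the penalized flow.

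The genuine gap is your last step, uniqueness. You assert a Gronwall-type $L^2$ stability estimate for the difference of two energy-class weak solutions, with ``critical difference terms absorbed using the energy smallness away from the finitely many singular points.'' This does not close near $t=0$, where the solutions are \emph{not} smooth and only the Struwe-class bounds are available: writing ${\bf s}={\bf s}_1-{\bf s}_2$, ${\bf m}={\bf m}_1-{\bf m}_2$, the difference equation for ${\bf s}$ contains $\mathrm{div}\left(({\bf A}({\bf m}_1)-{\bf A}({\bf m}_2))\nabla{\bf s}_2\right)$, and testing with ${\bf s}$ produces $\int|{\bf m}||\nabla{\bf s}_2||\nabla{\bf s}|$. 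To absorb $\|\nabla{\bf s}\|_2$ and $\|\Delta{\bf m}\|_2$ one is forced into Gronwall factors like $\|\nabla{\bf s}_2(t)\|_{L^2}^4$, which are \emph{not} integrable in time near $t=0$ (the energy class only gives $\nabla{\bf s}_2\in L^2_tL^2_x$); moreover there is no $L^\infty$ bound whatsoever on ${\bf s}_2$, since ${\bf s}_0$ is merely $L^2$ --- this is precisely the difficulty the authors flag in the introduction. The paper's Section 4 resolves it by measuring the difference in the weaker norms $\|{\bf s}\|_{B^{-\beta}_{2,\infty}}^2+\|{\bf m}\|_{B^{1-\beta}_{2,\infty}}^2$, $\beta\in(0,1/2)$, using Bony decomposition, the product estimates of Lemma \ref{lem:product2-1}--\ref{lem:product3-1} and the commutator estimate of Lemma \ref{commutator}, so that the Gronwall weight is only $\bar h(t)=1+\|({\bf s}_i,\nabla{\bf m}_i)\|_4^4+\|\partial_t{\bf m}_i\|_2^2+\|({\bf s}_i,\nabla{\bf m}_i)\|_{H^1}^2\in L^1(0,T_1-\theta)$, which \emph{is} available in the Struwe class. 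Without some such loss-of-regularity device (negative-order Besov or comparable), your uniqueness step fails; note that even for the pure harmonic map heat flow, uniqueness in the energy class is delicate and requires the nonincreasing-energy condition, so one cannot simply cite ``standard stability.'' The rest of your outline stands, but this step needs the Section 4 machinery or an equivalent substitute.
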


The strategy basically follows the seminal work of Struwe for the heat flow of harmonic maps. But there are something new in this paper. First, the Sobolev spaces that the components of the solutions lie in have different regularity for the magnetization field $\bf m$ and for the spin polarization field $\bf s$. From Theorem \ref{thm}, we can see that we only require ${\bf m}_0\in H_{\bf a}^1(\Bbb R^2;\Bbb S^2)$ and ${\bf s}_0\in L^2(\Bbb R^2;\Bbb R^3)$, and the regularity of $\bf s$ is very low. The main difficulty caused by this fact is that we don't have any $L^{\infty}$-estimates of the spin polarization $\bf s$, different from that of the magnetization $\bf m\in\Bbb S^2$, whose $L^{\infty}$-estimate is obvious. The inherent structure restricts us from copying/mimicking the arguments of any presenting literature. Secondly, with such a low regularity, the uniqueness of weak solutions becomes a real problem. In this paper, we prove the uniqueness under the help of Littlewood-Paley theory and the techniques of Besov spaces, presented in Section 3.

This paper is organized as below. In the next section, we give some \emph{a priori} estimates. In Section 3 and 4, we show existence and uniqueness of the weak solutions and finally in Section 5, we prove a local well-posedness result. Throughout this article, $C$ denotes a constant depending on $\alpha$ or $\beta$, which may be different from line to line.

\setcounter{equation}{0}
\section{\emph{A priori} Estimates}

In this section, we show some \emph{a priori} estimates for the system \eqref{equ1}. As in \cite{Struwe85}, we introduce the following Sobolev spaces. For $0\leq \tau<T$, let
\begin{align}
V({\tau},T):=&\Bigg\{{\bf m}:\Bbb R^2\times[\tau,T]\rightarrow\Bbb S^2|~~{\bf m}\in H_{\bf a}^1(\Bbb R^2,\Bbb S^2)\ for\ a.e.\ t\in [\tau,T],\nonumber\\
&\esssup_{\tau\leq t\leq T}\int_{\Bbb R^2}|\nabla
{\bf m}(\cdot,t)|^2dx+\int_{\tau}^T\int_{\Bbb R^2}|\nabla^2{\bf m}|^2+|\partial_t{\bf m}|^2dxdt<\infty{\Bigg\}},
\end{align}
and
\begin{align}
W({\tau},T):=&{\Bigg\{}{\bf s}: \Bbb R^2\times[\tau,T]\rightarrow\Bbb R^3|~~{\bf s}\ is\ measurable,\nonumber\\
&\esssup_{\tau\leq t\leq T}\int_{\Bbb R^2}|{\bf s}(\cdot,t)|^2dx+\int_{\tau}^T\int_{\Bbb R^2}|\nabla {\bf s}|^2dxdt<\infty{\Bigg\}}.
\end{align}
By the same proof as in Lemma 3.1 of \cite{Struwe85}, we have
\begin{lemma}\label{lem:struwe's inequality}
There exist some absolute constants $C,\ R_0>0$ such that for any function ${\bf f}$ in $W(0,T)$, and any $R\in (0,R_0]$ the following estimate holds
\begin{align}\label{equ7}
\int_{\Bbb R^2\times[0,T]}|{\bf f}|^4dxdt\leq & C\cdot\esssup_{0<t<T}\int_{B_R(x)}|{\bf f}(\cdot,t)|^2dx\nonumber\\
&\cdot{\Bigg(}\int_0^T\int_{\Bbb R^2}|\nabla{\bf f}|^2dxdt+R^{-2}\int_0^T\int_{\Bbb R^2}|{\bf f}|^2dxdt{\Bigg)}.
\end{align}
\end{lemma}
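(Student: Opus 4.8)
The plan is to deduce this global space-time estimate from a scale-invariant localized Ladyzhenskaya-type inequality on a single ball, and then to patch the local estimates together using a bounded-overlap covering of $\Bbb R^2$. Throughout, I read the essential supremum on the right-hand side as $\esssup_{0<t<T}\sup_{x\in\Bbb R^2}\int_{B_R(x)}|{\bf f}(\cdot,t)|^2dx$, i.e. the supremum is also taken over all ball centers $x$; this is the quantity the covering argument naturally produces.

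First I would establish, for every ball $B_R(x_0)\subset\Bbb R^2$ and every slice ${\bf f}(\cdot,t)\in H^1(B_R(x_0))$, the scaled local inequality
$$\int_{B_R(x_0)}|{\bf f}|^4dx\leq C\left(\int_{B_R(x_0)}|{\bf f}|^2dx\right)\left(\int_{B_R(x_0)}|\nabla{\bf f}|^2dx+R^{-2}\int_{B_R(x_0)}|{\bf f}|^2dx\right)$$
with an absolute constant $C$. This follows from the standard two-dimensional Gagliardo--Nirenberg inequality $\|g\|_{L^4(B_1)}^4\leq C\|g\|_{L^2(B_1)}^2\|g\|_{H^1(B_1)}^2$ on the unit ball by rescaling: setting $g(y)={\bf f}(x_0+Ry)$ for $y\in B_1$, one computes $\int_{B_1}|g|^4=R^{-2}\int_{B_R}|{\bf f}|^4$ and $\int_{B_1}|g|^2=R^{-2}\int_{B_R}|{\bf f}|^2$, while $\int_{B_1}|\nabla g|^2=\int_{B_R}|\nabla{\bf f}|^2$ because the Dirichlet energy is scale invariant in dimension two. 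Substituting these relations into the unit-ball inequality and clearing the surviving powers of $R$ reproduces exactly the displayed local estimate, and in particular explains the appearance of the $R^{-2}$ term.

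Next I would fix a lattice covering $\{B_R(x_j)\}_j$ of $\Bbb R^2$ by balls of radius $R$ with centers on a lattice of spacing $R$; this covers $\Bbb R^2$ and has overlap bounded by an absolute constant $N$ independent of $R$, since the overlap count is just the number of integer lattice points in a unit disk after rescaling. For fixed $t$, summing the local inequality over $j$, bounding the first factor $\int_{B_R(x_j)}|{\bf f}|^2$ by $\sup_x\int_{B_R(x)}|{\bf f}(\cdot,t)|^2=:A(t)$, and using the bounded overlap to estimate the sum of the remaining ball-integrals by $N$ times the integral over all of $\Bbb R^2$, gives
$$\int_{\Bbb R^2}|{\bf f}(\cdot,t)|^4dx\leq CN\,A(t)\left(\int_{\Bbb R^2}|\nabla{\bf f}(\cdot,t)|^2dx+R^{-2}\int_{\Bbb R^2}|{\bf f}(\cdot,t)|^2dx\right).$$
Integrating in $t\in[0,T]$ and then bounding $A(t)$ by its essential supremum, which is pulled outside the time integral, yields \eqref{equ7}.

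The only genuinely delicate point is the scaling bookkeeping in the first step: one must track the powers of $R$ so that the gradient term carries no factor of $R$ while the lower-order term carries $R^{-2}$, and verify that the Gagliardo--Nirenberg constant on $B_1$ is universal, in particular independent of the center $x_0$ by translation invariance. The covering and time-integration steps are then routine; the only thing to check there is that the overlap constant $N$ is independent of $R$, which is automatic because the covering is the rescaling of a fixed unit-scale lattice covering.
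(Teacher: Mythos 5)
Your proposal is correct and is essentially the argument the paper invokes by reference to Lemma 3.1 of Struwe \cite{Struwe85}: a scaled two-dimensional Ladyzhenskaya-type inequality on balls of radius $R$, summed over a bounded-overlap covering of $\Bbb R^2$ and then integrated in time, with the $\esssup$ understood as running over centers $x$ as well. The only cosmetic difference is that you obtain the local inequality from Gagliardo--Nirenberg on the unit ball via extension and rescaling, whereas Struwe multiplies by cutoff functions with $|\nabla\varphi|\leq C/R$ before applying the whole-space inequality; these are interchangeable, and your scaling bookkeeping (gradient term scale-invariant, lower-order term carrying $R^{-2}$) is exactly right.
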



For simplicity, we denote that
$$E_0=E^{\bf s}_0+\alpha E^{\bf m}_0,\quad E^{\bf s}_0=\int_{\Bbb R^2}|{\bf s}_0|^2dx,\quad E^{\bf m}_0=\int_{\Bbb R^2}|\nabla {\bf m}_0|^2dx, $$
 $$E_R(x_0,t)=E^{\bf s}_R(x_0,t)+ \alpha E^{\bf m}_R(x_0,t)=\int_{B_R({x_0})}|{\bf s}(x,t)|^2dx+\alpha \int_{B_R({x_0})}|\nabla {\bf m}(x,t)|^2dx,$$
$$E(t)=E^{\bf s}(t)+\alpha E^{\bf m}(t) =\int_{\Bbb R^2}|{\bf s}|^2(\cdot,t)+\alpha|\nabla {\bf m}|^2(\cdot,t)dx.$$

At first, we have the following basic energy type inequalities.
\begin{lemma}\label{lem:global energy inequality}
Assume that $({\bf s,m})\in W(0,T)\times V(0,T)$ is a solution of the system \eqref{equ1}. Then  there holds the following estimates
\ben
\sup_{0\leq t\leq
T}\int_{\Bbb R^2}|{\bf s}|^2(\cdot,t)dx+2(1-\beta)\int_0^T\int_{\Bbb R^2}|\nabla {\bf s}|^2dxdt+2\int_0^T\int_{\Bbb R^2}|{\bf s}|^2 dxdt\leq E_0^{\bf s},
\een
and
\ben\label{equ18}
&&\sup_{0\leq t\leq
T}\int_{\Bbb R^2}|{\bf s}|^2(\cdot,t)+\alpha|\nabla {\bf m}|^2(\cdot,t)dx\nonumber\\
 && +\int_0^T\int_{\Bbb R^2}\left(|{\bf s}|^2+2(1-\beta)|\nabla {\bf s}|^2 +\alpha^2|\partial_t{\bf m}|^2\right)dxdt \leq  \int_{\Bbb R^2}|{\bf s}_0|^2+\alpha|\nabla{\bf m}_0|^2dx,
\een
which is $E(t)\leq E_0$ for all $0<t\leq T.$
\end{lemma}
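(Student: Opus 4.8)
The plan is to obtain both inequalities from a pair of energy identities, one for each equation tested against its natural multiplier, and then to dispose of the single term that couples the two identities by Young's inequality. All manipulations below are formal but are justified by the regularity built into $W(0,T)\times V(0,T)$ (or, if one prefers, by performing them on a Galerkin/Ginzburg--Landau approximation and passing to the limit).

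First I would test the spin equation \eqref{equ9} with ${\bf s}$ and integrate over $\Bbb R^2$. The lower-order rotation term disappears pointwise because $({\bf s}\times{\bf m})\cdot{\bf s}=0$, an integration by parts turns the principal part into $\int_{\Bbb R^2}{\bf A(m)}\nabla{\bf s}:\nabla{\bf s}\,dx$, whose integrand is bounded below by $(1-\beta)|\nabla{\bf s}|^2$ upon applying \eqref{equ1.4} to each spatial-derivative column $\partial_j{\bf s}\in\Bbb R^3$. This yields
$$\frac12\frac{d}{dt}\int_{\Bbb R^2}|{\bf s}|^2\,dx+(1-\beta)\int_{\Bbb R^2}|\nabla{\bf s}|^2\,dx+\int_{\Bbb R^2}|{\bf s}|^2\,dx\le 0.$$
Multiplying by $2$ and integrating in time gives the first inequality: taking the supremum in $t$ of the (nonincreasing) energy together with the full-time integral of the dissipation bounds everything by $E_0^{\bf s}$.

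The second inequality rests on the pointwise identity $\partial_t{\bf m}\cdot(\Delta{\bf m}+{\bf s})=\alpha|\partial_t{\bf m}|^2$. I would read it off the Gilbert form $\partial_t{\bf m}=-{\bf m}\times{\bf F}+\alpha\,{\bf m}\times\partial_t{\bf m}$, with ${\bf F}:=\Delta{\bf m}+{\bf s}$: dotting with $\partial_t{\bf m}$ and using $({\bf m}\times\partial_t{\bf m})\cdot\partial_t{\bf m}=0$ gives $({\bf m}\times{\bf F})\cdot\partial_t{\bf m}=-|\partial_t{\bf m}|^2$, while dotting with ${\bf F}$ and invoking the scalar triple product identity $({\bf m}\times\partial_t{\bf m})\cdot{\bf F}=-({\bf m}\times{\bf F})\cdot\partial_t{\bf m}$ produces the claim. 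Integrating over $\Bbb R^2$, so that $\int\partial_t{\bf m}\cdot\Delta{\bf m}=-\tfrac12\frac{d}{dt}\int|\nabla{\bf m}|^2$, and multiplying by $\alpha$ gives
$$\frac{\alpha}{2}\frac{d}{dt}\int_{\Bbb R^2}|\nabla{\bf m}|^2\,dx+\alpha^2\int_{\Bbb R^2}|\partial_t{\bf m}|^2\,dx=\alpha\int_{\Bbb R^2}{\bf s}\cdot\partial_t{\bf m}\,dx.$$

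Finally I would add this to the spin identity. The only obstruction is the coupling term $\alpha\int_{\Bbb R^2}{\bf s}\cdot\partial_t{\bf m}\,dx$, which does not cancel (the rotation term in \eqref{equ9} contributed nothing when tested with ${\bf s}$, so there is no counterterm). The key point is that it can be \emph{absorbed}: Young's inequality gives $\alpha\int{\bf s}\cdot\partial_t{\bf m}\le\tfrac12\int|{\bf s}|^2+\tfrac{\alpha^2}{2}\int|\partial_t{\bf m}|^2$, and these are precisely half of the dissipation $\int|{\bf s}|^2+\alpha^2\int|\partial_t{\bf m}|^2$ already present on the left. After absorbing, using \eqref{equ1.4} once more, multiplying by $2$ and integrating in time, the halved coefficients are restored to $\int|{\bf s}|^2$, $2(1-\beta)\int|\nabla{\bf s}|^2$ and $\alpha^2\int|\partial_t{\bf m}|^2$, which is exactly \eqref{equ18}. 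The main thing to get right is this bookkeeping with the factor $2$; I would also stress that, reassuringly, no $L^\infty$ bound on ${\bf s}$ is needed, since every cross term is quadratic and is handled entirely in $L^2$.
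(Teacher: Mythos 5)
Your proposal is correct and follows essentially the same route as the paper: test the spin equation with ${\bf s}$ using the ellipticity bound \eqref{equ1.4}, derive the identity $\frac{\alpha}{2}\frac{d}{dt}\int|\nabla{\bf m}|^2+\alpha^2\int|\partial_t{\bf m}|^2=\alpha\int{\bf s}\cdot\partial_t{\bf m}$, absorb the coupling term by Young's inequality with exactly the same split, and add. The only cosmetic difference is that you obtain the ${\bf m}$-identity from the pointwise relation $\partial_t{\bf m}\cdot(\Delta{\bf m}+{\bf s})=\alpha|\partial_t{\bf m}|^2$ by dotting the Gilbert form with $\partial_t{\bf m}$ and with ${\bf F}$, whereas the paper tests the $(1+\alpha^2)$-form \eqref{equ5} with $\partial_t{\bf m}$ and combines it with \eqref{e2'} --- algebraically the same computation.
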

\begin{proof}
Multiplying equation \eqref{equ5} by $\partial_t{\bf m}$ and then integrating on $\Bbb R^2\times[0,T]$ yield that
\beno
&&(1+\alpha^2)\int_0^T\int_{\Bbb R^2}|\partial_t{\bf m}|^2dxdt \\
&=&-\int_0^T\int_{\Bbb R^2}{\bf m}\times(\Delta{\bf m+s})\cdot\partial_t{\bf m}~dxdt -\alpha\int_0^T\int_{\Bbb R^2}{\bf m}\times({\bf m}\times(\Delta{\bf m+s}))\cdot\partial_t{\bf m}~dxdt.
\eeno
Applying the vector cross product formula ${\bf a\times(b\times c)=(a\cdot c)b-(a\cdot b)c}$  and noticing that ${\bf m}\cdot\partial_t{\bf m}=0$, we have
\ben\label{e1}
&&(1+\alpha^2)\int_0^T\int_{\Bbb R^2}|\partial_t{\bf m}|^2dxdt+\frac{\alpha}{2}\int_0^T\int_{\Bbb R^2}\frac{d}{dt}|\nabla{\bf m}|^2dxdt\nonumber\\
&&=\alpha\int_0^T\int_{\Bbb R^2}{\bf s}\cdot\partial_t{\bf m}dxdt-\int_0^T\int_{\Bbb R^2}{\bf m}\times(\Delta{\bf m+s})\cdot\partial_t{\bf m}~dxdt.
\een
On the other hand, it follows from the second equation of $\eqref{equ1}_2$ that
\ben\label{e2'}
\int_0^T\int_{\Bbb R^2}|\partial_t{\bf m}|^2dxdt=-\int_0^T\int_{\Bbb R^2}\partial_t{\bf m}\cdot({\bf m}\times(\Delta{\bf m+s}))dxdt
\een
Thus using the H\"older inequality
$$\alpha\left|\int_0^T\int_{\Bbb R^2}{\bf s}\cdot\partial_t{\bf m}dxdt\right|\leq \frac{\alpha^2}{2}\int_0^T\int_{\Bbb R^2}|\partial_t{\bf m}|^2+\frac12\int_0^T\int_{\Bbb R^2}|{\bf s}|^2,$$
which combines \eqref{e1} and \eqref{e2'} implies that
\ben\label{e3}
{\alpha^2}\int_0^T\int_{\Bbb R^2}|\partial_t{\bf m}|^2dxdt+{\alpha}\int_0^T\int_{\Bbb R^2}\frac{d}{dt}|\nabla{\bf m}|^2dxdt\leq \int_0^T\int_{\Bbb R^2}|{\bf s}|^2dxdt.
\een

Furthermore, it follows from the equation of $s$ \eqref{equ9} that
\beno
\int_0^T\int_{\Bbb R^2}\frac{d}{dt}|{\bf s}|^2dxdt+2(1-\beta)\int_0^T\int_{\Bbb R^2}|\nabla {\bf s}|^2dxdt+2\int_0^T\int_{\Bbb R^2}|{\bf s}|^2 dxdt=0
\eeno
which and \eqref{e2'}
yield the required inequality.
\end{proof}

\begin{remark}\label{rem1}
Under the assumptions of Lemma \ref{lem:global energy inequality}, the estimate \eqref{equ7}
implies that
\begin{equation}\label{equ12}
\int_0^T\int_{\Bbb R^2}|{\bf s}|^4dxdt\leq C\cdot\esssup_{(x_0,t)\in\Bbb R^2\times[0,T]}E^{\bf s}_R(x_0,t)\left(\int_0^T\int_{\Bbb R^2}|\nabla{\bf s}|^2dxdt+TR^{-2}E_0\right),
\end{equation}
and
\begin{equation}\label{equ13}
\int_0^T\int_{\Bbb R^2}|\nabla{\bf m}|^4dxdt\leq C\cdot\esssup_{(x_0,t)\in\Bbb R^2\times[0,T]}E^{\bf m}_R(x_0,t)\left(\int_0^T\int_{\Bbb R^2}|\nabla^2{\bf m}|^2+TR^{-2}\alpha^{-1}E_0\right).
\end{equation}
\end{remark}

\begin{lemma}\label{lem:nabla 2 estimate-m}
Let $({\bf s,m})\in W(0,T)\times V(0,T)$ be a solution of the system (\ref{equ1}) with initial data $({\bf s}_0,{\bf m}_0)\in L^2(\mathbb{R}^2)\times H_{\bf a}^1(\mathbb{R}^2) $. There exist constants $\varepsilon>0$ and $R_0>0$ such that if
$$\esssup_{\tau\leq t\leq T,x_0\in\Bbb R^2}E^{\bf m}_R(x_0,t)<\varepsilon,$$
for any $R\in (0,R_0]$ and $0<\tau<T$, then we have
\begin{equation}
\int_{\Bbb R^2\times[\tau,T]}|\nabla^2{\bf m}|^2+|\nabla{\bf s}|^2dxdt\leq CE_0+C\varepsilon (T-\tau)R^{-2}E_0,
\end{equation}
and
\begin{equation}
\int_{\Bbb R^2\times[\tau,T]}
|\nabla {\bf m}|^4+|{\bf s}|^4dxdt<C\varepsilon (1+(T-\tau)R^{-2})E_0.
\end{equation}
\end{lemma}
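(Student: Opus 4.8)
The plan is to treat the two $L^2$-type bounds as the core of the argument and then read off the two $L^4$ bounds from Remark \ref{rem1}. The estimate for $\nabla{\bf s}$ is essentially free: the global energy inequality of Lemma \ref{lem:global energy inequality} already gives $2(1-\beta)\int_\tau^T\int_{\Bbb R^2}|\nabla{\bf s}|^2\,dxdt\le E_0^{\bf s}\le E_0$, so the only genuine work is the second-derivative estimate for ${\bf m}$. For this I would run a Struwe-type energy argument on the parabolic form \eqref{equ6} of the magnetization equation, testing against $-\Delta{\bf m}$.

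Concretely, multiply \eqref{equ6} by $-\Delta{\bf m}$ and integrate over $\Bbb R^2$. Integrating the time term by parts in space turns $-(1+\alpha^2)\int\partial_t{\bf m}\cdot\Delta{\bf m}$ into $\tfrac12(1+\alpha^2)\frac{d}{dt}\int|\nabla{\bf m}|^2$, while the diffusion term contributes $\alpha\int|\Delta{\bf m}|^2$. On the right-hand side I would use the constraint $|{\bf m}|\equiv1$, which yields ${\bf m}\cdot\nabla{\bf m}=0$ and hence ${\bf m}\cdot\Delta{\bf m}=-|\nabla{\bf m}|^2$, to rewrite the $\alpha|\nabla{\bf m}|^2{\bf m}$ term as $\alpha\int|\nabla{\bf m}|^4$; the leading gyroscopic term drops because ${\bf m}\times\Delta{\bf m}$ is orthogonal to $\Delta{\bf m}$; and the remaining terms involving ${\bf s}$ are each bounded pointwise by $|{\bf s}|\,|\Delta{\bf m}|$ since $|{\bf m}|=1$. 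This produces the differential inequality
\[
\tfrac12(1+\alpha^2)\frac{d}{dt}\int_{\Bbb R^2}|\nabla{\bf m}|^2\,dx + \alpha\int_{\Bbb R^2}|\Delta{\bf m}|^2\,dx \le \alpha\int_{\Bbb R^2}|\nabla{\bf m}|^4\,dx + C\int_{\Bbb R^2}|{\bf s}|\,|\Delta{\bf m}|\,dx,
\]
after which a Young inequality absorbs half of $\alpha\int|\Delta{\bf m}|^2$ at the cost of an extra $C\int|{\bf s}|^2$.

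I would then integrate in time over $[\tau,T]$ and invoke $\int|\nabla{\bf m}(\tau)|^2\le\alpha^{-1}E_0$ and $\int_\tau^T\int|{\bf s}|^2\le CE_0$ from Lemma \ref{lem:global energy inequality}. The decisive point is the quartic term: by \eqref{equ13} together with the hypothesis $\esssup E^{\bf m}_R<\varepsilon$ one has $\alpha\int_\tau^T\int|\nabla{\bf m}|^4\le C\varepsilon\big(\alpha\int_\tau^T\int|\nabla^2{\bf m}|^2 + (T-\tau)R^{-2}E_0\big)$. Since $\int|\Delta{\bf m}|^2=\int|\nabla^2{\bf m}|^2$ on $\Bbb R^2$, choosing $\varepsilon$ small enough that $C\varepsilon<\tfrac14$ lets me absorb $C\varepsilon\alpha\int|\nabla^2{\bf m}|^2$ into the left-hand side, giving $\int_\tau^T\int|\nabla^2{\bf m}|^2\le CE_0+C\varepsilon(T-\tau)R^{-2}E_0$; combined with the free $\nabla{\bf s}$ bound this is the first assertion. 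For the second assertion I would feed this bound back into \eqref{equ13} to obtain the $\varepsilon$-small estimate for $\int\int|\nabla{\bf m}|^4$, and use \eqref{equ12} for $\int\int|{\bf s}|^4$ after inserting the $L^2$ bound for $\nabla{\bf s}$.

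The main obstacle is precisely the feature flagged in the introduction: ${\bf s}$ carries no $L^\infty$ bound, so every ${\bf s}$-term must be estimated in $L^2$ or $L^4$ only, and the absorption has to be arranged so that the coercive term $\alpha\int|\Delta{\bf m}|^2$ survives both the quartic $|\nabla{\bf m}|^4$ term (needing $\varepsilon$ small) and the cross terms with ${\bf s}$ (needing a Young inequality with a small constant). A second delicate point is the $|{\bf s}|^4$ bound: the prefactor supplied by \eqref{equ12} is $\esssup E^{\bf s}_R$, which the global energy inequality controls only by $E_0^{\bf s}\le E_0$ rather than by $\varepsilon$; to reach the stated $\varepsilon$ factor one must also use smallness of the \emph{local} spin energy, which I would either incorporate through the combined small-energy hypothesis or extract from a localized energy estimate for the quasilinear equation \eqref{equ9}.
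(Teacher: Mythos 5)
Your proposal is correct and follows essentially the same route as the paper's proof: multiply \eqref{equ6} by $-\Delta{\bf m}$, use $({\bf m}\times\Delta{\bf m})\cdot\Delta{\bf m}=0$ and $|{\bf m}|=1$ to reduce the right-hand side to $C\int|\Delta{\bf m}||\nabla{\bf m}|^2+C\int|{\bf s}||\Delta{\bf m}|$, absorb half of $\alpha\int|\Delta{\bf m}|^2$ by Young, control the quartic term through \eqref{equ13} and the smallness hypothesis so that $C\varepsilon\int\int|\nabla^2{\bf m}|^2$ can be absorbed for $\varepsilon$ small (using $\int|\Delta{\bf m}|^2=\int|\nabla^2{\bf m}|^2$ on $\Bbb R^2$, which the paper uses implicitly), take the $\nabla{\bf s}$ bound directly from Lemma \ref{lem:global energy inequality}, and read off the $L^4$ bounds from Remark \ref{rem1}. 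Your closing caveat about the $|{\bf s}|^4$ term is a genuine and accurate observation rather than a defect of your argument: the paper's own proof at that point invokes only \eqref{equ12} and Lemma \ref{lem:global energy inequality}, which yield the prefactor $\esssup E^{\bf s}_R\leq E_0$ and not $\varepsilon$, so the stated $\varepsilon$-factor on $\int\int|{\bf s}|^4$ really does require local smallness of the spin energy as well (which holds in the application in Section 3, where the initial data are chosen with $\sup_x\int_{B_{R_1}(x)}|\nabla{\bf m}_0|^2+|{\bf s}_0|^2\leq\epsilon_1$, combined with the local monotonicity inequality), a point the paper passes over silently.
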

\begin{proof} Without loss of generality, we can assume that $\tau=0$, since the system \eqref{equ1} is translation invariant.
Multiplying equation \eqref{equ6} by $-\Delta{\bf m}$, integrating over $\Bbb R^2\times[0,T]$ and using H\"older inequality, we have
\begin{equation}
\begin{split}
&\frac{1+\alpha^2}{2}\int_0^T\int_{\Bbb R^2}\frac{d}{dt}|\nabla{\bf m}|^2dxdt+\alpha\int_0^T\int_{\Bbb R^2}|\Delta{\bf m}|^2dxdt\\
\leq &C\int_0^T\int_{\Bbb R^2}|\Delta{\bf m}||\nabla{\bf m}|^2dxdt+C\int_0^T\int_{\Bbb R^2}|{\bf s}||\Delta{\bf m}|dxdt\\
\leq&\frac{\alpha}{2}\int_0^T\int_{\Bbb R^2}|\Delta{\bf m}|^2dxdt+C\int_0^T\int_{\Bbb R^2}|\nabla{\bf m}|^4dxdt+C\int_0^T\int_{\Bbb R^2}|{\bf s}|^2dxdt
\end{split}
\end{equation}
by virtue of $({\bf m}\times\Delta{\bf m})\cdot\Delta{\bf m}=0$ and $\bf |m\times s|\leq |s|$, which implies that
\begin{align*}
\alpha\int_0^T\int_{\Bbb R^2}|\Delta{\bf m}|^2dxdt
\leq (1+\alpha^2)\int_{\Bbb R^2}|\nabla{\bf m}_0|^2dx+C\int_0^T\int_{\Bbb R^2}|\nabla{\bf m}|^4dxdt+C\int_0^T\int_{\Bbb R^2}|{\bf s}|^2dxdt.
\end{align*}
But from Remark \ref{rem1}, it follows that
\begin{align*}\int_{\Bbb R^2\times [0,T]}|\nabla{\bf m}|^4dxdt\leq &C\varepsilon\cdot\left(\int_{\Bbb R^2\times [0,T]}|\nabla^2{\bf m}|^2dxdt+R^{-2}\int_{\Bbb R^2\times [0,T]}|\nabla{\bf m}|^2dxdt\right)\\
\leq & C\varepsilon (1+TR^{-2}).
\end{align*}
which and Lemma \ref{lem:global energy inequality} yield that
\beno
\int_{\Bbb R^2\times[0,T]}|\nabla^2{\bf m}|^2+|\nabla{\bf s}|^2dxdt\leq CE_0+C\varepsilon TR^{-2}E_0,
\eeno
and
\beno
\int_{\Bbb R^2\times[0,T]}
|\nabla {\bf m}(\cdot,t)|^4+|{\bf s}(\cdot,t)|^4dxdt\leq C\varepsilon (1+TR^{-2})E_0.
\eeno
The proof is complete.
\end{proof}

\begin{lemma}\label{lem: local monotonicity inequality}
Let $({\bf s,m})\in W(0,T)\times V(0,T)$ be a solution of (\ref{equ1}) with the initial data $({\bf s}_0,{\bf m}_0)\in L^2(\mathbb{R}^2)\times H_{\bf a}^1(\mathbb{R}^2) $, then
\ben\label{eq:local monotonicity inequ}
\int_{B_R(x_0)}\left(|\nabla{\bf m}|^2+|{\bf s}|^2\right)(\cdot, t)dx\leq \int_{B_{2R}(x_0)}\left(|\nabla{\bf m}_0|^2+|{\bf s}_0|^2\right)dx +C\frac{t}{R^2}E_0+Ct E_0,
\een
for any $x_0\in \mathbb{R}^2$ and $0<t<T.$
\end{lemma}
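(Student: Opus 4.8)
The plan is to prove \eqref{eq:local monotonicity inequ} by localized energy estimates, treating the spin accumulation $\bf s$ and the magnetization gradient $\nabla{\bf m}$ separately and then adding. Fix $x_0$ (translation invariance of \eqref{equ1} lets me do this without loss of generality) and choose a cutoff $\phi\in C_c^\infty(B_{2R}(x_0))$ with $\phi\equiv 1$ on $B_R(x_0)$, $0\le\phi\le 1$ and $|\nabla\phi|\le C/R$. All the integrations by parts and pairings below are justified by the regularity built into $W(0,T)\times V(0,T)$ (in particular $\nabla{\bf s},\nabla^2{\bf m},\partial_t{\bf m}\in L^2$). Each local energy will pick up a cutoff error proportional to $R^{-2}$, and these errors are absorbed using the \emph{global} bounds of Lemma \ref{lem:global energy inequality}, namely $\sup_t\int|{\bf s}|^2\le E_0^{\bf s}$ and $\alpha\sup_t\int|\nabla{\bf m}|^2\le E_0$.

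For the $\bf s$-part, I would test \eqref{equ9} against $\phi^2{\bf s}$. The reaction term gives $\int\phi^2|{\bf s}|^2\ge 0$, the gyroscopic term drops out because $({\bf s}\times{\bf m})\cdot{\bf s}=0$, and integrating the principal part by parts produces $\int\phi^2\nabla{\bf s}:{\bf A}({\bf m})\nabla{\bf s}$ together with a cross term $\int 2\phi\,\nabla\phi\otimes{\bf s}:{\bf A}({\bf m})\nabla{\bf s}$. Using the coercivity \eqref{equ1.4} on the first and Young's inequality on the second to absorb half of it, one is left with
\[
\tfrac{1}{2}\frac{d}{dt}\int\phi^2|{\bf s}|^2+\tfrac{1-\beta}{2}\int\phi^2|\nabla{\bf s}|^2\le \frac{C}{R^2}\int_{B_{2R}(x_0)}|{\bf s}|^2\le \frac{C}{R^2}E_0^{\bf s}.
\]
Dropping the good gradient term and integrating in time yields $\int_{B_R}|{\bf s}|^2(\cdot,t)\le\int_{B_{2R}}|{\bf s}_0|^2+C t R^{-2}E_0$.

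For the $\bf m$-part, the key is to test the Landau--Lifshitz equation in the form \eqref{equ5} against $\phi^2\partial_t{\bf m}$ rather than against $-\Delta{\bf m}$; this avoids the cubic term $\alpha|\nabla{\bf m}|^2{\bf m}$ present in \eqref{equ6}, which could not be localized without the small-energy hypothesis of Lemma \ref{lem:nabla 2 estimate-m}. Expanding the double cross product via ${\bf m}\times({\bf m}\times{\bf v})=({\bf m}\cdot{\bf v}){\bf m}-{\bf v}$ and using ${\bf m}\cdot\partial_t{\bf m}=0$, together with the pointwise identity $|\partial_t{\bf m}|^2=-{\bf m}\times(\Delta{\bf m}+{\bf s})\cdot\partial_t{\bf m}$ (read off from the second equation in \eqref{equ1} after dotting with $\partial_t{\bf m}$), the first-order cross terms cancel and, after integrating $\alpha\int\phi^2\Delta{\bf m}\cdot\partial_t{\bf m}$ by parts, one arrives at
\[
\tfrac{\alpha^2}{2}\int\phi^2|\partial_t{\bf m}|^2+\tfrac{\alpha}{2}\frac{d}{dt}\int\phi^2|\nabla{\bf m}|^2\le \int\phi^2|{\bf s}|^2+\frac{C}{R^2}\int_{B_{2R}(x_0)}|\nabla{\bf m}|^2,
\]
where the two right-hand terms come from Young's inequality applied to $\alpha\int\phi^2{\bf s}\cdot\partial_t{\bf m}$ and to the cutoff commutator $-2\alpha\int\phi(\nabla\phi\cdot\nabla{\bf m})\cdot\partial_t{\bf m}$. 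Discarding the $|\partial_t{\bf m}|^2$ term, integrating in time, and bounding $\int_0^t\!\int\phi^2|{\bf s}|^2\le tE_0^{\bf s}$ and $R^{-2}\int_0^t\!\int_{B_{2R}}|\nabla{\bf m}|^2\le C tR^{-2}\alpha^{-1}E_0$ via Lemma \ref{lem:global energy inequality}, I obtain $\int_{B_R}|\nabla{\bf m}|^2(\cdot,t)\le\int_{B_{2R}}|\nabla{\bf m}_0|^2+CtE_0+CtR^{-2}E_0$, the constants absorbing the powers of $\alpha$.

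Adding the two estimates produces \eqref{eq:local monotonicity inequ}. The only genuinely delicate point is the $\bf m$-estimate: one must pick the multiplier $\phi^2\partial_t{\bf m}$ and exploit the cancellation identity above so that no uncontrolled cubic gradient term survives, and then balance the cutoff commutator $\nabla\phi\cdot\nabla{\bf m}$ and the coupling $\phi^2{\bf s}\cdot\partial_t{\bf m}$ against the coercive $\alpha^2\int\phi^2|\partial_t{\bf m}|^2$ on the left-hand side. The $\bf s$-estimate is comparatively routine, the coercivity \eqref{equ1.4} supplying exactly the dissipation needed to swallow the cutoff error.
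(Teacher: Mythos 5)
Your proposal is correct and takes essentially the same route as the paper: the paper likewise tests the $\bf s$-equation against $\varphi^2{\bf s}$ (using the coercivity \eqref{equ1.4} and Young's inequality on the cutoff cross term) and the Landau--Lifshitz equation against $\varphi^2\partial_t{\bf m}$, invoking the same cancellation identity $\int\varphi^2\,\partial_t{\bf m}\cdot\big({\bf m}\times(\Delta{\bf m}+{\bf s})\big)\,dx=-\int\varphi^2|\partial_t{\bf m}|^2dx$ and absorbing all $R^{-2}$ cutoff errors by the global bounds of Lemma \ref{lem:global energy inequality}. The only cosmetic difference is that the paper multiplies \eqref{equ6} rather than \eqref{equ5}, and your stated reason for preferring \eqref{equ5} is moot: against the multiplier $\varphi^2\partial_t{\bf m}$ the cubic term $\alpha|\nabla{\bf m}|^2{\bf m}$ in \eqref{equ6} vanishes pointwise because ${\bf m}\cdot\partial_t{\bf m}=0$, so the two computations coincide.
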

\begin{proof}
(i) Let $\varphi\in C_0^{\infty}(B_{2R}(x_0))$ satisfy $0\leq\varphi\leq1,\ \varphi\equiv1$ on $B_{R}(x_0)$, $|\nabla\varphi|\leq\frac{C}{R}$. Multiplying equation \eqref{equ6} by $\partial_t{\bf m}\varphi^2$ and integrating over $\Bbb R^2$, we obtain
\begin{equation}\label{equ8}
\begin{split}
&(1+\alpha^2)\int_0^t\int_{\Bbb R^2}|\partial_t{\bf m}|^2\varphi^2 dxdt +\frac{\alpha}{2}\int_0^t\int_{\Bbb R^2}\frac{d}{dt}(|\nabla{\bf m}|^2\varphi^2)dxdt\nonumber\\
&+\int_0^t\int_{\Bbb R^2}\partial_t{\bf m}\cdot({\bf m}\times(\Delta{\bf m+s}))\varphi^2dxdt\nonumber\\
\leq &\alpha\int_0^t\int_{\Bbb R^2}|\nabla{\bf m}||\partial_t{\bf m}||\nabla\varphi|\varphi dxdt+\alpha\int_0^t\int_{\Bbb R^2}|{\bf s}||\partial_t{\bf m}|\varphi^2dxdt.
\end{split}
\end{equation}
By the second equation in $\eqref{equ1}_2$,
$$\int_0^t\int_{\Bbb R^2}\partial_t{\bf m}\cdot({\bf m}\times(\Delta{\bf m+s}))\varphi^2dxdt=-\int_0^t\int_{\Bbb R^2}|\partial_t{\bf m}|^2\varphi^2dxdt,$$
thus we can deduce from \eqref{equ8}
\begin{equation}
\begin{split}
&\alpha^2\int_0^t\int_{\Bbb R^2}|\partial_t{\bf m}|^2\varphi^2dxdt+\frac{\alpha}{2}\int_0^t\int_{\Bbb R^2}\frac{d}{dt}(|\nabla{\bf m}|^2\varphi^2)dxdt\\
\leq & \frac{\alpha^2}{2}\int_0^t\int_{\Bbb R^2}|\partial_t{\bf m}|^2\varphi^2dxdt +C\int_0^t\int_{\Bbb R^2}|\nabla{\bf m}|^2|\nabla\varphi|^2dxdt+C\int_0^t\int_{\Bbb R^2}|{\bf s}|^2\varphi^2dxdt.
\end{split}
\end{equation}
Finally, by Remark \ref{rem1} and Lemma \ref{lem:global energy inequality}
\begin{equation}\label{eq:local estimate}
\begin{split}
E_R^{\bf m}(x_0,t)\leq& \int_0^t\int_{\Bbb R^2}|\nabla{\bf m}|^2\varphi^2(\cdot,t)dxdt=\int_{\Bbb R^2}|\nabla {\bf m}_0|^2\varphi^2dx+\int_0^t\int_{\Bbb R^2}\frac{d}{dt}(|\nabla {\bf m}|^2\varphi^2)dxdt\nonumber\\
\leq &\int_{\Bbb R^2}|\nabla {\bf m}_0|^2\varphi^2dx+CR^{-2}E_0t+CE^{\bf s}_0t\nonumber\\
\leq &E_{2R}^{\bf m}(x_0,0)+C\frac{t}{R^2}E_0+Ct E_0.
\end{split}
\end{equation}

(ii) We then multiply the equation with ${\bf s}\varphi^2$ and integrate over $\Bbb R^2$ to obtain
\begin{align*}
\int_{\Bbb R^2}\partial_t{\bf s}\cdot{\bf s}\varphi^2dx-\int_{\Bbb R^2}\text{div}({\bf A}({\bf m})\nabla {\bf s})\cdot{\bf s}\varphi^2+\int_{\Bbb R^2}|{\bf s}|^2\varphi^2dx=0.
\end{align*}
Noting that
\begin{align*}
-\int_{\Bbb R^2}\text{div}({\bf A}({\bf m})\nabla {\bf s})\cdot {\bf s}\varphi^2dx= & \int_{\Bbb R^2}a_{ij}({\bf m})\partial_j{\bf s}\cdot \partial_i{\bf s}\varphi^2 dx+2\int_{\Bbb R^2}a_{ij}({\bf m})\partial_j{\bf s})\cdot {\bf s}\varphi\partial_i\varphi dx\\
\geq & (1-\beta)\int_{\Bbb R^2}|\nabla {\bf s}|^2\varphi^2dx-2\int_{\Bbb R^2}|\nabla{\bf s}||{\bf s}||\varphi||\nabla\varphi|dx\\
\geq & \frac{(1-\beta)}{2}\int_{\Bbb R^2}|\nabla {\bf s}|^2\varphi^2dx-CR^{-2}\int_{\Bbb R^2}|{\bf s}|^2dx,
\end{align*}
where $a_{ij}$ are the entries of the matrix ${\bf A(m)}$. Integrating over $[0,t]$, one obtains
\begin{align*}
\int_{B_R(x)}|{\bf s}(\cdot, t)|^2dx&+(1-\beta) \int_0^t\int_{\Bbb R^2}|\nabla {\bf s}|^2\varphi^2dx+2\int_0^t\int_{\Bbb R^2}|{\bf s}|^2\varphi^2dx\\
\leq & \int_{B_{2R}(x)}|{\bf s}_0(\cdot, t)|^2dx+CtR^{-2}\int_{\Bbb R^2}|{\bf s}_0|^2dx,
\end{align*}
which and (\ref{eq:local estimate}) yield (\ref{eq:local monotonicity inequ}). The proof is complete.
\end{proof}

\begin{lemma}\label{lem:nabla 2 spcae}
Let $({\bf s,m})\in W(0,T)\times V(0,T)$ be a solution of (\ref{equ1}) with the initial data $({\bf s}_0,{\bf m}_0)\in L^2(\mathbb{R}^2)\times H_{\bf a}^1(\mathbb{R}^2) $. Assume that there exist constants $\varepsilon>0$ and $R_0>0$ such that
$$\sup_{x\in\Bbb R^2,0\leq t\leq T}\int_{B_{R}(x)}|\nabla{\bf m}(x,t)|^2dx<\varepsilon,$$
for any $R\in (0,R_0]$. Then for any $t\in[\tau,T]$ for $\tau>0$, we have
\ben\label{eq:nabla 2 space}
\int_{\Bbb R^2}|\nabla^2 {\bf m}(\cdot,t)|^2+|\nabla {\bf s}(\cdot,t)|^2dx+\int_\tau^T\int_{\Bbb R^2}|\nabla\Delta{\bf m}|^2+|\Delta{\bf s}|^2dxdt\leq C(\tau,T,E_0,\frac{T}{R^2}),
\een
and
\ben\label{eq:nabla 2 space-4}
\int_\tau^T\int_{\Bbb R^2}|\nabla^2{\bf m}|^4+|\nabla{\bf s}|^4dxdt\leq C(\tau,T,E_0,\frac{T}{R^2}).
\een
\end{lemma}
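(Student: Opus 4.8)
The plan is to run a second–order energy estimate, bootstrapping on the first–order bounds already supplied by Lemma \ref{lem:nabla 2 estimate-m}. Set $\Phi(t)=\int_{\R^2}\bigl(\alpha|\nabla^2{\bf m}|^2+|\nabla{\bf s}|^2\bigr)\,dx$ and $\Psi(t)=\int_{\R^2}\bigl(\alpha|\nabla\Delta{\bf m}|^2+|\Delta{\bf s}|^2\bigr)\,dx$; on $\R^2$ one has $\|\nabla^2 f\|_{L^2}=\|\Delta f\|_{L^2}$ and $\|\nabla^3 f\|_{L^2}=\|\nabla\Delta f\|_{L^2}$, so it suffices to control $\Delta{\bf m}$ and $\Delta{\bf s}$. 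Since the data are only $H^1\times L^2$ we cannot start at $t=0$; instead, because $\int_0^T\Phi\,dt<\infty$ by Lemma \ref{lem:nabla 2 estimate-m}, I would first fix a time $t_0\in(0,\tau)$ with $\Phi(t_0)\le\tfrac1\tau\int_0^\tau\Phi\,dt$, which is finite and bounded by $C(\tau,E_0,T/R^2)$, and run the whole estimate on $[t_0,T]\supset[\tau,T]$.

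For the magnetization I would test \eqref{equ6} with $\Delta^2{\bf m}$. The parabolic part yields $\tfrac{1+\alpha^2}{2}\tfrac{d}{dt}\|\Delta{\bf m}\|_{L^2}^2+\alpha\|\nabla\Delta{\bf m}\|_{L^2}^2$, while on the right–hand side I integrate by parts once and pair against $\nabla\Delta{\bf m}$. The crucial algebraic simplification is the orthogonality $({\bf m}\times\nabla\Delta{\bf m})\cdot\nabla\Delta{\bf m}=0$, so the most dangerous contribution $\nabla({\bf m}\times\Delta{\bf m})$ collapses to $\nabla{\bf m}\times\Delta{\bf m}$; the surviving nonlinearities are of the schematic types $|\nabla{\bf m}|\,|\nabla^2{\bf m}|\,|\nabla\Delta{\bf m}|$, $|\nabla{\bf m}|^3|\nabla\Delta{\bf m}|$, and $(|\nabla{\bf m}|\,|{\bf s}|+|\nabla{\bf s}|)|\nabla\Delta{\bf m}|$. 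For the spin accumulation I would test \eqref{equ9} with $-\Delta{\bf s}$; writing ${\bf A(m)}=I-\beta\,{\bf m}\otimes{\bf m}$, the principal term produces $\int|\Delta{\bf s}|^2-\beta\int|{\bf m}\cdot\Delta{\bf s}|^2\ge(1-\beta)\int|\Delta{\bf s}|^2$ by \eqref{equ1.4}, the damping term $-{\bf s}\cdot\Delta{\bf s}$ contributes $+\|\nabla{\bf s}\|_{L^2}^2$, and the remaining commutator and rotation terms are of type $|\nabla{\bf m}|\,|\nabla{\bf s}|\,|\Delta{\bf s}|$ and $|{\bf s}|\,|\Delta{\bf s}|$.

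All these terms are controlled in two dimensions by Young's inequality together with the Ladyzhenskaya inequalities $\|f\|_{L^4}^2\le C\|f\|_{L^2}\|\nabla f\|_{L^2}$ and $\|\nabla{\bf m}\|_{L^6}^3\le C\|\nabla{\bf m}\|_{L^2}\|\nabla^2{\bf m}\|_{L^2}^2$: one absorbs a small multiple of $\Psi$ and is left with contributions of the form $\bigl(\|\nabla{\bf m}\|_{L^4}^4+\|\nabla{\bf m}\|_{L^2}^2\|\nabla^2{\bf m}\|_{L^2}^2\bigr)\Phi$ plus a forcing built from $\|\nabla{\bf m}\|_{L^4}^4+\|{\bf s}\|_{L^4}^4+\|{\bf s}\|_{L^2}^2$. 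Summing the two estimates gives a differential inequality
\[
\Phi'(t)+c\,\Psi(t)\le G(t)\,\Phi(t)+F(t).
\]
The point I would stress is that, although the cubic term $|\nabla{\bf m}|^3|\nabla\Delta{\bf m}|$ produces the apparently quadratic contribution $\|\nabla{\bf m}\|_{L^2}^2\|\nabla^2{\bf m}\|_{L^2}^2\,\Phi=CE_0\,\Phi^2$, the extra factor $\|\nabla^2{\bf m}\|_{L^2}^2=\Phi$ is \emph{already} known to be integrable in time by Lemma \ref{lem:nabla 2 estimate-m}; hence one sets $G(t):=C\bigl(\|\nabla{\bf m}\|_{L^4}^4+E_0\,\Phi(t)+1\bigr)$ and checks that both $G$ and $F$ lie in $L^1(0,T)$, with norms bounded by $E_0,T,T/R^2$ via Lemma \ref{lem:global energy inequality}, Remark \ref{rem1} and Lemma \ref{lem:nabla 2 estimate-m}. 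Gronwall's inequality on $[t_0,T]$ then bounds $\sup_{[\tau,T]}\Phi$, and integrating the inequality back bounds $\int_\tau^T\Psi$, which is \eqref{eq:nabla 2 space}. Finally \eqref{eq:nabla 2 space-4} follows from Ladyzhenskaya once more: $\int_\tau^T\|\nabla^2{\bf m}\|_{L^4}^4\,dt\le C\sup_{[\tau,T]}\|\nabla^2{\bf m}\|_{L^2}^2\int_\tau^T\|\nabla^3{\bf m}\|_{L^2}^2\,dt$, and analogously for $\nabla{\bf s}$.

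The main obstacle I anticipate is closing the $\bf m$–estimate: the borderline cubic term $|\nabla{\bf m}|^3|\nabla\Delta{\bf m}|$ cannot be made linear in $\Phi$ by interpolation alone, and its treatment rests precisely on the a priori time–integrability of $\Phi$ inherited from the previous lemma, so that the nominally quadratic differential inequality is handled as a linear one with an $L^1_t$ coefficient. A secondary difficulty is the variable–coefficient diffusion in \eqref{equ9}: one must invoke the strict ellipticity \eqref{equ1.4} in the form $\int|\Delta{\bf s}|^2-\beta\int|{\bf m}\cdot\Delta{\bf s}|^2\ge(1-\beta)\int|\Delta{\bf s}|^2$ to retain coercivity while absorbing the $\nabla{\bf m}$–commutators, all the while respecting the low regularity of ${\bf s}$ (for which no $L^\infty$ bound is available).
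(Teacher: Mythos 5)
Your proposal is correct and follows essentially the same route as the paper: a second--order energy estimate (testing the $\bf m$--equation at the level of $\Delta{\bf m}$ and the $\bf s$--equation with $-\Delta{\bf s}$, using the ellipticity \eqref{equ1.4} and the orthogonality collapse of the ${\bf m}\times\Delta{\bf m}$ term), closed by Gagliardo--Nirenberg interpolation, Gronwall with an $L^1_t$ coefficient supplied by Lemma \ref{lem:nabla 2 estimate-m}, and a mean--value choice of starting time $t_0\in(0,\tau)$, with \eqref{eq:nabla 2 space-4} then following by interpolation. The only (harmless) deviations are that you run a single coupled Gronwall inequality for $\Phi=\alpha\|\nabla^2{\bf m}\|_{L^2}^2+\|\nabla{\bf s}\|_{L^2}^2$ while the paper first closes the $\bf s$--estimate and then feeds $\|\Delta{\bf s}\|_{L^2}^2,\ \|\nabla{\bf s}\|_{L^4}^4\in L^1_t$ as forcing into the $\bf m$--estimate, and that you absorb the cubic term via the coefficient $E_0\,\Phi\in L^1_t$ where the paper uses $\|\nabla{\bf m}\|_{L^4}^4\in L^1_t$ instead.
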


\begin{proof}
{\bf Step 1. Estimate for ${\bf s}$.} We take the inner product of equation \eqref{equ1} with $-\Delta{\bf s}$ to obtain
\begin{equation*}
-\int_{\Bbb R^2}\partial_t{\bf s}\cdot\Delta{\bf s}dx +\int_{\Bbb R^2}\text{div}({\bf A(m)}\nabla{\bf s})\cdot\Delta{\bf s}dx -\int_{\Bbb R^2}{\bf s}\cdot\Delta{\bf s}dx-\int_{\Bbb R^2}({\bf s\times m})\cdot\Delta{\bf s}dx=0.
\end{equation*}
By integration by parts, we have
\begin{equation}
\begin{split}
\frac12\frac{d}{dt}\int_{\Bbb R^2}|\nabla{\bf s}|^2dx +\frac{1-\beta}{2}\int_{\Bbb R^2}|\Delta{\bf s}|^2dx\leq& \int_{\Bbb R^2}|{\bf s}|^2dx +C\int_{\Bbb R^2}|\nabla{\bf m}|^2|\nabla{\bf s}|^2dx\nonumber\\
\leq& E_0 +C\|\nabla{\bf m}\|_{L^4(\mathbb{R}^2)}^2\|\nabla{\bf s}\|_{L^2(\mathbb{R}^2)}\|\nabla^2{\bf s}\|_{L^2(\mathbb{R}^2)},
\end{split}
\end{equation}
where we used Lemma \ref{lem:global energy inequality} and the Gagliardo-Nirenberg interpolation inequality. By Gronwall's inequality  we have
\beno
\sup_{\tau<t<T}\int_{\Bbb R^2}|\nabla{\bf s}|^2dx +(1-\beta)\int_\tau^T\int_{\Bbb R^2}|\Delta{\bf s}|^2dxdt\leq C(T,E_0,\|\nabla{\bf m}\|_{L^4(\mathbb{R}^2\times(0,T))}) \int_{\Bbb R^2}|\nabla{\bf s}|^2(\cdot,s)dx,
\eeno
where $s\in (0,\tau)$ and we can choose $s$ such that
\beno
\int_{\Bbb R^2}|\nabla{\bf s}|^2(\cdot,s)dx\leq\tau^{-1}\int_{\Bbb R^2\times(0,\tau)}|\nabla{\bf s}|^2dxdt.
\eeno
Hence using Lemma \ref{lem:global energy inequality} and Lemma \ref{lem:nabla 2 estimate-m} we get
\ben\label{equ:estimate od nabla s}
\sup_{\tau<t<T}\int_{\Bbb R^2}|\nabla{\bf s}|^2dx +(1-\beta)\int_\tau^T\int_{\Bbb R^2}|\Delta{\bf s}|^2dxdt\leq C(\tau,T,E_0,\frac{T}{R^2}).
\een
By the interpolation inequality,
it then gives the estimate
\begin{align}\label{equ: nabla s 4}
\int_\tau^T\int_{\Bbb R^2}|\nabla{\bf s}|^4dxdt\leq C(\tau,T,E_0,\frac{T}{R^2}).
\end{align}

{\bf Step 2. Estimate for ${\bf m}$.}

Applying $\triangle$ to equation \eqref{equ6} and then taking inner product with $\triangle{\bf m}$, we have
\begin{align*}
&(1+\alpha^2)\int_{\Bbb R^2}\partial_t\triangle{\bf m}\cdot\triangle{\bf m}dx +\alpha\int_{\Bbb R^2}|\nabla\Delta{\bf m}|^2dx\\
=& \alpha \int_{\Bbb R^2}\triangle{\bf m}\cdot\triangle\left({ |\nabla{\bf m}|^2{\bf m}}\right)dx -\int_{\Bbb R^2}\triangle{\bf m}\cdot\triangle({\bf m}\times\Delta{\bf m})dx \\
 & -\int_{\Bbb R^2}\triangle{\bf m}\cdot\triangle\left[({\bf m\times s})+\alpha{\bf m}\times({\bf m}\times{\bf s})\right]dx=:I_1+I_2+I_3.
\end{align*}

For the term $I_1$, we have
\begin{equation}
\begin{split}
|I_1| \leq & 2\alpha\int_{\Bbb R^2}|\nabla\triangle{\bf m}||\nabla{\bf m}||\nabla^2{\bf m}|dx+\alpha\int_{\Bbb R^2}|\nabla\triangle{\bf m}||\nabla{\bf m}|^3dx\\
\leq & \frac{\alpha}{8}\int_{\Bbb R^2}|\nabla\triangle{\bf m}|^2dx +C\int_{\Bbb R^2}|\nabla{\bf m}|^2(|\nabla^2{\bf m}|^2+|\nabla{\bf m}|^4)dx\\
\leq &  \frac{\alpha}{8}\int_{\Bbb R^2}|\nabla\triangle{\bf m}|^2dx +C\|\nabla{\bf m}\|_{L^4(\mathbb{R}^2)}^2\|\nabla^2{\bf m}\|_{L^2(\mathbb{R}^2)}\|\nabla^3{\bf m}\|_{L^2(\mathbb{R}^2)},
\end{split}
\end{equation}
where we used $\triangle {\bf m}\cdot {\bf m}=-|\nabla {\bf m}|^2 $ and Gagliardo-Nirenberg interpolation inequality. The term $I_2$ is estimated in a similar way:
\begin{equation}
\begin{split}
|I_2|\leq & \|\nabla{\bf m}\|_{L^4(\mathbb{R}^2)}\|\nabla^2{\bf m}\|_{L^4(\mathbb{R}^2)}\|\nabla^3{\bf m}\|_{L^2(\mathbb{R}^2)}\\
\leq & \frac{\alpha}{8}\int_{\Bbb R^2}|\nabla\triangle{\bf m}|^2dx +C\|\nabla{\bf m}\|_{L^4(\mathbb{R}^2)}^2\|\nabla^2{\bf m}\|_{L^2(\mathbb{R}^2)}\|\nabla^3{\bf m}\|_{L^2(\mathbb{R}^2)}.
\end{split}
\end{equation}

For $I_3$, by H\"{o}lder inequality and Lemma \ref{lem:global energy inequality} we have
\begin{equation}
|I_3| \leq CE_0\left[\|\triangle{\bf m}\|_{L^4(\mathbb{R}^2)}^2+  \|\triangle{\bf m}\|_{L^4(\mathbb{R}^2)}\|\nabla{\bf s}\|_{L^4(\mathbb{R}^2)} \right]+C\|\triangle{\bf m}\|_{L^4(\mathbb{R}^2)}\|\triangle{\bf s}\|_{L^2(\mathbb{R}^2)}
\end{equation}
Using  Gagliardo-Nirenberg interpolation inequality again, we have
\begin{equation}
I_3 \leq \frac{\alpha}{8}\int_{\Bbb R^2}|\nabla\triangle{\bf m}|^2dx+C(E_0)\|\triangle{\bf m}\|_{L^2(\mathbb{R}^2)}^2 +\|\nabla{\bf s}\|_{L^2(\mathbb{R}^2)}^4+\|\triangle{\bf s}\|_{L^2(\mathbb{R}^2)}^2.
\end{equation}

Therefore, we have
\begin{equation}
\begin{split}
&\frac{d}{dt}\int_{\Bbb R^2}|\triangle{\bf m}|^2dx +  \alpha\int_{\Bbb R^2}|\nabla\Delta{\bf m}|^2dx\\
\leq&  C(E_0) (1+\|\nabla{\bf m}\|_{L^4(\mathbb{R}^2)}^4)\|\triangle{\bf m}\|_{L^2(\mathbb{R}^2)}^2 +C\|\nabla{\bf s}\|_{L^2(\mathbb{R}^2)}^4+C\|\triangle{\bf s}\|_{L^2(\mathbb{R}^2)}^2,
\end{split}
\end{equation}
which combines (\ref{equ: nabla s 4}) and Lemma \ref{lem:nabla 2 estimate-m} yields that
\ben\label{eq:nabla 2 m estimate'}
\int_{\Bbb R^2}|\nabla^2 {\bf m}(\cdot,t)|^2dx+\int_\tau^T\int_{\Bbb R^2}|\nabla\Delta{\bf m}|^2dxdt\leq C(\tau,T,E_0,\frac{T}{R^2}),
\een
due to the Gronwall's inequality.

Consequently, (\ref{eq:nabla 2 m estimate'}) and (\ref{equ:estimate od nabla s}) imply the required inequality (\ref{eq:nabla 2 space}). The inequality (\ref{eq:nabla 2 space-4}) follows from (\ref{eq:nabla 2 space}) via Gagliardo-Nirenberg interpolation inequality.
The proof is complete.
\end{proof}

Indeed, using the above idea by induction, one can prove the following
\begin{corollary}\label{cor:higher regularity}
Assume that $({\bf s,m})\in W(0,T)\times V(0,T)$ is a solution of (\ref{equ1}) with the initial data $({\bf s}_0,{\bf m}_0)\in L^2(\mathbb{R}^2)\times H_{\bf a}^1(\mathbb{R}^2) $. Then there is a constant $\varepsilon_1$ such that for any $R\in (0,R_0]$, if
$$\esssup_{0\leq t\leq T,x\in\Bbb R^2}\int_{B_R(x)}|\nabla{\bf m}(\cdot,t)|^2dx<\varepsilon,$$
then for all $t\in (\tau,T)$ with $\tau\in (0,T)$, for all $l\geq 1$, it holds that
\begin{equation}
\begin{split}\label{eq:estimate nabla l}
\int_{\Bbb R^2}&|\nabla^{l+1}{\bf m}(\cdot,t)|^2+|\nabla^l{\bf s}|^2(\cdot,t)dx +\int_{\tau}^t\int_{\Bbb R^2}|\nabla^{l+2}{\bf m}|^2+|\nabla^{l+1}{\bf s}|^2dxdt \leq C\left(l,\tau,T,E_0,\frac{T}{R^2}\right).
\end{split}
\end{equation}
Moreover, $\bf m$ and $\bf s$ are regular for all $t\in (0,T)$.
\end{corollary}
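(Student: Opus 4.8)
The plan is to argue by induction on $l$, running the same energy scheme as in Lemma~\ref{lem:nabla 2 spcae}, which is precisely the base case $l=1$. Assume \eqref{eq:estimate nabla l} holds for all orders up to $l$; I will establish it for $l+1$. Introduce the order-$(l+1)$ energy and dissipation
\beq
\mathcal E_{l+1}(t)=\int_{\Bbb R^2}|\nabla^{l+2}\mathbf m|^2+|\nabla^{l+1}\mathbf s|^2\,dx,\qquad \mathcal D_{l+1}(t)=\int_{\Bbb R^2}|\nabla^{l+3}\mathbf m|^2+|\nabla^{l+2}\mathbf s|^2\,dx.
\eeq
The target is a differential inequality $\frac{d}{dt}\mathcal E_{l+1}+c\,\mathcal D_{l+1}\le A(t)\,\mathcal E_{l+1}+B(t)$ with $A,B\in L^1(\tau,T)$, after which Gronwall's inequality closes the step. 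The coefficients $A,B$ will be built only from norms of order $\le l+1$ in $\mathbf m$ and $\le l$ in $\mathbf s$, which are already controlled in $L^\infty_t$ and $L^2_t$ by the inductive hypothesis.

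To produce this inequality I differentiate each equation and pair with the matching derivative. Applying $\nabla^{l+1}$ to the spin equation \eqref{equ9} and pairing in $L^2$ with $\nabla^{l+1}\mathbf s$, the leading part of the principal term integrates by parts to the coercive quantity $(1-\beta)\|\nabla^{l+2}\mathbf s\|_{L^2}^2$ by \eqref{equ1.4}; the commutators in which at least one derivative falls on $\mathbf A(\mathbf m)$ carry a factor $\nabla^{j}\mathbf m$ with $j\ge1$ and hence only intermediate derivatives of $\mathbf s$. The reaction term $\nabla^{l+1}(\mathbf s\times\mathbf m)$ has top-order piece $\nabla^{l+1}\mathbf s\times\mathbf m$, pointwise $L^2$-orthogonal to $\nabla^{l+1}\mathbf s$, so it drops out. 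Applying $\nabla^{l+2}$ to \eqref{equ6} and pairing with $\nabla^{l+2}\mathbf m$, the dissipation $\alpha\|\nabla^{l+3}\mathbf m\|_{L^2}^2$ comes from the $-\alpha\Delta\mathbf m$ term; the genuine top-order contributions of $\mathbf m\times\Delta\mathbf m$ and $|\nabla\mathbf m|^2\mathbf m$ are treated exactly as $I_1,I_2$ were at first order, that is, they either vanish through the pointwise identities $(\mathbf m\times\mathbf v)\cdot\mathbf v=0$ and $\mathbf m\cdot\Delta\mathbf m=-|\nabla\mathbf m|^2$ together with the antisymmetry of the cross product, or reduce after one integration by parts to products of strictly intermediate derivatives.

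Every remaining term is then a product $\prod\nabla^{a_i}\mathbf m\cdot\prod\nabla^{b_j}\mathbf s$ whose total order matches the scaling of $\mathcal D_{l+1}$ and in which at most one factor carries the top derivative. By the Gagliardo--Nirenberg interpolation inequality each such term is bounded by $($a product of norms controlled through levels $\le l)\times\|\mathrm{top}\|_{L^2}$, and Young's inequality absorbs a small multiple of $\mathcal D_{l+1}$ into the left side, leaving a coefficient that is $L^1$ in time (precisely the quantities $\|\nabla\mathbf m\|_{L^4}^4$, $\|\nabla^2\mathbf m\|_{L^4}^{\cdots}$, $\|\Delta\mathbf s\|_{L^2}^2$, etc., already shown integrable in Lemmas~\ref{lem:nabla 2 estimate-m} and \ref{lem:nabla 2 spcae} and their higher analogues). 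For the starting datum I use the inductive hypothesis at level $l$, which gives $\int_\tau^T\mathcal E_{l+1}\,dt<\infty$; hence for any $\tau>0$ one may choose a slice $s\in(\tau/2,\tau)$ with $\mathcal E_{l+1}(s)$ finite and bounded by $2\tau^{-1}\int_{\tau/2}^{\tau}\mathcal E_{l+1}\,dt$, and run Gronwall from $s$ to obtain \eqref{eq:estimate nabla l} on $(\tau,T)$. The hard part is exactly this high-order bookkeeping: for the variable-coefficient diffusion $\mathrm{div}(\mathbf A(\mathbf m)\nabla\mathbf s)$ and the gyromagnetic coupling one must verify that every Leibniz term places at most one factor at the top derivative, so that $\mathcal D_{l+1}$ can absorb it while the inductive hypothesis controls the rest.

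Finally, having \eqref{eq:estimate nabla l} for all $l\ge1$ shows $\mathbf m(\cdot,t)-\mathbf a$ and $\mathbf s(\cdot,t)$ lie in $H^{k}(\Bbb R^2)$ for every $k$, locally uniformly in $t\in(0,T)$; the two-dimensional Sobolev embedding then yields spatial $C^\infty$ bounds, and the equations \eqref{equ6} and \eqref{equ9} upgrade these to bounds on all space--time derivatives by expressing $\partial_t$ through spatial derivatives and iterating. Since $\tau\in(0,T)$ is arbitrary, $\mathbf m$ and $\mathbf s$ are smooth on $\Bbb R^2\times(0,T)$.
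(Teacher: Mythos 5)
Your proposal is correct and follows essentially the same route as the paper: induction on $l$ with high-order energy estimates pairing $\nabla^{l+1}$ of the spin equation with $\nabla^{l+1}\mathbf{s}$ and $\nabla^{l+2}$ of \eqref{equ6} with $\nabla^{l+2}\mathbf{m}$, coercivity from \eqref{equ1.4} and the $\alpha\Delta\mathbf{m}$ term, Gagliardo--Nirenberg plus Young to absorb the single top-order factor into the dissipation, Gronwall started from a good time slice chosen via the inductive space--time integrability, and finally trading spatial for time derivatives to conclude smoothness. The only differences are organizational (you fold the paper's explicit $l=2$ case and its Sobolev-embedding $L^\infty$ bounds \eqref{eq:k infty} into one uniform inductive step), and your phrase ``strictly intermediate derivatives'' for the integrated-by-parts gyromagnetic term is slightly imprecise but immediately repaired by your ``at most one top factor'' bookkeeping.
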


\begin{proof} The case $l=1$ is proved in Lemma \ref{lem:nabla 2 spcae}. Now we consider the case $l=2.$

{\bf Step I. Estimate for ${\bf s}$.} We first improve the regularity of $\bf s$. Taking $\Delta$ to the equation \eqref{equ1} satisfied by $\bf s$ and then taking inner product with $\Delta {\bf s}$, we have
\begin{align}\label{e5}
\frac12\frac{d}{dt}\int_{\Bbb R^2}|\Delta{\bf s}|^2dx-\int_{\Bbb R^2}\Delta div({\bf A(m)}\nabla{\bf s})\cdot\Delta{\bf s}dx+\int_{\Bbb R^2}|\Delta{\bf s}|^2dx+\int_{\Bbb R^2}\Delta({\bf s}\times{\bf m})\cdot\Delta{\bf s}=0.
\end{align}
For the second term on the left, by $\triangle {\bf m}\cdot {\bf m}=-|\nabla {\bf m}|^2 $ we have
\begin{equation*}
\begin{split}
&\int_{\Bbb R^2}\Delta div({\bf A(m)}\nabla{\bf s})\cdot\Delta{\bf s}dx +(1-\beta)\|\nabla^3{\bf s}\|^2_{L^2(\mathbb{R}^2)}\\
\leq & C\left(\int_{\Bbb R^2}|\nabla{\bf m}||\nabla^2{\bf s}||\nabla^3{\bf s}|dx+\int_{\Bbb R^2}|\nabla^2{\bf m}||\nabla{\bf s}||\nabla^3{\bf s}|dx +\int_{\Bbb R^2}|\nabla{\bf m}|^2|\nabla{\bf s}||\nabla^3{\bf s}|dx\right)\\
\leq & C\|\nabla{\bf m}\|_{L^4(\mathbb{R}^2)}\|\nabla^2{\bf s}\|_{L^4(\mathbb{R}^2)}\|\nabla^3{\bf s}\|_{L^2(\mathbb{R}^2)}+C\|\nabla^2{\bf m}\|_{L^4(\mathbb{R}^2)}\|\nabla{\bf s}\|_{L^4(\mathbb{R}^2)}\|\nabla^3{\bf s}\|_{L^2(\mathbb{R}^2)}.
\end{split}
\end{equation*}
Using Gagliardo-Nirenberg interpolation inequality for the term $\|\nabla^2{\bf s}\|_{L^4(\mathbb{R}^2)}$, we get
\begin{equation*}
\begin{split}
&\int_{\Bbb R^2}\Delta div({\bf A(m)}\nabla{\bf s})\cdot\Delta{\bf s}(\cdot,t)dx+\frac{(1-\beta)}{4}\|\nabla^3{\bf s}\|^2_{L^2(\mathbb{R}^2)}\\
\leq & C\|\nabla{\bf m}\|_{L^4(\mathbb{R}^2)}^4\|\nabla^2{\bf s}\|_{L^2(\mathbb{R}^2)}^2+C\|\nabla^2{\bf m}\|_{L^4(\mathbb{R}^2)}^2\|\nabla{\bf s}\|_{L^4(\mathbb{R}^2)}^2.
\end{split}
\end{equation*}

Moreover, we have
\begin{equation*}
\begin{split}
\int_{\Bbb R^2}\Delta({\bf s}\times{\bf m})\cdot\Delta{\bf s} \leq &C(\|\nabla^2{\bf m}\|_{L^4(\mathbb{R}^2)}\|{\bf s}\|_{L^4(\mathbb{R}^2)}+ \|\nabla{\bf m}\|_{L^4(\mathbb{R}^2)}\|\nabla{\bf s}\|_{L^4(\mathbb{R}^2)} )\|\nabla^3{\bf s}\|_{L^2(\mathbb{R}^2)}\\
\leq & \frac{(1-\beta)}{4}\|\nabla^3{\bf s}\|^2_{L^2(\mathbb{R}^2)}+C\left(\|\nabla^2{\bf m}\|_{L^4(\mathbb{R}^2)}^2\|{\bf s}\|_{L^4(\mathbb{R}^2)}^2+ \|\nabla{\bf m}\|_{L^4(\mathbb{R}^2)}^2\|\nabla{\bf s}\|_{L^4(\mathbb{R}^2)}^2\right).
\end{split}
\end{equation*}

Hence, it follows from (\ref{e5}), that
\begin{equation*}
\begin{split}
\frac{d}{dt}\int_{\Bbb R^2}|\Delta{\bf s}|^2dx+\|\nabla^3{\bf s}\|^2_{L^2(\mathbb{R}^2)} \leq& C\|\nabla{\bf m}\|_{L^4(\mathbb{R}^2)}^4\|\nabla^2{\bf s}\|_{L^2(\mathbb{R}^2)}^2+C\|\nabla^2{\bf m}\|_{L^4(\mathbb{R}^2)}^2\|\nabla{\bf s}\|_{L^4(\mathbb{R}^2)}^2\\
&+C\left(\|\nabla^2{\bf m}\|_{L^4(\mathbb{R}^2)}^2\|{\bf s}\|_{L^4(\mathbb{R}^2)}^2+ \|\nabla{\bf m}\|_{L^4(\mathbb{R}^2)}^2\|\nabla{\bf s}\|_{L^4(\mathbb{R}^2)}^2\right)
\end{split}
\end{equation*}
for $t\in (\tau,T)$. Using Lemma \ref{lem:nabla 2 estimate-m} and Lemma \ref{lem:nabla 2 spcae}, Gronwall's inequality implies that
\begin{equation}\label{eq:estimate of nabla s'}
\sup_{\tau\leq t\leq T}\|\nabla^2{\bf s}\|^2_{L^2}+\int_\tau^T\|\nabla^3{\bf s}\|^2_{L^2}dt\leq C(\tau,T,E_0,\frac{T}{R^2}).
\end{equation}

{\bf Step II. Estimate for ${\bf m}$.} Next we improve the regularity of ${\bf m}$. First we note that by taking $\Delta$ to \eqref{equ6} and then taking inner product of the resultant with $\Delta^2{\bf m}$, we obtain that
\begin{equation*}
\begin{split}
&(1+\alpha^2)\int_{\Bbb R^2}\partial_t\nabla\triangle{\bf m}\cdot\nabla\triangle{\bf m}dx +\alpha\int_{\Bbb R^2}|\nabla^4{\bf m}|^2dx\\
=& \alpha \int_{\Bbb R^2}\nabla\triangle{\bf m}\cdot\nabla\triangle\left({ |\nabla{\bf m}|^2{\bf m}}\right)dx -\int_{\Bbb R^2}\nabla\triangle{\bf m}\cdot\nabla\triangle({\bf m}\times\Delta{\bf m})dx \\
 & -\int_{\Bbb R^2}\nabla\triangle{\bf m}\cdot\nabla\triangle\left[({\bf m\times s})+\alpha{\bf m}\times({\bf m}\times{\bf s})\right]dx=:I_1'+I_2'+I_3'.
\end{split}
\end{equation*}

For the term $I_1'$, we have
\begin{equation*}
\begin{split}
|I_1'|\leq & C\int_{\Bbb R^2}|\nabla^4{\bf m}||\nabla{\bf m}||\nabla^3{\bf m}|dx+C\int_{\Bbb R^2}|\nabla^4{\bf m}||\nabla^2{\bf m}|^2dx\\
\leq &  \frac{\alpha}{8}\int_{\Bbb R^2}|\nabla^4{\bf m}|^2dx+C\|\nabla^2{\bf m}\|_{L^4(\mathbb{R}^2)}^4 +C\|\nabla{\bf m}\|_{L^4(\mathbb{R}^2)}^2\|\nabla^3{\bf m}\|_{L^2(\mathbb{R}^2)}\|\nabla^4{\bf m}\|_{L^2(\mathbb{R}^2)},
\end{split}
\end{equation*}
where we used $\triangle {\bf m}\cdot {\bf m}=-|\nabla {\bf m}|^2 $ and Gagliardo-Nirenberg interpolation inequality. Then
\begin{equation*}
\begin{split}
|I_1'|\leq & \frac{\alpha}{4}\int_{\Bbb R^2}|\nabla^4{\bf m}|^2dx+C\|\nabla^2{\bf m}\|_{L^4(\mathbb{R}^2)}^4+C(\tau,T,E_0,\frac{T}{R^2})\left(1+\|\nabla{\bf m}\|_{L^4(\mathbb{R}^2)}^4\|\nabla^3{\bf m}\|_{L^2(\mathbb{R}^2)}^2\right).
\end{split}
\end{equation*}

The term $I_2'$ is estimated in a similar way since
\begin{equation*}
|I_2'|\leq  C\int_{\Bbb R^2}|\nabla^4{\bf m}||\nabla{\bf m}||\nabla^3{\bf m}|dx.
\end{equation*}

For $I_3'$, by (\ref{eq:estimate of nabla s'}) we get
\begin{equation*}
\begin{split}
|I_3'|\leq & C\int_{\Bbb R^2}|\nabla^4{\bf m}|(|\nabla^2{\bf m}||{\bf s}|+|\nabla{\bf m}||\nabla{\bf s}|+|\nabla^2{\bf s}|)dx\\
\leq &\frac{\alpha}{8}\int_{\Bbb R^2}|\nabla^4{\bf m}|^2dx +C(\tau,T,E_0,\frac{T}{R^2})+C\left(\|\nabla^2{\bf m}\|_{L^4(\mathbb{R}^2)}^2\|{\bf s}\|_{L^4(\mathbb{R}^2)}^2+\|\nabla{\bf m}\|_{L^4(\mathbb{R}^2)}^2\|\nabla{\bf s}\|_{L^4(\mathbb{R}^2)}^2\right).
\end{split}
\end{equation*}

Therefore, we have
\begin{equation*}
\begin{split}
&\frac{d}{dt}\int_{\Bbb R^2}|\triangle{\bf m}|^2dx +  \alpha\int_{\Bbb R^2}|\nabla\Delta{\bf m}|^2dx\\
\leq& C\|\nabla^2{\bf m}\|_{L^4(\mathbb{R}^2)}^4+C(\tau,T,E_0,\frac{T}{R^2})(1+\|\nabla{\bf m}\|_{L^4(\mathbb{R}^2)}^4\|\nabla^3{\bf m}\|_{L^2(\mathbb{R}^2)}^2)\\
&+C(\tau,T,E_0,\frac{T}{R^2})+C\|\nabla^2{\bf m}\|_{L^4(\mathbb{R}^2)}^2\|{\bf s}\|_{L^4(\mathbb{R}^2)}^2+C\|\nabla{\bf m}\|_{L^4(\mathbb{R}^2)}^2\|\nabla{\bf s}\|_{L^4(\mathbb{R}^2)}^2,
\end{split}
\end{equation*}
which combines  Lemma \ref{lem:nabla 2 estimate-m} and Lemma \ref{lem:nabla 2 spcae} yields that
\begin{equation*}
\label{eq:nabla 3 m estimate'}
\int_{\Bbb R^2}|\nabla^3 {\bf m}(\cdot,t)|^2dx+\int_\tau^T\int_{\Bbb R^2}|\nabla^4{\bf m}|^2dxdt\leq C(\tau,T,E_0,\frac{T}{R^2}),
\end{equation*}
due to the Gronwall's inequality.

{\bf Step III. The case $l>2$.} We'll do it by induction. Assume that (\ref{eq:estimate nabla l}) holds for $l\leq k$ with $k\geq 2$, and we are aimed to prove the case $k+1$ also holds. At this time, by Sobolev embedding inequality we have
\ben\label{eq:estimate nabla k}
&&\int_{\Bbb R^2}|\nabla^{l+1}{\bf m}(\cdot,t)|^2+|\nabla^l{\bf s}|^2(\cdot,t)dx+\int_{\tau}^T\int_{\Bbb R^2}|\nabla^{l+2}{\bf m}|^2+|\nabla^{l+1}{\bf m}|^4dxdt\nonumber\\
&& +\int_{\tau}^T\int_{\Bbb R^2}|\nabla^{k+1}{\bf s}|^2+|\nabla^{k}{\bf s}|^4dxdt \leq C \left(k,\tau,T,E_0,\frac{T}{R^2}\right),~~0\leq l\leq k
\een
and
\ben\label{eq:k infty}
\|\nabla^{l-1}{\bf m}\|_{L^\infty(\mathbb{R}^2\times(\tau,T))}+ \|\nabla^{l-2}{\bf s}\|_{L^\infty(\mathbb{R}^2\times(\tau,T))}\leq C \left(k,\tau,T,E_0,\frac{T}{R^2}\right), ~~2\leq l\leq k.
\een

Taking $\Delta$ to the equation \eqref{equ1} satisfied by $\bf s$ and then taking inner product with $\nabla^{k+1} {\bf s}$, we have
\ben\label{equ:s k+1}
&&\frac12\frac{d}{dt}\int_{\Bbb R^2}|\nabla^{k+1} {\bf s}|^2dx-\int_{\Bbb R^2}\nabla^{k+1} div({\bf A(m)}\nabla{\bf s})\cdot\nabla^{k+1} {\bf s}dx\nonumber\\
&&+\int_{\Bbb R^2}|\nabla^{k+1} {\bf s}|^2dx+\int_{\Bbb R^2}\nabla^{k+1} ({\bf s}\times{\bf m})\cdot\nabla^{k+1} {\bf s}=0.
\een
For the second term on the left, by $\triangle {\bf m}\cdot {\bf m}=-|\nabla {\bf m}|^2 $ and $\nabla {\bf m}\in L^\infty$ we have
\begin{equation*}
\begin{split}
&-\int_{\Bbb R^2}\nabla^{k+1} div({\bf A(m)}\nabla{\bf s})\cdot\nabla^{k+1}{\bf s}dx\nonumber\\
\geq &(1-\beta)\|\nabla^{k+2}{\bf s}\|^2_{L^2(\mathbb{R}^2)}-C\|\nabla^{k+1}{\bf s}\|_{L^2(\mathbb{R}^2)}^2-C\|\nabla^2{\bf m}\|_{L^4(\mathbb{R}^2)}^2\|\nabla^{k}{\bf s}\|_{L^4(\mathbb{R}^2)}^2\\
&-C(\|\nabla^2{\bf m}\|_{L^4(\mathbb{R}^2)}^2+\|\nabla^3{\bf m}\|_{L^4(\mathbb{R}^2)}^2)\|\nabla^{k-1}{\bf s}\|_{L^4(\mathbb{R}^2)}^2-\sum_{j=4}^{k-1}\|\nabla^j{\bf A(m)}\|_{L^2(\mathbb{R}^2)}^2\|\nabla^{k+2-j}{\bf s}\|_{L^\infty(\mathbb{R}^2)}^2,
\end{split}
\end{equation*}
where the last term is bounded by $C \left(k,\tau,T,E_0,\frac{T}{R^2}\right)$ due to (\ref{eq:k infty}). The last term of (\ref{equ:s k+1}) is estimated in the same way. Like the arguments in Step I, by Gronwall's inequality one can obtain
\begin{equation*}
\begin{split}
&\int_{\Bbb R^2}|\nabla^{k+1}{\bf s}|^2(\cdot,t)dx+\int_{\tau}^T\int_{\Bbb R^2}|\nabla^{k+2}{\bf s}|^2(\cdot,t)+|\nabla^{k+1}{\bf s}|^4dxdt \leq C \left(k,\tau,T,E_0,\frac{T}{R^2}\right).
\end{split}
\end{equation*}

Similarly, taking $\nabla^{k+2}$ to \eqref{equ6} and then taking inner product of the resultant with $\nabla^{k+2}{\bf m}$, we obtain that
\begin{equation*}
\begin{split}
&(1+\alpha^2)\int_{\Bbb R^2}\partial_t\nabla^{k+2}{\bf m}\cdot\nabla^{k+2}{\bf m}dx +\alpha\int_{\Bbb R^2}|\nabla^{k+3}{\bf m}|^2dx\\
=& \alpha \int_{\Bbb R^2}\nabla^{k+2}{\bf m}\cdot\nabla^{k+2}\left({ |\nabla{\bf m}|^2{\bf m}}\right)dx -\int_{\Bbb R^2}\nabla^{k+2}{\bf m}\cdot\nabla^{k+2}({\bf m}\times\Delta{\bf m})dx \\
 & -\int_{\Bbb R^2}\nabla^{k+2}{\bf m}\cdot\nabla^{k+2}\left[({\bf m\times s})+\alpha{\bf m}\times({\bf m}\times{\bf s})\right]dx=:I_1''+I_2''+I_3''.
\end{split}
\end{equation*}

For the term $I_1''$, by (\ref{eq:k infty}) we have
\beno
|I_1'| \leq   \frac{\alpha}{8}\int_{\Bbb R^2}|\nabla^{k+3}{\bf m}|^2dx+C\|\nabla^{k+2}{\bf m}\|_{L^2(\mathbb{R}^2)}^2 +C\|\nabla^{k+1}{\bf m}\|_{L^2(\mathbb{R}^2)}^2+C\|\nabla^{k}{\bf m}\|_{L^2(\mathbb{R}^2)}^2+C,
\eeno
and other terms are handled in the same way. Hence the case $k+1$ for the inequality (\ref{eq:estimate nabla l}) holds.

Using estimates in Step III and trading spatial derivatives with time derivatives,  one can finish the proof of the Corollary.
\end{proof}


\section{Existence of global weak solution}
\setcounter{equation}{0}

Next we complete the proof of the existence part in Theorem \ref{thm}; see similar arguments in \cite{Struwe85,LLW,WangW2014,Hong11}. We sketch its steps for completeness.

\begin{proof}[Proof of Theorem \ref{thm}]

For any data $({\bf s}_0,{\bf m}_0)\in L^2(\mathbb{R}^2)\times H_{\bf a}^1(\mathbb{R}^2) $, one can approximate it by a sequence of smooth maps $({\bf s}_0,{\bf m}_0)$ in $L^2(\mathbb{R}^2)\times H_{\bf a}^1(\mathbb{R}^2)$, and we can assume
that ${\bf s}_0^k\in H^{4}(\mathbb{R}^2;\mathbb{R}^3)$ and ${\bf \nabla m}_0^k\in H_{\bf a}^{4}(\mathbb{R}^2;S^2)$ (see \cite{SchoenU82}).
Due to the absolute continuity property of the integral, for any $\epsilon_1>0$, there exists $R_0\ge R_1>0$ such that
$$
 \sup_{x\in \mathbb{R}^2}\int_{B_{R_1}(x)}|{\bf \nabla m}_0|^2+|{\bf s}_0|^2dx\le \epsilon_1,
$$
and by the strong convergence of ${\bf m}_0^k$ and ${\bf s}_0^k$,
$$
 \sup_{x\in \mathbb{R}^2}\int_{B_{R_1}(x)}|\nabla{ \bf m}_0^k|^2+|{ \bf s}_0^k|^2dx\le 2\epsilon_1
$$
for a sufficient large $k$. Without loss of generality, we assume that it holds for all $k\geq 1.$

For the data ${\bf m}_0^k$, by Theorem \ref{thm2} there exists a time $T^k$ and a strong solution $({\bf s}^k,{\bf m}^k)$  such that
$$
{\bf s}^k, \nabla {\bf m}^k\in C\left([0,T^k];H^{4}(\mathbb{R}^2)\right).
$$
Hence there exists $T_0^k\leq T^k$ such that
$$
 \sup_{0<t<T_0^k,\\ x\in \mathbb{R}^2}\int_{B_{R}(x)}|\nabla{\bf m}^k(y,t)|^2dy\leq (8+\frac1{\alpha})\epsilon_1,
$$
where $R\leq R_0<1$ and $\epsilon_1<\varepsilon.$
However, by the local monotonic inequality in Lemma \ref{lem: local monotonicity inequality}, we have $T_0^k\geq
\frac{\epsilon_1R_1^2}{4CE_0}=T_0>0$  uniformly.
For any $0<\tau<T_0$, by the estimates in Corollary \ref{cor:higher regularity} for any $l\geq 1$ we get
\ben\label{eq:5.1}
&&\sup_{\tau<t<T_0}\int_{\mathbb{R}^2} |\nabla^{l+1}{\bf m}^k|^2(\cdot,t)+|\nabla^{l}{\bf s}^k|^2(\cdot,t)dx +\int_{\tau}^{T_0}\int_{\mathbb{R}^2} |\nabla^{l+2}{
\bf m}^k(\cdot,s)|^2+|\nabla^{l+1}{
\bf s}^k(\cdot,s)|^2dxds \nonumber\\
&&\le C(l,\epsilon_1,E_0,\tau,  T_0, \frac{T_0}{R^2}).
\een
Moreover, the energy inequality in Lemma \ref{lem:global energy inequality}, {\it a priori}  estimates in Lemma \ref{lem:nabla 2 estimate-m} and the equation
(\ref{equ1}) yield that
\ben\label{eq:5.2}
E(t)\leq E_0,\quad  0<t<T^k,
\een
and
\ben\label{eq:5.3}
\int_{\mathbb{R}^2\times [0,T_0^k]}\big(|\nabla^2 {\bf m}^k|^2+|\nabla {\bf m}^k|^2+|\partial_t {\bf m}^k|^2+|\nabla {\bf m}^k|^4+|{\bf s}^k|^4\big)dxdt\leq C(\epsilon_1,C_0, E_0).
\een

Hence the above estimates (\ref{eq:5.1})-(\ref{eq:5.3}) and Aubin-Lions Lemma yield that there exists a solution $({\bf s},{\bf m}-a)\in W^{1,0}_2 (\mathbb{R}^2\times
[0,T_0];\mathbb{R}^3)\times W^{2,1}_2 (\mathbb{R}^2\times
[0,T_0];\mathbb{R}^3)$ such that
(at most up to a subsequence)
\begin{eqnarray*}
&&{\bf m}^k-a\rightarrow {\bf m}-a,\quad {\rm locally\,\, in}\quad W^{2,1}_2(\mathbb{R}^2\times (0,T_0);\mathbb{R}^3).
\end{eqnarray*}
By (\ref{eq:5.2}), ${\bf s}(t)\rightharpoonup {\bf s}_0$ and $\nabla{\bf m}(t)\rightharpoonup \nabla{\bf m}_0$ weakly in $L^2(\mathbb{R}^2)$, thus
$E_0\leq\liminf_{t\rightarrow0} E(t).$ On the other hand, by the energy estimates of $({\bf m}^k)$, we have
$$E_0\geq\limsup_{t\rightarrow0} E(t).$$
Hence,   ${\bf s}(t)\rightarrow {\bf s}_0$ and $\nabla{\bf m}(t)\rightarrow \nabla{\bf m}_0$ strongly in $L^2(\mathbb{R}^2)$ and ${\bf m}$ is the solution of the equation (\ref{equ1}) with the initial data ${\bf m}_0.$ From the weak limit of regular estimates (\ref{eq:5.1}), we know that $({\bf s},{\bf m})\in C^{\infty}(\mathbb{R}^2\times(0,T_0])$ and $\nabla^{l}{\bf s}(\cdot ,T_0), \nabla^{l+1}{\bf m}(\cdot ,T_0)\in L^2(\mathbb{R}^2) $ for any $l\geq1$. By Theorem \ref{thm2}, there exists a unique smooth
solution of (\ref{equ1}) with the initial data $({\bf s},{\bf m})(\cdot,T_0)$, which is still written as $({\bf s},{\bf m})$, and blow-up criterion
yields that if $({\bf s},{\bf m})$ blows up at finite time $T^*$, then
\beno
\|{\bf s}\|_{L^{\infty}(\mathbb{R}^2)}(t)+\|\nabla{\bf m}\|_{L^{\infty}(\mathbb{R}^2)}(t)\rightarrow\infty,\quad {\rm as} \quad t\rightarrow T^*.
\eeno
As a result, we have
\ben\label{eq:blow-up}
|\nabla^{3}{\bf s}|(x,t)+|\nabla^{4}{\bf m}|(x,t)\not\in L^{\infty}_tL^{2}_x((T_0,T^*)\times \mathbb{R}^2)
\een
We assume that $T_1$ is the first singular time of  $({\bf s},{\bf m})$, then we have
\begin{eqnarray*}
({\bf s},{\bf m})\in C^{\infty}(\mathbb{R}^2\times (0,T_1);  \mathbb{R}^3)\quad {\rm and}
\quad ({\bf s},{\bf m})\not\in C^{\infty}(\mathbb{R}^2\times (0,T_1]; \mathbb{R}^3);
\end{eqnarray*}
and by Corollary \ref{cor:higher regularity} and (\ref{eq:blow-up}), there exists $\epsilon_0>0$ such that
\begin{eqnarray*}
\lim\sup_{t\uparrow T_1}\sup_{x\in \mathbb{R}^2}\int_{B_R(x)}|\nabla{\bf m}|^2(\cdot,t)\geq \epsilon_0,\quad \forall R>0.
\end{eqnarray*}

Finally, since ${\bf m}-a\in C^0([0,T_1], L^2(\mathbb{R}^2))$ by the interpolation inequality (similarly see   P330, \cite{LLW}), we can define
$$ {\bf m}(T_1)-a=\lim_{t\uparrow T_1} {\bf m}(t)-a\quad {\rm in}\quad  L^2(\mathbb{R}^2).$$
Also, ${\bf s}\in C^0([0,T_1], H^{-1}(\mathbb{R}^2))$ and we can define
$$ {\bf s}(T_1)=\lim_{t\uparrow T_1} {\bf s}(t)\quad {\rm in}\quad  H^{-1}(\mathbb{R}^2)$$
in the distribution sense.
On the other hand, by the energy inequality ${\bf s},\nabla{\bf m}\in L^{\infty}(0,T_1;L^2(\mathbb{R}^2))$, hence
$\nabla{\bf m}(t)\rightharpoonup \nabla{\bf m}(T_1)$. Similarly we can extend $T_1$ to $T_2$ and so on. It's easy to check that the energy loss at every singular
time $T_i$ for $i\geq 1$ is at least $\epsilon_1$, thus the number $L$ of the singular time is finite. Moreover, singular points at every singular time are finite by similar arguments as in \cite{Struwe85}, since $\partial_tu\in L^2_{x,t}$ in Lemma \ref{lem:global energy inequality} and the local monotonicity inequality in Lemma \ref{lem: local monotonicity inequality} hold. Assume that singular points are $(x_i^{j},T_i)$ with $1\leq j\leq L_i$ and $i\leq L$, and we have
\begin{eqnarray*}
\lim\sup_{t\uparrow T_i}\int_{B_R(x_i^{j})}|\nabla{\bf m}|^2(\cdot,t)\geq \epsilon_0,\quad \forall R>0.
\end{eqnarray*}
The proof is complete.\endproof
\end{proof}

\section{Uniqueness of weak solutions}
\setcounter{equation}{0}

In this section, we prove the following uniqueness result.
\begin{theorem}\label{uniqueness}
Let $({\bf s}_1,{\bf m}_1)$ and $({\bf s}_2,{\bf m}_2)$ be two weak solutions of \eqref{equ1} in $\Bbb R^2$ with the same initial data $({\bf s}_0,{\bf m}_0)$ as stated in Theorem \ref{thm}, then we have
$$({\bf s}_1,{\bf m}_1)=({\bf s}_2,{\bf m}_2)$$
for any $t\in[0,\infty)$.
\end{theorem}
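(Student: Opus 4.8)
The plan is to prove uniqueness by a quantitative stability estimate on the difference of the two solutions. I set $\delta{\bf s}={\bf s}_1-{\bf s}_2$ and $\delta{\bf m}={\bf m}_1-{\bf m}_2$ and subtract the two copies of \eqref{equ9} and \eqref{equ6}. Splitting the principal part as $\text{div}({\bf A}({\bf m}_1)\nabla{\bf s}_1-{\bf A}({\bf m}_2)\nabla{\bf s}_2)=\text{div}({\bf A}({\bf m}_1)\nabla\delta{\bf s})+\text{div}((({\bf A}({\bf m}_1)-{\bf A}({\bf m}_2))\nabla{\bf s}_2))$, the spin difference obeys
\begin{equation*}
\partial_t\delta{\bf s}-\text{div}\big({\bf A}({\bf m}_1)\nabla\delta{\bf s}\big)+\delta{\bf s}=\text{div}\big(({\bf A}({\bf m}_1)-{\bf A}({\bf m}_2))\nabla{\bf s}_2\big)-({\bf s}_1\times{\bf m}_1-{\bf s}_2\times{\bf m}_2),
\end{equation*}
while $\delta{\bf m}$ solves a perturbed heat equation $(1+\alpha^2)\partial_t\delta{\bf m}-\alpha\Delta\delta{\bf m}={\bf F}$, where ${\bf F}$ gathers the differences of $\alpha|\nabla{\bf m}|^2{\bf m}$, ${\bf m}\times\Delta{\bf m}$, ${\bf m}\times{\bf s}$ and $\alpha{\bf m}\times({\bf m}\times{\bf s})$. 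The genuinely dangerous contributions are the coefficient difference $({\bf A}({\bf m}_1)-{\bf A}({\bf m}_2))\nabla{\bf s}_2\sim\delta{\bf m}\,\nabla{\bf s}_2$ in the $\delta{\bf s}$ equation and the quadratic gradient terms $\nabla{\bf m}\,\nabla\delta{\bf m}$ together with the cross term ${\bf m}\times{\bf s}$ in the $\delta{\bf m}$ equation.

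A plain $L^2$--$H^1$ energy estimate does not close: testing the $\delta{\bf s}$ equation against $\delta{\bf s}$ produces $\int\delta{\bf m}\,\nabla{\bf s}_2\cdot\nabla\delta{\bf s}$, in which the only available bounds are $\nabla{\bf s}_2\in L^2_{x,t}$ and $\nabla\delta{\bf s}\in L^2_{x,t}$ (the dissipation), forcing an $L^\infty$ bound on $\delta{\bf m}$ that is false for $H^1$ functions in two space dimensions. This borderline failure is exactly what the Littlewood--Paley/Besov framework circumvents. My plan is to apply the dyadic decomposition $\delta{\bf s}=\sum_j\Delta_j\delta{\bf s}$, to measure $\delta{\bf m}$ in its natural parabolic class $L^\infty_tL^2_x\cap L^2_t\dot H^1_x$, and to measure $\delta{\bf s}$ in a lowered, scale-invariant Besov class such as $\widetilde L^\infty_t\dot B^{-1/2}_{2,\infty}\cap\widetilde L^2_t\dot B^{1/2}_{2,\infty}$, so that the rough factor $\nabla{\bf s}_2$ is distributed through Bony's paraproduct rather than through a single Hölder inequality.

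For the coefficient-difference term I would use the splitting $\delta{\bf A}\,\nabla{\bf s}_2=T_{\delta{\bf A}}\nabla{\bf s}_2+T_{\nabla{\bf s}_2}\delta{\bf A}+R(\delta{\bf A},\nabla{\bf s}_2)$ and estimate each piece in $\dot B^{-1/2}_{2,\infty}$, distributing regularity so that $\delta{\bf m}$ carries a positive order while ${\bf s}_2$ only needs its energy regularity; the variable-coefficient principal part is handled by a commutator estimate for $[\Delta_j,{\bf A}({\bf m}_1)]\nabla\delta{\bf s}$. The coupling cross term ${\bf m}\times{\bf s}$ is treated by duality, pairing $\delta{\bf s}\in\dot B^{-1/2}_{2,\infty}$ against $\delta{\bf m}\in\dot B^{1/2}_{2,1}$, the latter recovered by interpolating the energy bounds on $\delta{\bf m}$; the remaining cross products use $|{\bf m}_i|\equiv1$, and the quadratic gradient terms in ${\bf F}$ are absorbed via $\nabla{\bf m}_i\in L^4_{x,t}$ from Remark \ref{rem1}. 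Collecting the estimates yields a differential inequality $\frac{d}{dt}Y(t)\le g(t)\,Y(t)$ for the combined quantity $Y(t)=\|\delta{\bf m}(t)\|_{L^2}^2+\|\delta{\bf s}(t)\|_{\dot B^{-1/2}_{2,\infty}}^2$, with a time weight $g$ built from the $L^4_{x,t}$ and $L^2_t\dot H^1_x$ norms of the two solutions and hence locally integrable; a Gronwall (or, should a borderline logarithm survive, an Osgood) argument then gives $Y\equiv0$, since $Y(0)=0$.

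The decisive obstacle is precisely the rough-coefficient term $\text{div}(\delta{\bf A}\,\nabla{\bf s}_2)$: with ${\bf s}$ only of $L^2$ spatial regularity and no $L^\infty$ control available, it sits at the endpoint of the two-dimensional product estimates, and the Besov bookkeeping---assigning a negative index to $\delta{\bf s}$ and exploiting the paraproduct/commutator structure---is what renders it subcritical; the same mechanism must simultaneously reconcile the mismatched regularities of $\delta{\bf s}$ and $\delta{\bf m}$ in the coupling term. Finally, because the global energy bounds of Lemma \ref{lem:global energy inequality} and Remark \ref{rem1} hold on every interval $[0,T]$, the weight $g$ is integrable on all of $[0,\infty)$ irrespective of the (finitely many) singular times, so the argument runs without interruption and delivers $({\bf s}_1,{\bf m}_1)=({\bf s}_2,{\bf m}_2)$ for every $t\in[0,\infty)$.
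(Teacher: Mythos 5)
Your strategy coincides with the paper's in all of its main moves: forming the difference system, diagnosing the borderline failure of the naive $L^2$ energy estimate at the term $\mathrm{div}\bigl(({\bf A}({\bf m}_1)-{\bf A}({\bf m}_2))\nabla{\bf s}_2\bigr)$, localizing with Littlewood--Paley blocks, lowering $\delta{\bf s}$ to a negative-index Besov norm, using Bony's decomposition and a commutator estimate for $[\Delta_j,{\bf A}({\bf m}_1)]\nabla\delta{\bf s}$, and closing with Gronwall. But your choice of metric has a genuine gap at exactly the term you yourself call decisive. You keep $\delta{\bf m}$ in the plain energy class $L^\infty_tL^2_x\cap L^2_t\dot H^1_x$ and lower only $\delta{\bf s}$. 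Any product estimate of the type of Lemma \ref{lem:product3-1} applied to $\delta{\bf A}\,\nabla{\bf s}_2$ with $\nabla{\bf s}_2$ measured merely in $L^2_x$ (all that is available near $t=0$, by Lemma \ref{lem:global energy inequality}) necessarily charges $\delta{\bf m}$ with a \emph{positive} Besov index: after Cauchy--Schwarz against the dissipation $2^{-j}\|\nabla\Delta_j\delta{\bf s}\|_2^2$ you are left with $\|\nabla{\bf s}_2\|_{L^2}^2\|\delta{\bf m}\|_{B^{1/2}_{2,\infty}}^2\lesssim \|\nabla{\bf s}_2\|_{L^2}^2\|\delta{\bf m}\|_{L^2}\|\nabla\delta{\bf m}\|_{L^2}$, and $\|\delta{\bf m}\|_{B^{1/2}_{2,\infty}}^2$ is \emph{not} dominated by your $Y(t)$. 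Absorbing $\|\nabla\delta{\bf m}\|_{L^2}$ into the constant-coefficient dissipation $\alpha\|\nabla\delta{\bf m}\|_{L^2}^2$ via Young leaves the weight $\|\nabla{\bf s}_2\|_{L^2}^4$ on $\|\delta{\bf m}\|_{L^2}^2$; near $t=0$ one only knows $\nabla{\bf s}_2\in L^2_{x,t}$, and the $L^\infty_tL^2_x$ and $L^4$ bounds of Lemma \ref{lem:nabla 2 spcae} hold only on $(\tau,T)$ with constants degenerating as $\tau\downarrow 0$, so this quartic weight is not locally integrable at $t=0$ and your differential inequality $\frac{d}{dt}Y\le g\,Y$ with $g\in L^1$ does not follow. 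The paper's fix is precisely to lower \emph{both} components: it runs a second frequency-localized estimate at the level of $\Delta_j\nabla{\bf m}$ (tested against $\Delta_j\Delta{\bf m}$, with dissipation $\|\Delta_j\Delta{\bf m}\|_2^2$) and Gronwalls the combined quantity $W(t)=\|\delta{\bf s}\|_{B^{-\beta}_{2,\infty}}^2+\|\delta{\bf m}\|_{B^{1-\beta}_{2,\infty}}^2$ with $\beta\in(0,1/2)$ (your endpoint $\beta=1/2$ lies outside this range), so that the coefficient term lands directly on $\bar h(t)W(t)$ with $\bar h$ built only from \emph{squares} of $H^1$ and fourth powers of $L^4_{x,t}$-integrable norms, hence $\bar h\in L^1(0,T_1-\theta)$; in the inhomogeneous framework a separate low-frequency block $\|\Delta_{-1}\delta{\bf m}\|_2^2$ must then be tracked (Propositions \ref{prop:s m-high} and \ref{prop:s m-low}).

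A second gap is the endgame. Your claim that the Gronwall weight is integrable on all of $[0,\infty)$ ``irrespective of the singular times'' is unjustified: the ingredients of the weight (for instance $\|\nabla^2{\bf m}_i\|_{L^2}^2$ and the $L^4_{x,t}$ bounds on $\nabla{\bf m}_i$) come from the small-local-energy estimates of Lemma \ref{lem:nabla 2 estimate-m} and Lemma \ref{lem:nabla 2 spcae}, whose hypotheses fail as $t\uparrow T_1$ --- concentration of local energy is the very definition of a singular time. Accordingly the paper proves $W\equiv 0$ only on $[0,T_1-\theta]$ for every $\theta>0$, identifies the two solutions at $T_1$ through the continuity ${\bf m}\in C^0([0,T_1];L^2)$ and ${\bf s}\in C^0([0,T_1];H^{-1})$ established in the existence part, and then restarts the argument on the next interval, iterating across the finitely many singular times (the step it attributes to the arguments of Xu--Zhang and of Wang--Wang--Zhang). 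Your proposal needs this restart-and-glue mechanism; without it, uniqueness is obtained only up to the first singular time.
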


\subsection{Littlewood-Paley theory and nonlinear estimates}

 Let us recall some basic facts on Littlewood-Paley theory (see \cite{Che}  for more details). Choose two nonnegative radial functions $\chi,\phi\in {\cal
 S}(R^n)$ supported respectively in
$\{\xi\in \R^n,|\xi|\le \frac{4}{3}\}$ and $\{\xi\in \R^n, \frac{3}{4}\le |\xi|\le \frac{8}{3}\}$ such that for any $\xi\in \R^n$,
$$
\chi(\xi)+\sum_{j\ge 0}\phi(2^{-j}\xi)=1.
$$
The frequency localization operator $\Delta_j$ and $S_j$ are defined by
\beno
&&\Delta_j f=\phi(2^{-j}D)f=2^{nj}\int_{\R^n}h(2^{j}y)f(x-y)dy,~~~~\mbox{for}~~ j\ge 0,\\
&&S_j f=\chi(2^{-j}D)f=\sum_{-1\le k\le j-1}\Delta_{k}f=2^{nj}\int_{\R^n}\tilde{h}(2^{j}y)f(x-y)dy,\\
&&\Delta_{-1}f=S_0f, ~~\Delta_j f=0~~ \mbox{for}~~ j\le -2,
\eeno
where $h={\cal F}^{-1}\phi$ and $\tilde{h}={\cal F}^{-1}\chi$.  With this choice of $\phi$, it is easy to verify that
\begin{equation}\label{eq:A.1}
\begin{split}
\Delta_j\Delta_kf=0,~~ {\rm if}~~|j-k|\geq 2;\\
\Delta_j(S_{k-1}f\Delta_kf)=0,~~ {\rm if}~~|j-k|\geq 5.
\end{split}
\end{equation}

In terms of $\Delta_j$, the norm of the inhomogeneous Besov space $B^s_{p,q}$ for $s\in \R,$ and $p,q\geq 1$ is defined by
\beno
\|f\|_{B^s_{p,q}}:=\left\|\left\{2^{js}\|\Delta_jf\|_p\right\}_{j\ge -1}\right\|_{\ell^q},
\eeno
and
\beno
\|f\|_{B^s_{p,\infty}}:=\sup_{j\ge -1}\left\{2^{js}\|\Delta_jf\|_p\right\}.
\eeno

The Bony's decomposition from \cite{Bo} is given by
\ben\label{Bony}
uv=T_uv+T_vu+R(u,v),
\een
where
\beno
T_uv=\sum_j S_{j-1}u\Delta_j v\ \ \ \ \text{and }\ \  \ R(u,v)=\sum_{|j-j'|\leq1} \Delta_j u\Delta_{j'} v.
\eeno

We will constantly use the following Bernstein's inequality \cite{Che}.
\begin{lemma}\label{lem:Berstein}
Let $c\in (0,1)$ and $R>0$. Assume that $1\leq p\leq q\leq \infty$ and $f\in L^p(\R^n)$. Then
\beno
&&{\rm supp} \hat{f}\subset\big\{|\xi|\leq R\big\}\Rightarrow \|\partial^{\alpha}f\|_{q}\leq CR^{|\alpha|+n(\frac1p-\frac1q)}\|f\|_{p},\label{eq:A.3}\\
&&{\rm supp} \hat{f}\subset\big\{cR \leq |\xi|\leq R\big\}\Rightarrow \|f\|_{p}\leq
CR^{-|\alpha|}\sup_{|\beta|=|\alpha|}\|\partial^{\beta}f\|_{p},\label{eq:A.4}
\eeno
where the constant $C$ is independent of $f$ and $R$.
\end{lemma}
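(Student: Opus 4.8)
The plan is to establish each inequality by representing $f$ (or $\partial^\alpha f$) as a convolution against a suitably rescaled Schwartz kernel and then invoking Young's convolution inequality, with the powers of $R$ tracked by a single scaling argument. The spectral-support hypotheses enter exactly to produce these kernels: the upper bound $|\xi|\le R$ for the first estimate, and the separation from the origin $cR\le|\xi|$ for the second.

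\emph{First estimate.} Since $\mathrm{supp}\,\hat f\subset\{|\xi|\le R\}$, I would fix once and for all a function $\phi\in C_c^\infty(\R^n)$ with $\phi\equiv1$ on $\{|\xi|\le1\}$ and $\mathrm{supp}\,\phi\subset\{|\xi|\le2\}$, so that $\hat f(\xi)=\phi(\xi/R)\hat f(\xi)$. Writing $\psi={\cal F}^{-1}\phi\in{\cal S}(\R^n)$ and $\psi_R(x)=R^n\psi(Rx)$, this reads $f=\psi_R*f$, hence $\partial^\alpha f=(\partial^\alpha\psi_R)*f$. Young's inequality with exponents satisfying $1+\tfrac1q=\tfrac1p+\tfrac1r$ gives $\|\partial^\alpha f\|_q\le\|\partial^\alpha\psi_R\|_r\|f\|_p$, and the scaling identity $\partial^\alpha\psi_R(x)=R^{n+|\alpha|}(\partial^\alpha\psi)(Rx)$ yields $\|\partial^\alpha\psi_R\|_r=R^{\,|\alpha|+n(1/p-1/q)}\|\partial^\alpha\psi\|_r$, using $n-\tfrac nr=n(\tfrac1p-\tfrac1q)$. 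Since $\psi\in{\cal S}(\R^n)$, the constant $\|\partial^\alpha\psi\|_r$ is finite for every $r\in[1,\infty]$, which is the asserted bound.

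\emph{Second estimate.} Set $k=|\alpha|$. On the annulus the symbol $1/|\xi|^{2k}$ is smooth, and the multinomial identity $|\xi|^{2k}=\sum_{|\beta|=k}\binom{k}{\beta}\xi^{2\beta}$ combined with $\widehat{\partial^\beta f}=(i\xi)^\beta\hat f$ yields the representation $\hat f=i^{-k}\sum_{|\beta|=k}\binom{k}{\beta}\,\tfrac{\xi^\beta}{|\xi|^{2k}}\,\widehat{\partial^\beta f}$. I would then choose $\theta\in C_c^\infty(\R^n\setminus\{0\})$ with $\theta\equiv1$ on $\{c\le|\xi|\le1\}$ and insert the harmless factor $\theta(\xi/R)$ (legitimate because $\mathrm{supp}\,\hat f\subset\{cR\le|\xi|\le R\}$), defining kernels $g_\beta={\cal F}^{-1}\big(\theta(\cdot)\,(\cdot)^\beta/|\cdot|^{2k}\big)\in{\cal S}(\R^n)$. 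A scaling computation shows the kernel at frequency scale $R$ is $h_\beta(x)=R^{n-k}g_\beta(Rx)$, so that $f=i^{-k}\sum_{|\beta|=k}\binom{k}{\beta}\,h_\beta*\partial^\beta f$ with $\|h_\beta\|_1=R^{-k}\|g_\beta\|_1$. Applying Young's inequality with an $L^1$ kernel, $\|h_\beta*\partial^\beta f\|_p\le\|h_\beta\|_1\|\partial^\beta f\|_p$, gives $\|f\|_p\le CR^{-k}\sup_{|\beta|=k}\|\partial^\beta f\|_p$ with $C=\sum_{|\beta|=k}\binom{k}{\beta}\|g_\beta\|_1$, which is the second estimate since $k=|\alpha|$.

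The main obstacle is the second estimate: one must produce genuinely $L^1$ (indeed Schwartz) kernels representing the inverse symbol $\xi^\beta/|\xi|^{2k}$, and this is possible \emph{only} because the lower bound $cR\le|\xi|$ keeps the frequency support off the origin, allowing $\theta$ to be smooth and compactly supported in $\R^n\setminus\{0\}$; without this separation the symbol would be singular at the origin and its inverse Fourier transform would fail to be integrable. For the first estimate the sole subtlety is the uniform finiteness of $\|\partial^\alpha\psi\|_r$ across the endpoint exponents $r\in[1,\infty]$, which is immediate from the rapid decay of Schwartz functions.
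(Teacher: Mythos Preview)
Your argument is correct and is essentially the standard textbook proof of Bernstein's inequalities. Note, however, that the paper does not supply its own proof of this lemma: it is simply quoted from \cite{Che}, so there is nothing to compare against beyond observing that your approach---writing $f$ as a convolution with a rescaled Schwartz kernel built from a frequency cutoff, then applying Young's inequality and tracking the $R$-scaling---is exactly the argument one finds in that reference. Your treatment of the second inequality via the multinomial expansion $|\xi|^{2k}=\sum_{|\beta|=k}\binom{k}{\beta}\xi^{2\beta}$ and the annular cutoff $\theta$ is clean and correctly isolates the key point, namely that the lower frequency bound $cR\le|\xi|$ is what makes $\xi^\beta/|\xi|^{2k}$ a smooth compactly supported symbol after localization.
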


We need the following nonlinear estimates, seeing \cite{WangWZ2013} for more details.

\begin{lemma}\label{lem:product2-1}
Let $\beta\in (0,1)$. For any $j\ge -1$, there holds
\beno
\|\Delta_j(fg)\|_2\le C2^{j\beta}\|f\|_{B^{-\beta}_{2,\infty}}\|g\|_{H^1}+C2^{\frac {(\beta+1)j} 2}\|g\|_{4}\|f\|_{B^{-\beta}_{2,\infty}}^\frac12\sum_{|j'-j|\le 4}\|\Delta_{j'}f\|_{2}^{\frac{1}{2}}.
\eeno
\end{lemma}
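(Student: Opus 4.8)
The plan is to run Bony's paraproduct decomposition \eqref{Bony}, writing $fg = T_f g + T_g f + R(f,g)$, apply $\Delta_j$ to each of the three pieces, and show that the two paraproducts $T_fg$, together with the remainder $R(f,g)$, account for the first term on the right-hand side while $T_gf$ accounts for the second. Throughout I would use three elementary facts. First, directly from the definition of the Besov norm, $\|\Delta_{j'}f\|_2 \le 2^{j'\beta}\|f\|_{B^{-\beta}_{2,\infty}}$ for every $j'\ge -1$. Second, since we are in $\R^2$, the Bernstein estimates of Lemma \ref{lem:Berstein} give $\|\Delta_{j'}f\|_\infty \le C2^{j'}\|\Delta_{j'}f\|_2$ and $\|\Delta_{j'}f\|_4\le C2^{j'/2}\|\Delta_{j'}f\|_2$. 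Third, $\|\Delta_{j'}g\|_2\le C2^{-j'}\|g\|_{H^1}$ and $S_{j'-1}$ is bounded on $L^4$ uniformly in $j'$.

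For the low–high paraproduct $T_f g=\sum_{j'}S_{j'-1}f\,\Delta_{j'}g$, the spectral localization recorded in \eqref{eq:A.1} forces $\Delta_j(S_{j'-1}f\,\Delta_{j'}g)=0$ unless $|j-j'|\le 4$, so only finitely many terms survive. For each I would estimate $\|\Delta_j(S_{j'-1}f\Delta_{j'}g)\|_2\le\|S_{j'-1}f\|_\infty\|\Delta_{j'}g\|_2$. Summing the combined Bernstein/Besov bound $\|\Delta_k f\|_\infty\le C2^{k(1+\beta)}\|f\|_{B^{-\beta}_{2,\infty}}$ over $k\le j'-2$, which is a convergent geometric series because $1+\beta>0$, yields $\|S_{j'-1}f\|_\infty\le C2^{j'(1+\beta)}\|f\|_{B^{-\beta}_{2,\infty}}$; combining with $\|\Delta_{j'}g\|_2\le C2^{-j'}\|g\|_{H^1}$ and using $2^{j'\beta}\sim 2^{j\beta}$ for $|j'-j|\le 4$ produces exactly $C2^{j\beta}\|f\|_{B^{-\beta}_{2,\infty}}\|g\|_{H^1}$. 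For $T_g f=\sum_{j'}S_{j'-1}g\,\Delta_{j'}f$, again only $|j-j'|\le4$ contribute, and here I would bound $\|\Delta_j(S_{j'-1}g\Delta_{j'}f)\|_2\le\|S_{j'-1}g\|_4\|\Delta_{j'}f\|_4\le C\|g\|_4\,2^{j'/2}\|\Delta_{j'}f\|_2$, then split $\|\Delta_{j'}f\|_2=\|\Delta_{j'}f\|_2^{1/2}\|\Delta_{j'}f\|_2^{1/2}$ and estimate one factor by $2^{j'\beta/2}\|f\|_{B^{-\beta}_{2,\infty}}^{1/2}$. This gives $C2^{j'(1+\beta)/2}\|g\|_4\|f\|_{B^{-\beta}_{2,\infty}}^{1/2}\|\Delta_{j'}f\|_2^{1/2}$; replacing $2^{j'(1+\beta)/2}$ by $2^{(1+\beta)j/2}$ up to a constant and summing over $|j'-j|\le4$ is precisely the second term.

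The remaining and most delicate piece is the resonant term $R(f,g)=\sum_k\Delta_k f\,\widetilde\Delta_k g$ with $\widetilde\Delta_k=\sum_{|k'-k|\le1}\Delta_{k'}$, because here the output frequency is only $\lesssim$ the common input frequency, so after applying $\Delta_j$ one is left with an \emph{infinite} sum $\sum_{k\ge j-N_0}$ for a fixed constant $N_0$. I would control it using Bernstein from $L^1$ to $L^2$, which costs a factor $2^{j}$ in $\R^2$: $\|\Delta_j(\Delta_k f\widetilde\Delta_k g)\|_2\le C2^{j}\|\Delta_k f\|_2\|\widetilde\Delta_k g\|_2\le C2^{j}2^{k\beta}2^{-k}\|f\|_{B^{-\beta}_{2,\infty}}\|g\|_{H^1}$. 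The decisive point, and the reason the hypothesis $\beta\in(0,1)$ is imposed, is that the resulting geometric series $\sum_{k\ge j-N_0}2^{k(\beta-1)}$ converges exactly because $\beta-1<0$, summing to $\sim 2^{j(\beta-1)}$; multiplying back by $2^{j}$ returns $C2^{j\beta}\|f\|_{B^{-\beta}_{2,\infty}}\|g\|_{H^1}$, which merges into the first term.

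Adding the three contributions gives the claimed inequality. I expect the main obstacle to be the bookkeeping of the high–high (resonant) interaction: one must invoke the full spectral-support information to see that $\Delta_j$ only detects input frequencies $k\gtrsim j$, choose the Bernstein step in the correct direction ($L^1\to L^2$) to convert the easily-bounded $L^1$ norm of the product into the $\Delta_j$-localized $L^2$ norm, and then exploit $\beta<1$ to make the tail summable. The two paraproduct terms are routine once the Bernstein exponents in $\R^2$ and the elementary $H^1$ and $L^4$ bounds are arranged correctly.
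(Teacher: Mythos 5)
Your proof is correct and takes essentially the same route as the paper: the paper defers the detailed proof of this lemma to \cite{WangWZ2013}, but the argument it sketches for Corollary \ref{cor:product2-1} is exactly your scheme --- Bony decomposition with the low-high paraproduct and the remainder absorbed into $C2^{j\beta}\|f\|_{B^{-\beta}_{2,\infty}}\|g\|_{H^1}$ (the remainder via the same $L^1\to L^2$ Bernstein step costing $2^j$ in $\R^2$, summable precisely because $\beta<1$), and the high-low paraproduct $T_gf$ estimated through $\|S_{j'-1}g\|_4\|\Delta_{j'}f\|_4$ with the square-root splitting of $\|\Delta_{j'}f\|_2$ yielding the second term. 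All Bernstein exponents and the spectral-localization bookkeeping in your write-up are correct, so there is no gap.
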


\begin{corollary}\label{cor:product2-1}
 Let $\beta\in (0,1)$ and $j\ge -1$.\\
(1) When $f\in H^1$, $g\in L^\infty\cap\dot{H}^1$(for example $f={\bf s}$ and $g={\bf m}$), we have
\beno
\|\Delta_j(fg)\|_2\le  C2^{j\beta}\|f\|_{B^{-\beta}_{2,\infty}}\|\nabla g\|_{L^2}+C2^{j\beta}\|f\|_{B^{-\beta}_{2,\infty}}\|g\|_\infty.
\eeno
(2) When $g\in H^1$, $f\in L^\infty\cap\dot{H}^1$(for example $f={\bf m}$ and $g={\bf s}$), we get
\beno
\|\Delta_j(fg)\|_2\le C2^{j\beta}\|f\|_{B^{1-\beta}_{2,\infty}}\|g\|_{H^1}.
\eeno
\end{corollary}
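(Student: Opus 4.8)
The plan is to prove both estimates directly from Bony's paraproduct decomposition \eqref{Bony}, rather than by quoting Lemma \ref{lem:product2-1} verbatim: that lemma carries $\|g\|_{H^1}$ and $\|g\|_4$, which are infinite for the model example $g=\mathbf m$ (since $\mathbf m-\mathbf a$, not $\mathbf m$, lies in $L^2$ and $L^4$), so the two assertions must be read off from the same paraproduct bookkeeping that underlies that lemma. Writing $fg=T_fg+T_gf+R(f,g)$, I would localize each piece with the quasi-orthogonality relations \eqref{eq:A.1}, so that $\Delta_j(T_fg)$ and $\Delta_j(T_gf)$ reduce to sums over $|k-j|\le 4$, while $\Delta_j(R(f,g))$ becomes a sum over the high-high frequencies $k\ge j-N_0$ for an absolute $N_0$. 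The two recurring tools are Bernstein's inequality (Lemma \ref{lem:Berstein}) in dimension $n=2$ and the defining bound $\|\Delta_kf\|_2\le 2^{ks}\|f\|_{B^{-s}_{2,\infty}}$.

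For part (1), with $f$ measured in $B^{-\beta}_{2,\infty}$ and $g\in L^\infty\cap\dot H^1$, I would distribute the norms so that the bounded factor $g$ never appears through $\|g\|_2$. In $T_gf$ I use $\|S_{k-1}g\|_\infty\le C\|g\|_\infty$ and $\|\Delta_kf\|_2\le 2^{k\beta}\|f\|_{B^{-\beta}_{2,\infty}}$, summing over $|k-j|\le 4$ to produce exactly the term $C2^{j\beta}\|f\|_{B^{-\beta}_{2,\infty}}\|g\|_\infty$. In $T_fg$ I use the Bernstein bound $\|S_{k-1}f\|_\infty\le C2^{k(1+\beta)}\|f\|_{B^{-\beta}_{2,\infty}}$ (a convergent geometric sum since $1+\beta>0$) together with $\|\Delta_kg\|_2\le C2^{-k}\|\nabla g\|_2$, which after summation yields the $\|\nabla g\|_{L^2}$ term. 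For the remainder I gain a factor $2^j$ from Bernstein (the output sits at frequency $2^j$, below the input frequency $2^k$), estimate $\|\Delta_j(\Delta_kf\,\tilde\Delta_kg)\|_2\le C2^{j}\|\Delta_kf\|_2\|\tilde\Delta_kg\|_2$, and sum $\sum_{k\ge j-N_0}2^{j}2^{k\beta}2^{-k}$; this converges precisely because $\beta<1$ and again reproduces the $\|\nabla g\|_{L^2}$ term.

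Part (2) runs along the same lines with the roles reversed: $f\in L^\infty\cap\dot H^1$ is now measured in $B^{1-\beta}_{2,\infty}$ and $g\in H^1$. The key elementary inputs are $\|S_{k-1}f\|_\infty\le C2^{k\beta}\|f\|_{B^{1-\beta}_{2,\infty}}$ and $\|\Delta_kf\|_\infty\le C2^{k\beta}\|f\|_{B^{1-\beta}_{2,\infty}}$, both obtained from Bernstein and $\|\Delta_kf\|_2\le 2^{k(\beta-1)}\|f\|_{B^{1-\beta}_{2,\infty}}$. In $T_fg$ I pair the first of these with $\|\Delta_kg\|_2\le\|g\|_{H^1}$; in $T_gf$ I pair $\|\Delta_kf\|_\infty$ with $\|S_{k-1}g\|_2\le\|g\|_{H^1}$; and in $R(f,g)$ the Bernstein gain gives $\sum_{k\ge j-N_0}2^{j}2^{k(\beta-1)}\|g\|_2$, whose sum is $C2^{j\beta}$ again because $\beta<1$. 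Collecting the three pieces gives $C2^{j\beta}\|f\|_{B^{1-\beta}_{2,\infty}}\|g\|_{H^1}$.

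The hard part will be the high-high remainder $R(f,g)$: with only $\dot H^1$ (and no better) decay available for the rough factor, the naive $L^2\times L^\infty$ estimate diverges, and one must instead extract the Bernstein low-frequency gain $2^{jn/2}=2^{j}$ and sum a geometric series in $k$ that converges only under the standing hypothesis $\beta\in(0,1)$. A secondary technical point is the lowest-frequency block $\Delta_{-1}=S_0$ applied to the non-$L^2$ factor $\mathbf m$; this is harmless in the uniqueness application, where the estimates are applied to differences of solutions that do lie in $L^2$, and I would either restrict attention to $j\ge 0$ or absorb the finitely many low modes using the $L^\infty$ bound.
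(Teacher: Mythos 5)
Your proposal is correct and is essentially the paper's own proof: the paper likewise bypasses Lemma \ref{lem:product2-1} and runs Bony's decomposition $fg=T_fg+T_gf+R(f,g)$ directly, with exactly your bookkeeping --- $\|S_{j'-1}f\|_\infty\le C2^{j'(1+\beta)}\|f\|_{B^{-\beta}_{2,\infty}}$ against $\|\Delta_{j'}g\|_2\le C2^{-j'}\|\nabla g\|_2$ in $T_fg$, the $\|g\|_\infty$ bound in $T_gf$, the Bernstein gain $2^j$ and the geometric sum convergent for $\beta<1$ in the remainder, and the low block $\Delta_{-1}g$ of the non-$L^2$ factor absorbed via $\|g\|_\infty$ (the paper's split $j''\ge 0$ versus $j''<0$), which is the right fix since the estimate is in fact needed at $j=-1$ in Proposition \ref{prop:s m-low}, so your alternative of restricting to $j\ge 0$ should be discarded. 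Your treatment of $T_gf$ in part (2) via $\|\Delta_{j'}f\|_\infty\le C2^{j'\beta}\|f\|_{B^{1-\beta}_{2,\infty}}$ is the same as the paper's $2^j\|S_{j'-1}g\|_2\|\Delta_{j'}f\|_2$ up to one application of Bernstein.
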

\begin{proof}
Similar to the proof of Lemma \ref{lem:product2-1} in \cite{WangWZ2013}, we sketch the proof.

\noindent (1) By Bony's composition (\ref{Bony}), we have
\beno
\triangle_j(fg)=\triangle_j(T_fg+T_gf+R(f,g))
\eeno
We get by (\ref{eq:A.1}) and Lemma \ref{lem:Berstein} that
\begin{align*}
\|\Delta_j(T_{f}g)\|_2&\le  C \sum_{|j'-j|\le 4}\|S_{j'-1}f\|_{{\infty}}\|\Delta_{j'}g\|_2 \\
&\le  C \sum_{|j'-j|\le 4}\sum_{l\leq j'-2}\|\Delta_lf\|_{{\infty}}\|\Delta_{j'}g\|_2 \\
&\le  C \sum_{|j'-j|\le 4}  \sum_{l\leq j'-2} 2^{l(1+\beta)}\|f\|_{B^{-\beta}_{2,\infty}}\|\Delta_{j'}g\|_2 \\
&\leq C2^{j\beta}\|f\|_{B^{-\beta}_{2,\infty}}\|\nabla g\|_{L^2},
\end{align*}
where we have used $j'\geq0$, and
\begin{align*}
\|\Delta_{j}T_{g}f\|_2\le& C\sum_{|j'-j|\le 4}\|S_{j'-1}g\|_\infty\|\Delta_{j'}f\|_{2}\\
\le& C2^{j\beta}\|f\|_{B^{-\beta}_{2,\infty}}\|g\|_\infty.
\end{align*}
Note that $\Delta_j(\Delta_{j'}f\Delta_{j''}g)=0$ if $|j'-j''|\leq 1$ and $\max\{j',j''\}\leq j-3$. Hence,
\begin{align*}
\|\Delta_j{R}(f,g)\|_{2}\le & C2^j\sum_{j',j''\ge j-3,j''\geq 0,|j'-j''|\le 1}\|\Delta_{j'}f\|_2\|\Delta_{j''}g\|_2\\
& + C\sum_{j',j''\ge j-3,j''<0,|j'-j''|\le 1}\|\Delta_{j'}f\|_2\|\Delta_{j''}g\|_\infty\\
\le & C2^{j}\sum_{j'\ge j-3}2^{j'\beta}2^{-j'\beta}\|\Delta_{j'}f\|_2\sum_{j''\ge j-3,|j'-j''|\le 1}2^{-j''}2^{j''}\|\Delta_{j''}g\|_2\\
& +C2^{j\beta}\|f\|_{B^{-\beta}_{2,\infty}}\|g\|_\infty\\
\le & C2^{j\beta}\|f\|_{B^{-\beta}_{2,\infty}}\|\nabla g\|_{L^2}+C2^{j\beta}\|f\|_{B^{-\beta}_{2,\infty}}\|g\|_\infty.
\end{align*}

\noindent (2) When $g={\bf s}\in H^1$, $f={\bf m}\in L^\infty\cap\dot{H}^1$,
the first term
$\|\Delta_j(T_{f}g)\|_2$ is similar, and we consider other terms.
\begin{align*}
\|\Delta_{j}T_{g}f\|_2\le& C2^j\sum_{|j'-j|\le 4}\|S_{j'-1}g\|_2\|\Delta_{j'}f\|_{2}\\
\le& C2^{j\beta}\|f\|_{B^{1-\beta}_{2,\infty}}\|g\|_{L^2}.
\end{align*}
Note that $\Delta_j(\Delta_{j'}f\Delta_{j''}g)=0$ if $|j'-j''|\leq 1$ and $\max\{j',j''\}\leq j-3$. Hence,
\begin{align*}
\|\Delta_j{R}(f,g)\|_{2}\le & C2^j\sum_{j',j''\ge j-3,|j'-j''|\le 1}\|\Delta_{j'}f\|_2\|\Delta_{j''}g\|_2\\
\le & C2^{j}\sum_{j'\ge j-3}2^{j'\beta}2^{-j'\beta}\|\Delta_{j'}f\|_2\sum_{j''\ge j-3,|j'-j''|\le 1}2^{-j''}2^{j''}\|\Delta_{j''}g\|_2\\
\le & C2^{j\beta}\|f\|_{B^{-\beta}_{2,\infty}}\|g\|_{H^1}.
\end{align*}
The proof is complete.
\end{proof}

\begin{lemma}\label{lem:product3-1}
Let $\beta\in (0,1)$. For any $j\ge -1$, we have
\beno
\|\Delta_j(fgh)\|_2\le C2^{j\beta}\big(\|f\|_\infty+\|\nabla f\|_2\big)\|g\|_{B^{1-\beta}_{2,\infty}}\|h\|_2.
\eeno
\end{lemma}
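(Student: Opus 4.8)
The plan is to peel off one factor and reduce to a two-factor estimate that is already available. Writing $fgh=f\cdot(gh)$, I would first establish the auxiliary product estimate
\[
\|\Delta_j(gh)\|_2\le C2^{j\beta}\|g\|_{B^{1-\beta}_{2,\infty}}\|h\|_2,
\]
which is exactly the statement that $gh\in B^{-\beta}_{2,\infty}$ with $\|gh\|_{B^{-\beta}_{2,\infty}}\le C\|g\|_{B^{1-\beta}_{2,\infty}}\|h\|_2$, i.e. the $\R^2$ product rule $B^{1-\beta}_{2,\infty}\cdot L^2\hookrightarrow B^{-\beta}_{2,\infty}$. I would then finish by multiplying back by $f$, invoking Corollary \ref{cor:product2-1}(1) with $gh$ in the role of its $B^{-\beta}_{2,\infty}$ factor and $f\in L^\infty\cap\dot H^1$ in the role of its $L^\infty\cap\dot H^1$ factor. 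Since the proof of that corollary uses only the $B^{-\beta}_{2,\infty}$ norm of its first argument, it yields $\|\Delta_j(f\cdot(gh))\|_2\le C2^{j\beta}\|gh\|_{B^{-\beta}_{2,\infty}}\bigl(\|\nabla f\|_2+\|f\|_\infty\bigr)$, and combining this with the auxiliary estimate gives the claimed bound.

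For the auxiliary estimate I would use Bony's decomposition \eqref{Bony}, $gh=T_gh+T_hg+R(g,h)$, and treat the three pieces via Lemma \ref{lem:Berstein} and the definition of the Besov norm. The two paraproduct (low–high) terms are routine. For $T_gh$ the key input is the Bernstein bound $\|\Delta_\ell g\|_\infty\le C2^{\ell}\|\Delta_\ell g\|_2\le C2^{\ell\beta}\|g\|_{B^{1-\beta}_{2,\infty}}$ in $\R^2$, whose summation over $\ell\le j'-2$ converges because $\beta>0$ and gives $\|S_{j'-1}g\|_\infty\le C2^{j'\beta}\|g\|_{B^{1-\beta}_{2,\infty}}$; pairing this with $\|\Delta_{j'}h\|_2$ by Hölder and using $|j'-j|\le 4$ produces the factor $2^{j\beta}$ and the $\|h\|_2$ on the right. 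For $T_hg$ I would instead place $L^2$ on the low frequencies of $h$ and $L^\infty$ on $\Delta_{j'}g$, using $\|S_{j'-1}h\|_2\le\|h\|_2$ together with the same Bernstein bound for $\|\Delta_{j'}g\|_\infty$.

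The \emph{main obstacle} is the high–high remainder $R(g,h)=\sum_{|j'-j''|\le1}\Delta_{j'}g\,\Delta_{j''}h$, because $h$ carries no derivatives and all of the smoothing must be borrowed from $g$. Here I would exploit the $L^1\to L^2$ Bernstein inequality to gain the full factor $2^{j}$: only frequencies $j'\ge j-3$ survive after applying $\Delta_j$ (by the support properties recorded in \eqref{eq:A.1}), so
\[
\|\Delta_jR(g,h)\|_2\le C2^{j}\sum_{j'\ge j-3}\|\Delta_{j'}g\|_2\sum_{|j''-j'|\le1}\|\Delta_{j''}h\|_2\le C2^{j}\|h\|_2\|g\|_{B^{1-\beta}_{2,\infty}}\sum_{j'\ge j-3}2^{-j'(1-\beta)},
\]
and the geometric series converges precisely because $\beta<1$, leaving $C2^{j\beta}\|g\|_{B^{1-\beta}_{2,\infty}}\|h\|_2$. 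Thus both constraints in $0<\beta<1$ enter: $\beta>0$ for the convergence of the low-frequency sums in the paraproducts, and $\beta<1$ for the remainder. An equivalent route, should one wish to avoid invoking Corollary \ref{cor:product2-1}(1) at negative smoothness, is to run a single Bony decomposition directly on the triple product $fgh$ and distribute the $L^\infty\cap\dot H^1$ bounds of $f$ across the frequency blocks; the individual estimates are the same but the bookkeeping is heavier, so I would present the two-step argument above.
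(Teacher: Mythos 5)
Your proof is correct, but it cannot be ``the same as the paper's'' because the paper never proves Lemma \ref{lem:product3-1}: it is stated without proof and imported from \cite{WangWZ2013} (only Corollary \ref{cor:product2-1} gets a sketched proof in the text). Your two-step reduction is a genuinely different, and in fact more modular, organization than the direct single Bony decomposition of the triple product one finds in that reference. The two points on which your argument hinges both check out. First, the auxiliary product law $B^{1-\beta}_{2,\infty}\cdot L^2\hookrightarrow B^{-\beta}_{2,\infty}$ in $\R^2$: your paraproduct bounds use the two-dimensional Bernstein inequality $\|\Delta_\ell g\|_\infty\le C2^{\ell}\|\Delta_\ell g\|_2\le C2^{\ell\beta}\|g\|_{B^{1-\beta}_{2,\infty}}$, with the low-frequency sums converging since $\beta>0$, and your remainder estimate correctly borrows the full gain $2^{j}$ from the $L^1\to L^2$ Bernstein inequality, with $\sum_{j'\ge j-3}2^{-j'(1-\beta)}$ converging since $\beta<1$; the block $j'=-1$ causes no trouble because the $B^{1-\beta}_{2,\infty}$ norm controls it directly. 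Second, your observation that the proof of Corollary \ref{cor:product2-1}(1) only invokes $\|f\|_{B^{-\beta}_{2,\infty}}$ of its first argument (its stated hypothesis $f\in H^1$ playing no quantitative role), together with $\|g\|_\infty$ and $\|\nabla g\|_2$ of the second, is accurate --- inspecting the paper's sketch, the $T_fg$ term vanishes for $j'\le 0$ since $S_{j'-1}f=0$ there, so the step ``$2^{j'}\|\Delta_{j'}g\|_2\le C\|\nabla g\\|_2$'' is legitimate, and the remaining terms use only those three norms. So substituting $gh$ for the corollary's low-regularity slot and $f\in L^\infty\cap\dot H^1$ for the other yields exactly the claimed bound. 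What your route buys is reusability (the product law $B^{1-\beta}_{2,\infty}\cdot L^2\hookrightarrow B^{-\beta}_{2,\infty}$ is a clean standalone statement) and shorter bookkeeping; what the direct triple-product Bony decomposition buys is that one need not verify that a stated corollary actually holds under weaker hypotheses than advertised --- a step you were right to flag explicitly, since quoting Corollary \ref{cor:product2-1}(1) verbatim, with its $f\in H^1$ hypothesis, would not literally apply to $gh\in B^{-\beta}_{2,\infty}$.
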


%

\begin{lemma}\label{commutator}
Let $\beta\in (0,1)$. For any $j\ge -1$, it holds that
\beno
\big\|[\Delta_j,f]\nabla g\big\|_{2}\le C 2^{\frac{j\beta}{2}}\|\nabla f\|_{4}\|g\|_{B^{-\beta}_{2,\infty}}^\frac12\sum_{|j'-j|\le 4}2^{\frac{j'}{2}}\|\Delta_{j'}g\|_{2}^{\frac{1}{2}}
+C2^{j\beta}\|g\|_{B^{-\beta}_{2,\infty}}\big(\|f\|_\infty+\|\nabla^2 f\|_{2}\big).
\eeno
\end{lemma}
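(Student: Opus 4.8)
The plan is to reduce the commutator to a genuine paraproduct--commutator plus a few ordinary product terms via Bony's decomposition \eqref{Bony}, and then to treat the genuine commutator by the classical kernel-plus-Taylor device while the remaining pieces are handled exactly as in Corollary \ref{cor:product2-1}. First I would write $[\Delta_j,f]\nabla g=\Delta_j(f\nabla g)-f\Delta_j\nabla g$ and decompose $f\nabla g=T_f\nabla g+T_{\nabla g}f+R(f,\nabla g)$. The only part needing cancellation is the low-high paraproduct $T_f\nabla g$, which I would pair against the subtracted term $f\Delta_j\nabla g$. Using \eqref{eq:A.1} and $\Delta_j\Delta_{j'}=0$ for $|j-j'|\ge 2$, one gets the exact identity
\[
\Delta_jT_f\nabla g-f\Delta_j\nabla g=\sum_{|j'-j|\le 4}[\Delta_j,S_{j'-1}f]\Delta_{j'}\nabla g-\sum_{|j'-j|\le 1}(f-S_{j'-1}f)\Delta_j\Delta_{j'}\nabla g .
\]
The first sum is the genuine commutator and will produce the first term on the right-hand side; the second sum, together with $\Delta_jT_{\nabla g}f$ and $\Delta_jR(f,\nabla g)$, will produce $C2^{j\beta}\|g\|_{B^{-\beta}_{2,\infty}}(\|f\|_\infty+\|\nabla^2f\|_2)$.

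For the genuine commutator I would use the kernel representation $\Delta_ju(x)=2^{2j}\int h(2^jy)u(x-y)\,dy$ together with the Taylor expansion $S_{j'-1}f(x-y)-S_{j'-1}f(x)=-\int_0^1 y\cdot\nabla S_{j'-1}f(x-\sigma y)\,d\sigma$. The extra factor $y$ combines with $2^{2j}h(2^jy)$ into the kernel $2^{-j}\tilde h(2^jy)$, $\tilde h(z)=zh(z)\in L^1$: this is the decisive $2^{-j}$ gain, and it simultaneously moves one derivative onto $f$. H\"older with exponents $(4,4)$ then gives $\|[\Delta_j,S_{j'-1}f]\Delta_{j'}\nabla g\|_2\le C2^{-j}\|\nabla f\|_4\|\Delta_{j'}\nabla g\|_4$, and Bernstein's inequality (Lemma \ref{lem:Berstein}) turns $\|\Delta_{j'}\nabla g\|_4$ into $C2^{3j'/2}\|\Delta_{j'}g\|_2$. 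I would finally split $\|\Delta_{j'}g\|_2=\|\Delta_{j'}g\|_2^{1/2}\|\Delta_{j'}g\|_2^{1/2}$, bounding one factor by $2^{j'\beta/2}\|g\|_{B^{-\beta}_{2,\infty}}^{1/2}$ from the Besov norm; summing over $|j'-j|\le 4$ and using $2^{j'\beta/2}\approx 2^{j\beta/2}$ yields precisely $C2^{j\beta/2}\|\nabla f\|_4\|g\|_{B^{-\beta}_{2,\infty}}^{1/2}\sum_{|j'-j|\le 4}2^{j'/2}\|\Delta_{j'}g\|_2^{1/2}$. This is the same half-power bookkeeping that produces the second term of Lemma \ref{lem:product2-1}.

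The remaining three families need no cancellation. In the correction sum I would use $\|f-S_{j'-1}f\|_\infty\le C2^{-j'}\|\nabla^2f\|_2$ (Bernstein, as these blocks carry frequencies of size at least $2^{j'}$) against $\|\Delta_j\Delta_{j'}\nabla g\|_2\le C2^{j(1+\beta)}\|g\|_{B^{-\beta}_{2,\infty}}$, giving $C2^{j\beta}\|\nabla^2f\|_2\|g\|_{B^{-\beta}_{2,\infty}}$. For $\Delta_jT_{\nabla g}f$ I would bound $\|S_{j'-1}\nabla g\|_2\le C2^{j'(1+\beta)}\|g\|_{B^{-\beta}_{2,\infty}}$ against $\|\Delta_{j'}f\|_\infty$; for the resonant term $\Delta_jR(f,\nabla g)$ I would extract the factor $2^j$ from $\Delta_j$ acting on an $L^1$ product (Lemma \ref{lem:Berstein} with $p=1$) and sum the resulting geometric series, which converges because $0<\beta<1$ and two derivatives sit on $f$. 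The low-frequency blocks of $f$, where the $\|\nabla^2f\|_2$ bound is inefficient, are absorbed into $\|f\|_\infty$; together these give the second right-hand term.

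The main obstacle is the genuine commutator step: one must simultaneously execute the kernel--Taylor cancellation to gain $2^{-j}$ and transfer a derivative onto $f$, and then carry out the asymmetric half-power Besov interpolation so as to land on the exact weights $2^{j\beta/2}$ and $2^{j'/2}\|\Delta_{j'}g\|_2^{1/2}$ rather than on a cruder bound; getting both to match the stated inequality is the delicate part. A secondary point requiring care is the convergence of the resonant sum, which rests essentially on $0<\beta<1$ and on placing the two derivatives on the smooth factor $f$ rather than on the rough factor $g$.
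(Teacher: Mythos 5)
Your proof is correct. Note that the paper itself gives no proof of this lemma: it is stated as a known nonlinear estimate with the reader referred to \cite{WangWZ2013}, so there is no in-paper argument to diverge from. Your route --- Bony decomposition, the exact identity isolating $\sum_{|j'-j|\le 4}[\Delta_j,S_{j'-1}f]\Delta_{j'}\nabla g$ plus the $(f-S_{j'-1}f)\Delta_j\Delta_{j'}\nabla g$ correction, the kernel--Taylor gain of $2^{-j}$ with one derivative transferred to $f$, H\"older $(4,4)$, Bernstein in $n=2$ (so $\|\Delta_{j'}\nabla g\|_4\le C2^{3j'/2}\|\Delta_{j'}g\|_2$), and the half-power splitting $\|\Delta_{j'}g\|_2=\|\Delta_{j'}g\|_2^{1/2}\|\Delta_{j'}g\|_2^{1/2}$ --- is exactly the standard argument underlying the cited reference and is consistent with the paper's own proofs of Lemma \ref{lem:product2-1} and Corollary \ref{cor:product2-1}; your exponent bookkeeping (including the use of $0<\beta<1$ for the resonant sum and the absorption of the $\Delta_{-1}f$ block into $\|f\|_\infty$) checks out.
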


\subsection{Proof of Theorem \ref{uniqueness}}

Let ${\bf s}={\bf s}_1-{\bf s}_2, {\bf m}={\bf m}_1-{\bf m}_2$, then from the system \eqref{equ1} we have
\begin{equation}\label{eq:difference of s}
\partial_t{\bf s}-\text{div}({\bf A}({\bf m}_1)\nabla{\bf s})=\text{div}(({\bf A}({\bf m}_1)-{\bf A}({\bf m}_2))\nabla {\bf s}_2)-{\bf s}-{\bf s}\times {\bf m}_1-{\bf s}_2\times {\bf m}
\end{equation}
and
\begin{equation}\label{eq:difference of m}
\begin{split}
(1+\alpha^2)\partial_t{\bf m}-\alpha\Delta{\bf m}=&\alpha|\nabla {\bf m}_1|^2{\bf m}+\alpha ((\nabla {\bf m}_1+\nabla {\bf m}_2):\nabla {\bf m}){\bf m}_2\\
&-({\bf m}_1\times\Delta {\bf m}+{\bf m}\times \Delta {\bf m}_2)-({\bf m}_1\times {\bf s}+{\bf m}\times{\bf s}_2)\\
&-\alpha({\bf m}\times({\bf m}_1\times {\bf s}_1)+{\bf m}_2\times({\bf m}\times {\bf s}_1)+{\bf m}_2\times({\bf m}_2\times {\bf s}))
\end{split}
\end{equation}

For $\beta\in (0,1/2)$, let
\beno
W_j(t)=\left[\|\triangle_j{\bf s}\|_{L^2(R^2)}^2+ \|\triangle_j\nabla {\bf m}\|_{L^2(R^2)}^2 +\|\triangle_{-1}{\bf m}\|_{L^2(R^2)}^2\right]
\eeno
and
\beno
W(t)=\|{\bf s}(\cdot,t)\|_{B^{-\beta}_{2,\infty}(R^2)}^2+ \|{\bf m}\|_{B^{1-\beta}_{2,\infty}(R^2)}^2=\sup_{j\geq -1}2^{-2j\beta}W_j(t)
\eeno

The proof of Theorem \ref{uniqueness} is based on the following two propositions. To state them neatly, we introduce the function
\beno
\bar{h}(t)= 1+\|({\bf s}_1,{\bf s}_2,\nabla {\bf m}_1,\nabla {\bf m}_2)\|_{4}^4+\|(\partial_t{\bf m}_1,\partial_t{\bf m}_2)\|_{2}^2+\|({\bf s}_1,{\bf s}_2,\nabla {\bf m}_1,\nabla {\bf m}_2)\|_{H^1}^2.
\eeno
Since $({\bf s}_1,{\bf m}_1)$ and $({\bf s}_2,{\bf m}_2)$ are both Struwe type weak solutions and $T_1$ is the first blow-up time, we have $\bar{h}(t)\in L^1(0,T_1-\theta)$ for any $\theta>0$.

\begin{proposition}\label{prop:s m-high}
For  any $j\geq -1$ and $\epsilon>0$, it holds that
\begin{align*}
\frac{d}{dt}\left[\|\triangle_j {\bf s}\|_{L^2(R^2)}^2+ \|\triangle_j\nabla {\bf m}\|_{L^2(R^2)}^2\right] &+\frac{\alpha}2 \|\Delta_j\triangle {\bf m}\|_2^2+\frac{\lambda}2 \|\Delta_j\nabla {\bf s}\|_2^2\\
\leq&
C2^{2j\beta}\bar{h}(t)W(t)+\epsilon \sum_{l=j-4}^{j+4}2^{2l}\|\Delta_{l}{\bf s}\|_{2}^2
+\epsilon \sum_{l=j-4}^{j+4}2^{4l}\|\Delta_{l}{\bf m}\|_{2}^2.
\end{align*}
\end{proposition}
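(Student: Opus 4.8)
The plan is to localize the two difference equations \eqref{eq:difference of s}--\eqref{eq:difference of m} in frequency and run a single weighted energy estimate in the pair of low-regularity norms encoded by $W(t)$. First I would apply $\Delta_j$ to \eqref{eq:difference of m} and test against $-\Delta\Delta_j{\bf m}$: this choice is forced by the left-hand side, since integrating by parts gives $-(1+\alpha^2)\int\partial_t\Delta_j{\bf m}\cdot\Delta\Delta_j{\bf m}=\tfrac{1+\alpha^2}2\tfrac{d}{dt}\|\Delta_j\nabla{\bf m}\|_2^2$ and $\alpha\int|\Delta\Delta_j{\bf m}|^2=\alpha\|\Delta_j\Delta{\bf m}\|_2^2$, producing exactly the time derivative of $\|\Delta_j\nabla{\bf m}\|_2^2$ together with the dissipation (half of which will survive as $\tfrac\alpha2\|\Delta_j\Delta{\bf m}\|_2^2$). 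For ${\bf s}$, I would apply $\Delta_j$ to \eqref{eq:difference of s} and test against $\Delta_j{\bf s}$, so that the time term is $\tfrac12\tfrac{d}{dt}\|\Delta_j{\bf s}\|_2^2$, while for the principal part I would write $\Delta_j\text{div}({\bf A}({\bf m}_1)\nabla{\bf s})=\text{div}({\bf A}({\bf m}_1)\nabla\Delta_j{\bf s})+\text{div}([\Delta_j,{\bf A}({\bf m}_1)]\nabla{\bf s})$. One integration by parts and the ellipticity \eqref{equ1.4} turn the first piece into the coercive term $(1-\beta)\|\Delta_j\nabla{\bf s}\|_2^2$, which after absorbing errors leaves the dissipation $\tfrac\lambda2\|\Delta_j\nabla{\bf s}\|_2^2$ of the statement; the commutator is deferred to the nonlinear estimates.

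The heart of the argument is the treatment of the two scaling-critical top-order terms. For the gyromagnetic term ${\bf m}_1\times\Delta{\bf m}$ in \eqref{eq:difference of m} I would split $\Delta_j({\bf m}_1\times\Delta{\bf m})={\bf m}_1\times\Delta_j\Delta{\bf m}+[\Delta_j,{\bf m}_1\times]\Delta{\bf m}$; upon testing, the leading piece is annihilated because ${\bf m}_1\times\Delta_j\Delta{\bf m}\perp\Delta_j\Delta{\bf m}$ pointwise, leaving only the commutator. Both this commutator and the coefficient commutator $[\Delta_j,{\bf A}({\bf m}_1)]\nabla{\bf s}$ are estimated by Lemma \ref{commutator}, taking $f$ built from ${\bf m}_1$ and $g=\nabla{\bf m}$ (respectively $g={\bf s}$); this keeps the derivative loss at order $2^{j\beta}$ and depends on ${\bf m}_1$ only through $\|\nabla{\bf m}_1\|_4$ and $\|\nabla^2{\bf m}_1\|_2$, precisely the quantities appearing in $\bar h(t)$. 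Crucially, the first term of Lemma \ref{commutator} carries the factor $\|g\|_{B^{-\beta}_{2,\infty}}^{1/2}\sum_{|j'-j|\le4}2^{j'/2}\|\Delta_{j'}g\|_2^{1/2}$; squaring and applying Young's inequality with a parameter $\delta\sim 2^{-j\beta}$ converts the prefactor $2^{j\beta}$ into $2^{2j\beta}$ and collapses the neighbouring-frequency sum into exactly the localized remainders $\epsilon\sum_{l=j-4}^{j+4}2^{2l}\|\Delta_l{\bf s}\|_2^2$ (for $g={\bf s}$) and $\epsilon\sum_{l=j-4}^{j+4}2^{4l}\|\Delta_l{\bf m}\|_2^2$ (for $g=\nabla{\bf m}$), while the surviving factors fold into $2^{2j\beta}\bar h(t)W(t)$. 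The second term of Lemma \ref{commutator} contributes directly to $2^{2j\beta}\bar h(t)W(t)$.

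The remaining terms are genuinely lower order and are each handled by the product estimates of Corollary \ref{cor:product2-1} and Lemma \ref{lem:product3-1}. Schematically, every such term is a product of one difference factor (${\bf s}$ or ${\bf m}$, carrying a weak norm bounded by $W(t)^{1/2}$), one or two background factors from $({\bf s}_i,\nabla{\bf m}_i)$, and at most the one top-order difference factor ($\nabla\Delta_j{\bf s}$ or $\Delta_j\Delta{\bf m}$) coming from the test function. For instance $\alpha|\nabla{\bf m}_1|^2{\bf m}$, the strain term $\alpha((\nabla{\bf m}_1+\nabla{\bf m}_2):\nabla{\bf m}){\bf m}_2$, and the coefficient-difference term $\text{div}(({\bf A}({\bf m}_1)-{\bf A}({\bf m}_2))\nabla{\bf s}_2)$ are controlled by Lemma \ref{lem:product3-1}, giving $2^{j\beta}$ times an $L^4$ norm of a background gradient and $\|{\bf m}\|_{B^{1-\beta}_{2,\infty}}$; the cross and Gilbert-damping terms ${\bf s}\times{\bf m}_1$, ${\bf s}_2\times{\bf m}$, ${\bf m}_1\times{\bf s}$, ${\bf m}\times{\bf s}_2$ and their triple products fall under Corollary \ref{cor:product2-1}, while the linear term $-{\bf s}$ obeys $\|\Delta_j{\bf s}\|_2\le 2^{j\beta}W(t)^{1/2}$ directly. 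In each case I would pair against the test function by Cauchy--Schwarz, use Young's inequality to absorb the top-order difference factors into $\tfrac\alpha2\|\Delta_j\Delta{\bf m}\|_2^2$ and $\tfrac\lambda2\|\Delta_j\nabla{\bf s}\|_2^2$, square the localization weight $2^{j\beta}$ into $2^{2j\beta}$, and collect the background $L^4$/$H^1$ factors into $\bar h(t)$.

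The main obstacle is exactly the scaling-critical, low-regularity character of the problem: since no $L^\infty$ bound on $\nabla{\bf m}_i$ is available, every estimate must be carried out at the level of the $L^4$/$H^1$ quantities collected in $\bar h(t)$, and the difference may be measured only in the weak norms $B^{-\beta}_{2,\infty}$ and $B^{1-\beta}_{2,\infty}$. This makes the commutator bookkeeping for the two top-order terms the delicate point --- one must check that the orthogonality together with Lemma \ref{commutator} leaves no uncompensated power $2^{j}$ or $2^{2j}$, and that the square-root structure collapses cleanly into the neighbouring-frequency sums with the correct weights $2^{2l}$ and $2^{4l}$. Once these two terms are controlled, the remaining products are routine applications of the Littlewood--Paley calculus, and the per-frequency inequality of the statement is assembled by adding the ${\bf s}$ and ${\bf m}$ estimates; this inequality is the input to the subsequent summation against $2^{-2j\beta}$ and the Gronwall step (using $\bar h\in L^1(0,T_1-\theta)$) that closes the uniqueness proof.
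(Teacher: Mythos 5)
Your architecture coincides with the paper's proof: you apply $\Delta_j$ to \eqref{eq:difference of m} and test against $-\Delta_j\Delta{\bf m}$, apply $\Delta_j$ to \eqref{eq:difference of s} and test against $\Delta_j{\bf s}$, split off the principal parts via $[\Delta_j,{\bf A}({\bf m}_1)]\nabla{\bf s}$ and $[\Delta_j,{\bf m}_1\times]\Delta{\bf m}$ with the pointwise orthogonality $\langle{\bf m}_1\times\Delta_j\Delta{\bf m},\Delta_j\Delta{\bf m}\rangle=0$, and estimate both commutators by Lemma \ref{commutator}, converting the square-root neighbouring-frequency factor into the remainders $\epsilon\sum_{l}2^{2l}\|\Delta_l{\bf s}\|_2^2$ and $\epsilon\sum_l 2^{4l}\|\Delta_l{\bf m}\|_2^2$ by Young's inequality, exactly as the paper does for $I_1$ and $II_3$. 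The treatment of $I_2$, $I_3$, $I_4$ and the cross/damping terms $II_5,\dots,II_9$ by Lemma \ref{lem:product3-1} and Corollary \ref{cor:product2-1} also matches.

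There is, however, one step that fails as written: the strain term $II_2=\alpha\langle\Delta_j[((\nabla{\bf m}_1+\nabla{\bf m}_2):\nabla{\bf m}){\bf m}_2],\Delta_j\Delta{\bf m}\rangle$, which you assign to Lemma \ref{lem:product3-1}. In that lemma the difference factor must occupy the slot measured in $B^{1-\beta}_{2,\infty}$, i.e.\ appear \emph{undifferentiated} --- which is why it works for $\alpha|\nabla{\bf m}_1|^2{\bf m}$ and ${\bf m}\times\Delta{\bf m}_2$, where the paper takes $f=1$, $g={\bf m}$, and the background carries the $L^2$ slot. In $II_2$ the difference enters as $\nabla{\bf m}$, and neither $\|\nabla{\bf m}\|_{B^{1-\beta}_{2,\infty}}$ nor the alternative $L^2$-slot norm $\|\nabla{\bf m}\|_{L^2}$ is controlled by $W(t)$; the uniqueness norm only gives $\|\nabla{\bf m}\|_{B^{-\beta}_{2,\infty}}\le \|{\bf m}\|_{B^{1-\beta}_{2,\infty}}\le W(t)^{1/2}$. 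The paper instead invokes Lemma \ref{lem:product2-1} with $f=\nabla{\bf m}$ measured in $B^{-\beta}_{2,\infty}$, paying the extra term $C2^{\frac{(\beta+1)j}{2}}\|g\|_4\|f\|_{B^{-\beta}_{2,\infty}}^{1/2}\sum_{|j'-j|\le4}\|\Delta_{j'}f\|_2^{1/2}$, which after Young and Bernstein is absorbed into $\epsilon\sum_{|j'-j|\le4}\|\Delta_{j'}\Delta{\bf m}\|_2^2\sim\epsilon\sum_l 2^{4l}\|\Delta_l{\bf m}\|_2^2$; so part of that remainder in the proposition comes from $II_2$, not only from the commutators as your accounting suggests. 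Since this is exactly the square-root absorption mechanism you already deploy for Lemma \ref{commutator}, the repair is within your toolkit, but as stated your estimate of the strain term does not close.
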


\begin{proposition}\label{prop:s m-low}
It holds that
\begin{align*}
\frac{d}{dt}\|\Delta_{-1}m\|_{2}^2
\le C\bar{h}(t)W(t).
\end{align*}
\end{proposition}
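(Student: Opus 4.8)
The plan is to test the difference equation \eqref{eq:difference of m} for $\mathbf m=\mathbf m_1-\mathbf m_2$ against its lowest Littlewood--Paley block. Applying $\Delta_{-1}$ to \eqref{eq:difference of m} and pairing with $\Delta_{-1}\mathbf m$ in $L^2(\mathbb{R}^2)$, the left-hand side yields
\[
\frac{1+\alpha^2}{2}\frac{d}{dt}\|\Delta_{-1}\mathbf m\|_2^2+\alpha\|\nabla\Delta_{-1}\mathbf m\|_2^2=\sum_i\int_{\mathbb{R}^2}\Delta_{-1}(\mathrm{RHS}_i)\cdot\Delta_{-1}\mathbf m\,dx,
\]
where the dissipation $\alpha\|\nabla\Delta_{-1}\mathbf m\|_2^2\ge0$ may simply be discarded and the constant $1+\alpha^2$ absorbed into $C$. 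The equation for $\mathbf s$ is not needed. It therefore suffices to bound each of the five groups $\mathrm{RHS}_i$ on the right of \eqref{eq:difference of m} by $C\bar h(t)W(t)$.

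Two elementary facts make the block $j=-1$ tame. First, from the very definition $W(t)=\sup_{j\ge-1}2^{-2j\beta}W_j(t)$ one reads off, at $j=-1$, the controls $\|\Delta_{-1}\mathbf m\|_2,\ \|\Delta_{-1}\mathbf s\|_2,\ \|\Delta_{-1}\nabla\mathbf m\|_2\le C\,W(t)^{1/2}$. Second, Bernstein's inequality (Lemma \ref{lem:Berstein}) trivialises the lowest frequencies: every derivative of a $\Delta_{-1}$-localized factor costs only a constant, so in particular $\|\nabla^k\Delta_{-1}\mathbf m\|_2\le C\|\Delta_{-1}\mathbf m\|_2$, and one has $\|\Delta_{-1}f\|_2\le C\|f\|_p$ for every $p\le2$. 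This is precisely why neither dissipation nor any of the $\epsilon$-loss terms of Proposition \ref{prop:s m-high} are required here. The guiding principle for every term is to place the \emph{weak} Besov norm on the difference field --- $\|\mathbf m\|_{B^{1-\beta}_{2,\infty}}$ or $\|\mathbf s\|_{B^{-\beta}_{2,\infty}}$, each $\le CW^{1/2}$ --- and the $\bar h$-controlled strong norms ($\|\nabla\mathbf m_i\|_4$, $\|\mathbf s_i\|_4$, $\|\nabla\mathbf m_i\|_{H^1}$, and $\|\mathbf m_i\|_\infty=1$) on the fixed solutions.

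For the benign groups this is immediate. In the quadratic terms $\alpha|\nabla\mathbf m_1|^2\mathbf m$ and $\alpha((\nabla\mathbf m_1+\nabla\mathbf m_2):\nabla\mathbf m)\mathbf m_2$, in $\mathbf m\times\mathbf s_2$, and in the triple cross products where the rough factor is $\mathbf m$ or $\mathbf s$, I would estimate $\|\Delta_{-1}(\cdot)\|_2$ by Bernstein together with H\"older and the borderline embedding $B^{1-\beta}_{2,\infty}(\mathbb{R}^2)\hookrightarrow L^4$ --- valid exactly because $\beta<\tfrac12$ and $n=2$ --- using $\|\mathbf m\|_4\le C\|\mathbf m\|_{B^{1-\beta}_{2,\infty}}\le CW^{1/2}$; where the rough factor sits at negative regularity (an $\mathbf s$ or a $\nabla\mathbf m$, with no $L^p$ embedding available) I would instead invoke Corollary \ref{cor:product2-1} and Lemma \ref{lem:product3-1}. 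Each such term is then controlled by $C\bar h(t)^{1/2}W(t)^{1/2}\cdot\|\Delta_{-1}\mathbf m\|_2\le C\bar h(t)W(t)$, using $\bar h\ge1$.

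The one genuinely delicate group is the exchange term $\mathbf m_1\times\Delta\mathbf m$ (and its twin $\mathbf m\times\Delta\mathbf m_2$), in which two derivatives fall on the rough difference, controlled only at regularity $1-\beta<1$. Here I would split off the commutator, writing $\Delta_{-1}(\mathbf m_1\times\Delta\mathbf m)=\mathbf m_1\times\Delta\Delta_{-1}\mathbf m+[\Delta_{-1},\mathbf m_1\times]\Delta\mathbf m$. The first piece is harmless, since $\|\Delta\Delta_{-1}\mathbf m\|_2\le C\|\Delta_{-1}\mathbf m\|_2\le CW^{1/2}$ by Bernstein and $\|\mathbf m_1\|_\infty=1$, so its pairing with $\Delta_{-1}\mathbf m$ is $\le CW$. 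The commutator piece, after one integration by parts to expose a single surviving derivative of $\mathbf m$, is exactly of the form governed by Lemma \ref{commutator} with $f=\mathbf m_1$ and gradient $\nabla\mathbf m$; it is bounded by $C\bar h(t)^{1/2}W(t)^{1/2}$, using $\|\nabla\mathbf m\|_{B^{-\beta}_{2,\infty}}\le C\|\mathbf m\|_{B^{1-\beta}_{2,\infty}}\le CW^{1/2}$ and reading off $\|\nabla\mathbf m_1\|_4$, $\|\nabla^2\mathbf m_1\|_2$ from $\bar h$. I expect this commutator estimate --- arranging that the whole computation remains \emph{linear} in the weak $W^{1/2}$-norm of the difference while paying only the $L^1_t$ quantity $\bar h(t)$ on the fixed solutions, so that the subsequent Gronwall argument forces $W\equiv0$ --- to be the main obstacle. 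Summing the five contributions then gives $\tfrac{d}{dt}\|\Delta_{-1}\mathbf m\|_2^2\le C\bar h(t)W(t)$, as claimed.
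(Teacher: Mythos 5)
Your proposal is correct, and its skeleton coincides with the paper's: localize \eqref{eq:difference of m} at $j=-1$, pair with $\Delta_{-1}{\bf m}$, and run the standard principle of putting the weak norms $\|{\bf m}\|_{B^{1-\beta}_{2,\infty}},\|{\bf s}\|_{B^{-\beta}_{2,\infty}}\le CW^{1/2}$ on the difference and the $\bar h$-controlled norms on the two fixed solutions. Where you genuinely diverge is on the critical exchange term $II_3'=-\langle\Delta_{-1}({\bf m}_1\times\Delta{\bf m}),\Delta_{-1}{\bf m}\rangle$. The paper integrates by parts twice to redistribute derivatives, producing three terms $III_1,III_2,III_3$ that are estimated by Corollary \ref{cor:product2-1}(1) and Lemma \ref{lem:product3-1}, with the resulting $\epsilon\|\Delta_{-1}\nabla{\bf m}\|_2^2$ absorbed by the \emph{retained} dissipation $\alpha\|\nabla\Delta_{-1}{\bf m}\|_2^2$. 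You instead transplant the high-frequency mechanism of Proposition \ref{prop:s m-high} (term $II_3$) down to $j=-1$: split $\Delta_{-1}({\bf m}_1\times\Delta{\bf m})={\bf m}_1\times\Delta\Delta_{-1}{\bf m}+[\Delta_{-1},{\bf m}_1\times]\Delta{\bf m}$, bound the commutator by Lemma \ref{commutator}, and note that the principal part, which at $j\ge0$ vanishes by skew-symmetry when paired with $\Delta_j\Delta{\bf m}$, is here merely tame, $\le C\|\Delta_{-1}{\bf m}\|_2^2\le CW$, by Bernstein and $\|{\bf m}_1\|_\infty=1$. Your route buys uniformity (one mechanism for all blocks) and dispenses with dissipation and $\epsilon$-absorption entirely; the paper's buys avoidance of the commutator lemma at the lowest block at the cost of a bespoke integration-by-parts identity. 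Three small remarks: no integration by parts is actually needed before invoking Lemma \ref{commutator}, since $[\Delta_{-1},{\bf m}_1\times]\Delta{\bf m}$ is already of the form $[\Delta_j,f]\nabla g$ with $g=\nabla{\bf m}$ and $\|\nabla{\bf m}\|_{B^{-\beta}_{2,\infty}}\le C\|{\bf m}\|_{B^{1-\beta}_{2,\infty}}$; the twin ${\bf m}\times\Delta{\bf m}_2$ is in fact benign rather than delicate, since both derivatives fall on the fixed solution and Lemma \ref{lem:product3-1} with $h=\Delta{\bf m}_2$, $\|\Delta{\bf m}_2\|_2^2\le\bar h$, applies directly; and your H\"older-plus-embedding shortcut $\|{\bf m}\|_4\le C\|{\bf m}\|_{B^{1-\beta}_{2,\infty}}$ (valid since $\beta<1/2$) must indeed be abandoned whenever the rough factor is $\nabla{\bf m}$ or ${\bf s}$ --- e.g.\ in $II_2'$ --- because $\|\nabla{\bf m}\|_{L^2}$ is \emph{not} controlled by $W^{1/2}$; there Lemma \ref{lem:product2-1} and Corollary \ref{cor:product2-1} are mandatory, as you correctly flag.
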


Then by Gronwall's inequality, we get $W(t)=0$ for $t\in [0,T_1-\theta]$
for any $\theta>0$. Using similar arguments as in \cite{XZ12,WangWZ2013} and \cite{PWW17}, one can complete the proof and we omitted the details.

\subsection{Proof of Proposition \ref{prop:s m-high} and \ref{prop:s m-low}}

In what follows, we prove Proposition \ref{prop:s m-high} and \ref{prop:s m-low}.
\begin{proof}[Proof of Proposition \ref{prop:s m-high}]
We write $\|\cdot\|_{L^2(R^2)}$ as $\|\cdot\|_2$ and $\int_{R^2}fgdx$ as $\langle f,g\rangle$ for simplicity.
From the identity (\ref{eq:difference of s}) and (\ref{eq:difference of m}), we have
\begin{equation}\label{eq:difference of s'}
\begin{split}
&\frac12\partial_t \|\triangle_j {\bf s}\|_{2}^2+{\bf A}({\bf m}_1)\|\nabla \triangle_j{\bf s}\|_{2}^2+\| \triangle_j{\bf s}\|_{2}^2\\
=&-\langle [\triangle_j,{\bf A}({\bf m}_1)]\nabla {\bf s}, \nabla \triangle_j{\bf s}\rangle -\langle \triangle_j[({\bf A}({\bf m}_1)-{\bf A}({\bf m}_2))\nabla {\bf s}_2],\nabla \triangle_j{\bf s}\rangle \\
&-\langle \triangle_j({\bf s}\times{\bf m}_1),\triangle_j {\bf s}\rangle-\langle \triangle_j({\bf s}_2\times {\bf {\bf m}}),\triangle_j {\bf s}\rangle=I_1+\cdots+I_4
\end{split}
\end{equation}
and
\begin{equation}\label{eq:difference of m'}
\begin{split}
&\frac{(1+\alpha^2)}{2}\partial_t\|\triangle_j \nabla {\bf m}\|_{2}^2+\alpha\| \triangle_j\triangle {\bf m}\|_{2}^2\\
=&-\alpha\langle\triangle_j(|\nabla{\bf m}_1|^2{\bf m}),\triangle_j\triangle {\bf m}\rangle-\alpha \langle\triangle_j[((\nabla {\bf m}_1+\nabla {\bf m}_2):\nabla {\bf m}){\bf m}_2],\triangle_j \triangle {\bf m}\rangle\\
&+\langle\triangle_j({\bf m}_1\times\Delta {\bf m}),\triangle_j \triangle {\bf m}\rangle+\langle\triangle_j({\bf m}\times \Delta {\bf m}_2),\triangle_j\triangle {\bf m}\rangle+\langle\triangle_j({\bf m}_1\times {\bf s}),\triangle_j\triangle {\bf m}\rangle\\
&+\langle\triangle_j({\bf m}\times {\bf s}_2),\triangle_j\triangle {\bf m}\rangle+\alpha<\triangle_j({\bf m}\times({\bf m}_1\times {\bf s}_1)),\triangle_j\triangle {\bf m}\rangle\\
&+\alpha\langle\triangle_j({\bf m}_2\times({\bf m}\times {\bf s}_1),\triangle_j\triangle {\bf m}\rangle +\alpha \langle\triangle_j({\bf m}_2\times({\bf m}_2\times {\bf s})), \triangle_j\triangle {\bf m}\rangle\\
=& II_1+\cdots+II_9.
\end{split}
\end{equation}

Now we want to estimate all the terms on the right hand step by step.

$\bullet$ \underline{Estimate of $I_1$.} We have by Lemma \ref{commutator} that
\beno
\big\|[\Delta_j,f]\nabla g\big\|_{2}^2\le C 2^{j\beta}\bar{h}(t)^{1/2}\|g\|_{B^{-\beta}_{2,\infty}}\sum_{|j'-j|\le 4}2^{j'}\|\Delta_{j'}g\|_{2}
+C2^{2j\beta}\bar{h}(t)\|g\|_{B^{-\beta}_{2,\infty}}^2.
\eeno
Hence, for $f={\bf A}({\bf m}_1)$ and $g={\bf s}$ we have
\beno
I_1\leq \epsilon\|\nabla \triangle_j{\bf s}\|_{2}^2+C 2^{j\beta}\bar{h}(t)^{1/2}\|{\bf s}\|_{B^{-\beta}_{2,\infty}}\sum_{|j'-j|\le 4}2^{j'}\|\Delta_{j'}{\bf s}\|_{2} +C2^{2j\beta}\bar{h}(t)\|{\bf s}\|_{B^{-\beta}_{2,\infty}}^2,
\eeno
where $\epsilon>0$ is to be determined.

Note that $\langle{\bf m}_1\times\triangle_j\Delta {\bf m}, \triangle_j \triangle {\bf m}\rangle=0$. Similarly,
using Lemma \ref{commutator} again, for the term $II_3$ we have
\begin{equation}
\begin{split}
II_3\leq &\epsilon\|\triangle_j\triangle {\bf m}\|_{2}^2+C 2^{j\beta}\bar{h}(t)^{1/2}\|{\bf m}\|_{B^{1-\beta}_{2,\infty}}\sum_{|j'-j|\le 4}2^{j'}\|\Delta_{j'}\nabla {\bf m}\|_{2}\\
&+C2^{2j\beta}\bar{h}(t)\|{\bf m}\|_{B^{1-\beta}_{2,\infty}}^2+C\delta_{-1,j}\|\triangle_{-1}{\bf m}\|_2^2.
\end{split}
\end{equation}

$\bullet$ \underline{Estimate of $I_2$.} Let $f=({\bf m}_1,{\bf m}_2)$, $g={\bf m}$ and $h=\nabla {\bf s}_2$. By Lemma \ref{lem:product3-1}, we have
\beno
\|\Delta_j(fgh)\|_2\le C2^{js}\big(\|f\|_\infty+\|\nabla f\|_2\big)\|g\|_{B^{1-s}_{2,\infty}}\|h\|_2.
\eeno
Hence
\begin{equation}
\begin{split}
\|\Delta_j(fgh)\|_2=& \|\triangle_j[({\bf A}({\bf m}_1)-{\bf A}({\bf m}_2))\nabla {\bf s}_2]\|_2\\
\leq & C2^{j\beta}\big(\|({\bf m}_1,{\bf m}_2)\|_\infty+\|\nabla({\bf m}_1,{\bf m}_2)\|_2\big)\|{\bf m}\|_{B^{1-\beta}_{2,\infty}}\|\nabla {\bf s}_2\|_2.
\end{split}
\end{equation}
and
\beno
I_2\leq C\bar{h}(t)\|{\bf m}\|_{B^{1-\beta}_{2,\infty}}^2+  \epsilon\|\nabla \triangle_j{\bf s}\|_{2}^2  .
\eeno
Furthermore,
 choosing $f=1$, $h=|\nabla {\bf m}_1|^2+|\nabla {\bf m}_2|^2$ or $h=|(\triangle {\bf m}_1, \triangle {\bf m}_2)|$, by Lemma \ref{lem:product3-1} we have
\beno
II_1+II_4 \leq C2^{2j\beta}\bar{h}(t)\|{\bf m}\|_{B^{1-\beta}_{2,\infty}}^2.
\eeno

$\bullet$ \underline{Estimate of $I_3$.} By (1) of Corollary \ref{cor:product2-1}, we have
\beno
I_3 \leq C2^{2j\beta}\|{\bf s}\|_{B^{-\beta}_{2,\infty}}^2+\epsilon\|\triangle_j {\bf s}\|_{2}^2.
\eeno
Similarly, we have
\beno
II_5+II_9 \leq C2^{2j\beta}\|{\bf s}\|_{B^{-\beta}_{2,\infty}}^2+ \epsilon\|\triangle_j\triangle {\bf m}\|_{2}^2+C\delta_{-1,j}\|\triangle_{-1}{\bf m}\|_2^2.
\eeno

$\bullet$ \underline{Estimate of $I_4$.} By (2) of Corollary \ref{cor:product2-1}, we have
\beno
I_4\leq C2^{2j\beta}\bar{h}(t)\|{\bf m}\|_{B^{1-\beta}_{2,\infty}}^2+ \epsilon\|\triangle_j\nabla {\bf s}\|_{2}^2.
\eeno
Similarly, we have
\beno
II_6+II_7+II_8\leq C2^{2j\beta}\bar{h}(t)\|{\bf m}\|_{B^{1-\beta}_{2,\infty}}^2+ \epsilon\|\triangle_j\triangle {\bf m}\|_{2}^2+C\delta_{-1,j}\|\triangle_{-1}{\bf m}\|_2^2.
\eeno

$\bullet$ \underline{Estimate of $II_2$.} By Lemma \ref{lem:product2-1}, we have
\beno
II_2 \leq C2^{2j\beta}\bar{h}(t)\|{\bf m}\|_{B^{1-\beta}_{2,\infty}}^2+\epsilon\sum_{|j'-j|\le 4}\|\Delta_{j'}\triangle{\bf m}\|_{2}^2+ \epsilon\|\triangle_j\triangle {\bf m}\|_{2}^2+C\delta_{-1,j}\|\triangle_{-1}{\bf m}\|_2^2.
\eeno
Collecting the above estimates, by choosing a smaller $\epsilon$ than $\alpha$ or $\lambda$, one can complete the roof of Proposition \ref{prop:s m-high}.

Compared with $W_j(t)$, only the term $\|\triangle_{-1} {\bf m}\|_{2}^2$ is not estimated in Proposition \ref{prop:s m-high}
Now we estimate the evolution of it.
\end{proof}

\begin{proof}[{Proof of Proposition \ref{prop:s m-low}}] By direct computation, we have
\begin{equation}\label{eq:difference of m'}
\begin{split}
&\frac{(1+\alpha^2)}{2}\partial_t\|\triangle_{-1} {\bf m}\|_{2}^2+\alpha\|\nabla \triangle_{-1}{\bf m}\|_{2}^2\\
=&\alpha\langle\triangle_{-1}(|\nabla
{\bf m}_1|^2{\bf m}),\triangle_{-1}{\bf m}\rangle+\alpha \langle\triangle_{-1}[((\nabla {\bf m}_1+\nabla {\bf m}_2):\nabla {\bf m}){\bf m}_2],\triangle_{-1} {\bf m}\rangle\\
&-\langle\triangle_{-1}({\bf m}_1\times\Delta {\bf m}),\triangle_{-1} {\bf m}\rangle-\langle\triangle_{-1}({\bf m}\times \Delta {\bf m}_2),\triangle_{-1}{\bf m}\rangle-\langle\triangle_{-1}({\bf m}_1\times{\bf s}),\triangle_{-1}{\bf m}\rangle\\
&-\langle\triangle_{-1}({\bf m}\times
{\bf s}_2),\triangle_{-1}{\bf m}\rangle-\alpha\langle\triangle_{-1}({\bf m}\times({\bf m}_1\times {\bf s}_1)),\triangle_{-1} {\bf m}\rangle\\
& -\alpha\langle\triangle_{-1}({\bf m}_2\times({\bf m}\times{\bf s}_1),\triangle_{-1} {\bf m}\rangle - \alpha \langle\triangle_{-1}({\bf m}_2\times({\bf m}_2\times {\bf s})), \triangle_{-1} m\rangle\\
=&II_1'+\cdots+II_9'
\end{split}
\end{equation}

It's sufficient to consider the term $II_3'$, while other terms are handled similarly to those in Proposition \ref{prop:s m-high}.

$\bullet$ \underline{Estimate of $II_3'$.} Obviously,
\begin{equation}
\begin{split}
II_3'=&\langle\triangle_{-1}({\bf m}_1\times\nabla_i {\bf m}),\triangle_{-1}\nabla_i {\bf m}\rangle+\langle\triangle_{-1}(\nabla_i {\bf m}_1\times\nabla_i {\bf m}),\triangle_{-1} {\bf m}\rangle\\
=&\langle\triangle_{-1}({\bf m}_1\times\nabla_i {\bf m}),\triangle_{-1}\nabla_i {\bf m}\rangle-\langle\triangle_{-1}(\triangle {\bf m}_1\times{\bf m}),\triangle_{-1} {\bf m}\rangle\\
&-\langle\triangle_{-1}(\nabla_i {\bf m}_1\times {\bf m}),\triangle_{-1} \nabla_i {\bf m}\rangle\\
=&III_1+\cdots+III_3
\end{split}
\end{equation}

For the first term $III_1$, by (1) of Corollary \ref{cor:product2-1}, we have
\beno
III_1\leq C\|\nabla {\bf m}\|_{B^{-\beta}_{2,\infty}}^2+\epsilon\|\triangle_{-1} \nabla {\bf m}\|_{2}^2.
\eeno

By Lemma \ref{lem:product3-1}, for the second and third term we have
\beno
III_2+III_3 \leq C\bar{h}(t)\|{\bf m}\|_{B^{1-\beta}_{2,\infty}}^2+\epsilon\|\triangle_{-1}{\bf m}\|_{2}^2+\epsilon\|\triangle_{-1} \nabla {\bf m}\|_{2}^2.
\eeno
Hence the proof is complete.
\end{proof}

\section{Local well-posedness}
\setcounter{equation}{0}
In this subsection we will consider the local well-posedness of the spin polarized Landau-Lifshitz equation \eqref{equ1}.  For the Landau-Lifshitz equation, the local solvability in appropriate Sobolev spaces has been investigated by authors in \cite{DW01,KLPS10,Melcher12}. The local well-posedness can be obtained via the method of mollification \cite{Taylor91,Taylor97}. Let us fix the magnetization at infinity ${\bf a}\in\Bbb S^2$ and set
$$H^{\sigma}(\Bbb R^3;\Bbb S^2)=\{{\bf m}:\Bbb R^3\to\Bbb S^2:{\bf m}-{\bf a}\in H^{\sigma}(\Bbb R^3;\Bbb R^3)\},$$
where $H^{\sigma}(\Bbb R^3)=(I-\Delta)^{-\sigma/2}L^2(\Bbb R^3)$ is the usual Sobolev space. For the initial data, we assume that $({\bf s}_0,{\bf m}_0)\in H^{\sigma-1}(\Bbb R^3;\Bbb R^3)\times H^{\sigma}(\Bbb R^3;\Bbb S^2)$. We have the following local well-posedness result stated in the general space dimension.
\begin{theorem}\label{thm2}
Let $\sigma>n/2+2$. There exists a time $T^*>0$ and a unique solution $({\bf s},{\bf m})$ such that
$${\bf s}\in C^0([0,T];H^{\sigma-1}(\Bbb R^3;\Bbb R^3))\cap C^1([0,T];H^{\sigma-3}(\Bbb R^3;\Bbb R^3))$$
and
$${\bf m}\in C^0([0,T];H^{\sigma}(\Bbb R^3;\Bbb S^2))\cap C^1([0,T];H^{\sigma-2}(\Bbb R^3;\Bbb S^2))$$
for all $T<T^*$ with $({\bf s}(0),{\bf m}(0))=({\bf s}_0,{\bf m}_0)$. Moreover, if $T^*<\infty$, then
$$\limsup_{t\to T^*}\int_0^t\|({\bf s}(s),\nabla{\bf m}(s))\|_{L^{\infty}}ds=\infty.$$
\end{theorem}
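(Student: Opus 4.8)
The plan is to treat \eqref{equ1} as a coupled quasilinear \emph{parabolic} system and to construct solutions by Taylor's method of mollification. The $\bf m$-equation \eqref{equ6} is parabolic with leading part $\alpha\Delta{\bf m}$, while the $\bf s$-equation \eqref{equ9} is parabolic with the $\bf m$-dependent coercive principal part $\text{div}({\bf A}({\bf m})\nabla{\bf s})$. First I would fix a Friedrichs mollifier $J_\epsilon$ and replace \eqref{equ9}, \eqref{equ6} by the regularized system in which every nonlinear factor and every derivative is smoothed by $J_\epsilon$; the right-hand side is then a locally Lipschitz vector field on the Banach space $H^{\sigma-1}(\R^n;\R^3)\times H^{\sigma}_{\bf a}(\R^n;\R^3)$, so the Cauchy--Lipschitz theorem in a Banach space yields a unique maximal solution $({\bf s}^\epsilon,{\bf m}^\epsilon)$. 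The regularity gap --- $\bf s$ one order below $\bf m$ --- is dictated by the structure: $\bf s$ enters \eqref{equ6} only at order zero, through ${\bf m}\times{\bf s}$ and ${\bf m}\times({\bf m}\times{\bf s})$, whereas $\bf m$ enters \eqref{equ9} through the principal coefficient ${\bf A}({\bf m})$, so ${\bf m}\in H^\sigma$ is exactly what is needed to handle $\text{div}({\bf A}({\bf m})\nabla{\bf s})$ with ${\bf s}\in H^{\sigma-1}$. The hypothesis $\sigma>n/2+2$ guarantees $H^{\sigma}\hookrightarrow C^2$ and $H^{\sigma-1}\hookrightarrow C^1$, so $H^{\sigma-1}$ is an algebra and a multiplier on $H^\sigma$, making all the products below admissible.

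The heart of the argument is a uniform (in $\epsilon$) energy estimate for $E(t)=\|{\bf s}^\epsilon\|_{H^{\sigma-1}}^2+\|{\bf m}^\epsilon-{\bf a}\|_{H^\sigma}^2$. Applying $\partial^\gamma$ with $|\gamma|\le\sigma-1$ to the $\bf s$-equation and $|\gamma|\le\sigma$ to the $\bf m$-equation and pairing in $L^2$, I would produce the dissipative terms $(1-\beta)\|{\bf s}^\epsilon\|_{H^\sigma}^2$ and $\alpha\|{\bf m}^\epsilon\|_{H^{\sigma+1}}^2$ out of \eqref{equ1.4} and the Gilbert damping. The genuinely delicate contribution is the Schr\"odinger-type term ${\bf m}\times\Delta{\bf m}$ in \eqref{equ6}: at top order its pairing $\langle{\bf m}\times\Delta\partial^\gamma{\bf m},\partial^\gamma{\bf m}\rangle$ is formally of order $2\sigma+2$, but integrating by parts once and using $({\bf a}\times{\bf b})\cdot{\bf b}=0$ kills the apparently uncontrolled piece $\langle{\bf m}\times\partial_k\partial^\gamma{\bf m},\partial_k\partial^\gamma{\bf m}\rangle=0$, leaving only $\langle\partial_k{\bf m}\times\partial_k\partial^\gamma{\bf m},\partial^\gamma{\bf m}\rangle$, which is bounded by $\|\nabla{\bf m}\|_{L^\infty}\|{\bf m}\|_{H^{\sigma+1}}\|{\bf m}\|_{H^\sigma}$ and absorbed into the dissipation by Young's inequality. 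The remaining nonlinearities ($|\nabla{\bf m}|^2{\bf m}$, the commutator of $\partial^\gamma$ with ${\bf A}({\bf m})$, and the $\bf s$-coupling) are handled by Kato--Ponce/Moser product and commutator estimates, yielding $\frac{d}{dt}E+c(\|{\bf s}^\epsilon\|_{H^\sigma}^2+\|{\bf m}^\epsilon\|_{H^{\sigma+1}}^2)\le C\,P(E)$ for a continuous $P$, uniformly in $\epsilon$; this produces a common existence time $T^*=T^*(E(0))>0$ and a uniform bound on $[0,T^*)$.

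With the uniform bounds in hand, the equations bound $\partial_t{\bf s}^\epsilon$ in $H^{\sigma-3}$ and $\partial_t{\bf m}^\epsilon$ in $H^{\sigma-2}$; the Aubin--Lions lemma then supplies strong convergence of a subsequence in $L^2_{loc}$-based spaces, and passing to the limit identifies $({\bf s},{\bf m})$ as a solution in $L^\infty([0,T];H^{\sigma-1}\times H^\sigma)$. Continuity into the top space --- the stated $C^0([0,T];H^{\sigma-1})\times C^0([0,T];H^\sigma)$ --- I would obtain by a Bona--Smith type argument (weak continuity together with continuity of the norm via the energy identity), and the $C^1$ statements then follow by reading $\partial_t{\bf s}$ and $\partial_t{\bf m}$ off \eqref{equ9}, \eqref{equ6}. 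Uniqueness I would prove not at top order (where ${\bf A}({\bf m})$ costs a derivative) but for the difference of two solutions in the low energy norm $L^2\times L^2$, the parabolic terms supplying control of the gradients: writing the equations for ${\bf s}_1-{\bf s}_2$ and ${\bf m}_1-{\bf m}_2$ as in \eqref{eq:difference of s}--\eqref{eq:difference of m}, the coefficient difference ${\bf A}({\bf m}_1)-{\bf A}({\bf m}_2)$ is controlled by $\|{\bf m}_1-{\bf m}_2\|_{L^2}$ together with the $L^\infty$-bounds on $\nabla{\bf s}_2$ and $\nabla^2{\bf m}_2$ coming from $\sigma>n/2+2$, and Gronwall closes the estimate.

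Finally, for the continuation criterion I would rerun the top-order estimate keeping explicit track of the $L^\infty$-norms, arranging the product and commutator estimates so that every top-order term carries a factor of $\|{\bf s}\|_{L^\infty}$ or $\|\nabla{\bf m}\|_{L^\infty}$; this leads to a differential inequality of the form $\frac{d}{dt}E\le C\big(1+\|({\bf s},\nabla{\bf m})\|_{L^\infty}\big)E$, whence $E(t)\le E(0)\exp\!\big(C\int_0^t(1+\|({\bf s},\nabla{\bf m})\|_{L^\infty})\,ds\big)$; if $\int_0^{T^*}\|({\bf s},\nabla{\bf m})\|_{L^\infty}\,ds<\infty$ then $E$ stays finite up to $T^*$ and the solution continues past it, contradicting maximality. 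I expect the main obstacle to be precisely the top-order $\bf m$-estimate: keeping the Schr\"odinger cross term ${\bf m}\times\Delta{\bf m}$ under control while simultaneously closing the coupled estimate across the $H^{\sigma-1}$/$H^{\sigma}$ regularity gap, and, for the sharp criterion, replacing the naive $\|\nabla{\bf m}\|_{L^\infty}^2$ by the first power $\|\nabla{\bf m}\|_{L^\infty}$ via a more careful logarithmic-type interpolation.
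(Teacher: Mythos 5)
Your proposal takes essentially the same route as the paper: local existence by Taylor-type mollification of the divergence-form quasilinear parabolic system, higher-order energy estimates in $H^{\sigma-1}\times H^{\sigma}$ that kill the top-order Schr\"odinger term through the cross-product antisymmetry (the paper does this equivalently by writing ${\bf m}\times\Delta{\bf m}=\nabla\cdot({\bf m}\times\nabla{\bf m})$ and estimating the commutator $[\partial^{\alpha},{\bf m}\times]\nabla{\bf m}$ with the Moser/Kato--Ponce bounds of Lemma~\ref{lem1.1}), an $L^2$-level stability estimate for uniqueness, and the continuation criterion extracted from the Gronwall factor. The one substantive wrinkle --- that the natural energy estimate produces the Gronwall factor $\int_0^t\bigl(1+\|({\bf s},\nabla{\bf m})\|_{L^{\infty}}^{2}\bigr)d\tau$ with the \emph{square} rather than the first power appearing in the stated blow-up criterion --- is a point you correctly flag as the main obstacle; the paper's own Lemma~\ref{lem4.1} carries exactly this squared factor and passes over the discrepancy silently, so your proposal is no less complete than the published argument on this point.
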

Indeed, when the initial data is smooth, the solution $({\bf s,m})$ is in fact a classical solution and
$$({\bf s,m})\in C^0((0,T^*);H^{\infty}(\Bbb R^3;\Bbb R^3)\times H^{\infty}(\Bbb R^3;\Bbb S^2)),$$
where $H^{\infty}=\bigcup_{\sigma\in\Bbb Z} H^{\sigma}$.

The following inequalities will be used in the sequel(see \cite{Che} for example).

\begin{lemma}\label{lem1.1}
Let $\alpha$, $\beta$ and $\gamma$ be multi-indices, there holds that
$$\|\partial^{\alpha}(fg)\|_{L^2}\leq C\sum_{|\gamma|=|\alpha|}\left(\|f\|_{L^{\infty}}\|\partial^{\gamma}g\|_{L^2}+\|g\|_{L^{\infty}}\|\partial^{\gamma}f\|_{L^2}\right),$$
and
$$\|[\partial^{\alpha},f]\partial^{\beta}g\|_{L^2}\leq C\left(\sum_{|\gamma|=|\alpha|+|\beta|}\|\partial^{\gamma}f\|_{L^{2}}\|g\|_{L^{\infty}}+\sum_{|\gamma|=|\alpha|+|\beta|-1}\|\nabla f\|_{L^{\infty}}\|\partial^{\gamma}g\|_{L^{2}}\right),$$
for all $f,g\in C_0^{\infty}(\Bbb R^n)$.
\end{lemma}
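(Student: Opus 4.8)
The plan is to derive both inequalities from the Leibniz rule combined with the Gagliardo–Nirenberg interpolation inequality, Hölder, and Young's inequality; these are the classical Moser product and Kato–Ponce commutator estimates. Throughout, $\alpha,\beta,\gamma$ denote multi-indices, and I write $\nabla^k$ for the collection of all order-$k$ derivatives, using freely that $\|\nabla^k f\|_{L^2}\le C\sum_{|\gamma|=k}\|\partial^\gamma f\|_{L^2}$ and conversely, so that the final bounds match the stated sums.

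For the product estimate I would first expand
$$\partial^{\alpha}(fg)=\sum_{\gamma\le\alpha}\binom{\alpha}{\gamma}\partial^{\gamma}f\,\partial^{\alpha-\gamma}g,$$
and bound each term. Set $k=|\alpha|$, $a=|\gamma|$, $b=|\alpha-\gamma|=k-a$. I apply Gagliardo–Nirenberg in the form $\|\partial^{\gamma}f\|_{L^{p}}\le C\|f\|_{L^\infty}^{1-a/k}\|\nabla^{k}f\|_{L^2}^{a/k}$ with $p=2k/a$, and $\|\partial^{\alpha-\gamma}g\|_{L^{q}}\le C\|g\|_{L^\infty}^{1-b/k}\|\nabla^{k}g\|_{L^2}^{b/k}$ with $q=2k/b$. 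The essential bookkeeping is that $\tfrac1p+\tfrac1q=\tfrac{a+b}{2k}=\tfrac12$, so Hölder applies, and the interpolation weights are conjugate since $b/k=1-a/k$. Writing $\theta=a/k$, Hölder yields
$$\|\partial^{\gamma}f\,\partial^{\alpha-\gamma}g\|_{L^2}\le C\bigl(\|\nabla^{k}f\|_{L^2}\|g\|_{L^\infty}\bigr)^{\theta}\bigl(\|f\|_{L^\infty}\|\nabla^{k}g\|_{L^2}\bigr)^{1-\theta},$$
and Young's inequality $X^{\theta}Y^{1-\theta}\le X+Y$ collapses this to $\|\nabla^{k}f\|_{L^2}\|g\|_{L^\infty}+\|f\|_{L^\infty}\|\nabla^{k}g\|_{L^2}$. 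Summing over the finitely many $\gamma\le\alpha$ gives the first inequality.

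For the commutator estimate the same scheme works, the crucial point being the cancellation of the top-order term. Writing $[\partial^{\alpha},f]\partial^{\beta}g=\partial^{\alpha}(f\partial^{\beta}g)-f\partial^{\alpha+\beta}g$ and expanding the first factor by Leibniz, the $\gamma=0$ contribution $f\partial^{\alpha+\beta}g$ exactly cancels the second term, leaving
$$[\partial^{\alpha},f]\partial^{\beta}g=\sum_{0<\gamma\le\alpha}\binom{\alpha}{\gamma}\partial^{\gamma}f\,\partial^{\alpha+\beta-\gamma}g.$$
Now set $k=|\alpha|+|\beta|$, $a=|\gamma|\ge1$, $b=|\alpha+\beta-\gamma|=k-a\le k-1$. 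Because at least one derivative always lands on $f$, I interpolate $\partial^{\gamma}f$ between $\nabla f\in L^\infty$ and $\nabla^{k}f\in L^2$, and $\partial^{\alpha+\beta-\gamma}g$ between $g\in L^\infty$ and $\nabla^{k-1}g\in L^2$. Computing the Gagliardo–Nirenberg exponents gives $p=2(k-1)/(a-1)$ and $q=2(k-1)/(k-a)$ (with $p=\infty$ in the trivial case $a=1$), interpolation weights $\theta=(a-1)/(k-1)$ for $f$ and $1-\theta$ for $g$, and once more $\tfrac1p+\tfrac1q=\tfrac12$. Proceeding exactly as before with Hölder and Young bounds each term by $\|\nabla^{k}f\|_{L^2}\|g\|_{L^\infty}+\|\nabla f\|_{L^\infty}\|\nabla^{k-1}g\|_{L^2}$, and summation yields the second inequality.

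The only genuine subtlety, which I regard as the main obstacle, is the exponent bookkeeping: one must verify that the Gagliardo–Nirenberg interpolation is admissible (the weights lie in $[0,1]$ and the scaling identity fixes $p,q$ with $\tfrac1p+\tfrac1q=\tfrac12$) and that the two interpolation weights are conjugate, so that Young's inequality produces exactly the two stated terms rather than a larger family. This conjugacy is precisely what forces one factor to carry all the $L^2$ derivatives while the other is measured in $L^\infty$. Since $f,g\in C_0^{\infty}(\R^n)$ every norm appearing is finite and the interpolation inequalities apply verbatim, so no approximation argument is needed. I expect the endpoint indices ($a=1$ in the commutator, and $a=0$ or $a=k$ in the product) to be the only cases requiring a separate, but immediate, verbal check.
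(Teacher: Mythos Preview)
Your argument is correct: this is exactly the classical Moser product estimate and the Kato--Ponce commutator estimate, and the bookkeeping you outline (Leibniz expansion, Gagliardo--Nirenberg with exponents $p=2k/a$, $q=2k/b$ in the product case and $p=2(k-1)/(a-1)$, $q=2(k-1)/(k-a)$ in the commutator case, H\"older to recombine, and Young to split the geometric mean) goes through without difficulty. The endpoint cases you flag are indeed trivial.

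As for comparison with the paper: there is nothing to compare against. The paper does not supply a proof of this lemma at all; it merely states the inequalities and refers the reader to Chemin's monograph \cite{Che}. So you have actually filled in what the paper leaves as a citation, and your route is the standard one that any reference would give.
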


\begin{proof}[Proof of Theorem \ref{thm2}]
To prove this local result, we first note that since ${\bf m}\in\Bbb S^2$ is on the unit sphere, we have the following identities
$${\bf m}\times\Delta{\bf m}=\nabla\cdot({\bf m}\times\nabla{\bf m})\ \ \ \text{and \ \ } -{\bf m}\times({\bf m}\times\Delta{\bf m})=\Delta{\bf m}+|\nabla{\bf m}|^2{\bf m}.$$
Therefore, the system \eqref{equ1} is a quasilinear parabolic system in divergence form and the ${\bf m}$-part can be rewritten in terms of ${\bf u}={\bf m}-{\bf a}$ as
$$(1+\alpha^2)\partial_t{{\bf u}}=\nabla\cdot(B({\bf u})\nabla {\bf u})+C({\bf u},{\bf s},\nabla{\bf u}),$$
where $\langle{\bm\xi},B({\bf u}){\bm\xi}\rangle=\alpha|{\bf\xi}|^2$ for every ${\bf u}\in \Bbb R^3$ and ${\bf\xi}\in\Bbb R^{3\times3}$. Therefore, together with the equation satisfied by ${\bf s}$, the system \eqref{equ1} can be written in divergence form
$$\partial_t{\bf U}=\nabla\cdot({\bf\mathcal A(U)}\nabla {\bf U})+\bf\mathcal C({\bf U},\nabla {\bf U})$$
for ${\bf U}=(\bf s,m)$, whose local well-posedness can be obtained via the modified Galerkin's method as in \cite{Taylor91,Taylor97}. For this purpose, we need the following higher order energy estimates as well the stability estimates.

\begin{lemma}\label{lem4.1}
Let $\sigma>n/2+2$ and $({\bf s},{\bf m})$ be a smooth solution to the system \eqref{equ1} over $[0,T]$, then
\begin{equation}
\|(\nabla {\bf m}(T),{\bf s}(T))\|^2_{H^{\sigma-1}}+\frac{\alpha}{2}\int_0^T\|(\nabla {\bf m}(T),{\bf s}(T))\|^2_{H^{\sigma}}d\tau\leq e^{C(T)}\|(\nabla {\bf m}(0),{\bf s}(0))\|^2_{H^{\sigma-1}},
\end{equation}
where, for a universal constant $c>0$ that only depends on $\alpha$, $\sigma>n/2+2$,
$$C(t)=c(\alpha,\sigma)\int_0^t\Big(1+\|({\bf s},\nabla{\bf m})(\tau)\|^2_{L^{\infty}}\Big)d\tau.$$
\end{lemma}
\begin{proof}
First, we consider the $L^2$ estimates for ${\bf m-a}$. Directly use the equation to obtain
\begin{equation}
\begin{split}
\frac12\frac{d}{dt}\|{\bf m}-{\bf a}\|^2_{L^2}= & \langle\partial_t{\bf m},{\bf m-a}\rangle\\
\leq & C(1+\|\nabla{\bf m}\|_{L^{\infty}})\|\nabla{\bf m}\|_{H^1}\|{\bf m-a}\|_{L^2}+C\|{\bf m-a}\|_{L^2}\|{\bf s}\|_{L^2}\\
\leq & C(1+\|\nabla{\bf m}\|^2_{L^{\infty}})\|\nabla{\bf m}\|^2_{H^1}+C\|{\bf m-a}\|^2_{L^2}+C\|{\bf s}\|^2_{L^2}.
\end{split}
\end{equation}
Let $\alpha$ be a multiindex and $1\leq |\alpha|\leq \sigma$. We have
$$\partial^{\alpha}({\bf m}\times \nabla{\bf m})={\bf m}\times \partial^{\alpha}\nabla{\bf m}+[\partial^{\alpha},{\bf m}\times]\nabla{\bf m}$$
where $[\cdot,\cdot]$ is the commutator and the last term bounded by
$$\|[\partial^{\alpha},{\bf m}\times]\nabla{\bf m}\|_{L^2}\leq C\|\nabla{\bf m}\|_{L^{\infty}}\|\nabla{\bf m}\|_{H^{\sigma-1}},$$
where we have used the inequalities in Lemma \ref{lem1.1}. Moreover, we have
\begin{equation}
\begin{split}
\|\partial^{\alpha}(|\nabla{\bf m}|^2{\bf m})\|_{L^2}\leq C\|\nabla{\bf m}\|_{L^{\infty}}\|\nabla{\bf m}\|_{H^{\sigma}}+C\|\nabla{\bf m}\|^2_{L^{\infty}}\|\nabla{\bf m}\|_{H^{\sigma-1}}
\end{split}
\end{equation}
and
\begin{equation}
\begin{split}
\|\partial^{\alpha}({\bf m}\times{\bf s})\|_{L^2}&+\|\partial^{\alpha}({\bf m}\times({\bf m}\times{\bf s}))\|_{L^2}\\
\leq & C\Big(\|\nabla{\bf s}\|_{H^{\sigma-1}}+\|{\bf s}\|_{L^{\infty}}\|\nabla{\bf m}\|_{H^{\sigma-1}}+\|\nabla{\bf m}\|_{L^{\infty}}\|{\bf s}\|_{H^{\sigma-1}}\Big).
\end{split}
\end{equation}
Applying $\partial^{\alpha}$ to the ${\bf m}$-part of system \eqref{equ1} and taking inner product with $\partial^{\alpha}{\bf m}$ in $L^2$, we obtain by integration by parts that
\begin{equation}
\begin{split}
\frac{d}{dt}\|\partial^{\alpha}{\bf m}\|^2_{L^2}&+\|\partial^{\alpha}\nabla{\bf m}\|^2_{L^2} \leq C(1+\|\nabla{\bf m}\|^2_{L^{\infty}})\|\nabla{\bf m}\|^2_{H^{\sigma-1}}\\
&+C\|\nabla{\bf m}\|_{L^{\infty}}\|\nabla{\bf m}\|_{H^{\sigma-1}}\|\nabla{\bf m}\|_{H^{\sigma}} +C\|\nabla{\bf m}\|_{H^{\sigma-1}}\|\nabla{\bf s}\|_{H^{\sigma-1}}\\
&+C\|\nabla{\bf m}\|_{H^{\sigma-1}}(\|{\bf s}\|_{L^{\infty}}\|\nabla{\bf m}\|_{H^{\sigma-1}} +\|\nabla{\bf m}\|_{L^{\infty}}\|{\bf s}\|_{H^{\sigma-1}}).
\end{split}
\end{equation}
Summing all possible $\alpha$ with $1\leq |\alpha|\leq \sigma$, we obtain, upon using $\varepsilon$-Young's inequality, that
\begin{equation}
\begin{split}
\frac{d}{dt}\|\nabla{\bf m}\|^2_{H^{\sigma-1}}&+ \|\nabla{\bf m}\|^2_{H^{\sigma}} \leq C(1+\|\nabla{\bf m}\|^2_{L^{\infty}}+\|{\bf s}\|_{L^{\infty}})\|\nabla{\bf m}\|^2_{H^{\sigma-1}}+\frac14\|{\bf s}\|^2_{H^{\sigma}}.
\end{split}
\end{equation}
Now, we consider the ${\bf s}$-part of system \eqref{equ1}. Now for $0\leq |\alpha|\leq \sigma-1$, we have
$$\partial^{\alpha}({\bf A(m)\nabla s})={\bf A(m)}\partial^{\alpha}{\nabla\bf s}+[\partial^{\alpha},{\bf A(m)}]\nabla{\bf s}$$
and using Lemma \ref{lem1.1} again we have for the commutator
$$\|[\partial^{\alpha},{\bf A(m)}]\nabla{\bf s}\|_{L^2}\leq C\|\nabla{\bf m}\|_{L^{\infty}}\|{\bf s}\|_{H^{\sigma-1}}+C\|{\bf s}\|_{L^{\infty}}\|\nabla{\bf m}\|_{H^{\sigma-1}}.$$
Similar estimates as for the ${\bf m}$-part yield the following
\begin{equation}
\begin{split}
\frac12\frac{d}{dt}\|{\bf s}\|^2_{H^{\sigma-1}}&+\|\nabla{\bf s}\|^2_{H^{
\sigma-1}}+\|{\bf s}\|^2_{H^{\sigma-1}}\\
\leq & C(1+\| {\bf s}\|^2_{L^{\infty}}+\|\nabla {\bf m}\|^2_{L^{\infty}})(\|\nabla{\bf m}\|^2_{H^{\sigma-1}}+\|{\bf s}\|^2_{H^{\sigma-1}})+\frac12\|\nabla{\bf s}\|^2_{H^{\sigma-1}},
\end{split}
\end{equation}
Therefore we obtain the following higher order estimates for $\sigma>n/2+2$,
\begin{equation}
\begin{split}
\frac{d}{dt}(\|{\bf s}\|^2_{H^{\sigma-1}}+\|\nabla{\bf m}\|^2_{H^{\sigma-1}})&+(\|{\bf s}\|^2_{H^{
\sigma}}+\|\nabla{\bf m}\|^2_{H^{\sigma}})\\
\leq & C(1+\|(\nabla {\bf m},{\bf s})\|^2_{L^{\infty}})(\|\nabla{\bf m}\|^2_{H^{\sigma-1}}+\|{\bf s}\|^2_{H^{\sigma-1}}).
\end{split}
\end{equation}
The proof of Lemma \ref{lem4.1} is complete.
\end{proof}
Now, we consider the stability in $L^2$. Let $({\bf s_1, m_1})$ and $({\bf s_2, m_2})$ be two solutions. After similar computation as above, on can obtain the following
$$\|({\bf s_1-s_2,m_1-m_2})(t)\|^2_{L^2}\leq Ce^{C(t)}\|({\bf s_1-s_2,m_1-m_2})(0)\|^2_{L^2},$$
where $C(t)$ depends on the solutions $({\bf s_1, m_1})$ and $({\bf s_2, m_2})$.

Using mollification $\mathcal J_{\varepsilon}$ of functions $v\in L^{p}(\Bbb R^n)$, $1\leq p\leq \infty$, defined by
$$(\mathcal J_{\varepsilon}v)(x)=\varepsilon^{-n}\int_{\Bbb R^n}\rho(\frac{x-y}{\varepsilon})v(y)dy,\ \ \ \ \ \ \varepsilon>0,$$
for a given radial function
$$\rho(|x|)\in C_0^{\infty}(\Bbb R^n),\ \ \ \ \ \rho>0,\ \ \ \ \int_{\Bbb R^n}\rho dx=1,$$
one can prove the local existence results in Theorem \ref{thm2}. The blow up criterion follows from the higher order energy estimates and uniqueness follows from stability estimates. The details are hence omitted here and one can find similar treatment in \cite{Melcher12,Taylor91,Taylor97} for Landau-Lifshitz equation or general parabolic equations, or our recent paper for a similar model in \cite{PWW17}. This completes the proof of Theorem \ref{thm2}.
\end{proof}

\bigskip
\noindent {\bf Acknowledgments.}
The first author X. Pu is supported by NSFC under grant 11471057.
The second author W. Wang is supported by NSFC under grant 11671067,
``the Fundamental Research Funds for the Central Universities" and China Scholarship Council.

\begin{center}

\end{center}

\end{document}